\documentclass{amsart}

\usepackage[final,ulem=normalem,ulem=normalbf]{changes}
\setauthormarkup{}

\usepackage[english]{babel}
\usepackage[utf8]{inputenc}

\usepackage{amsthm,amsfonts,amssymb,amsmath}
\usepackage{stmaryrd}
\numberwithin{equation}{section}
\usepackage{dsfont}

\usepackage{url}
\usepackage{algorithmic,algorithm}
\usepackage{graphics}
\usepackage{epsfig}
\usepackage{epstopdf}
\usepackage{psfrag}               
\usepackage{mathrsfs}
\usepackage{mathtools}            
\usepackage{subfigure}
\usepackage{longtable}	

\usepackage{paralist}

\usepackage[pdftex,
            pdftitle={Covariance structure of SPDEs with multiplicative Levy noise},
            pdfauthor={K. Kirchner and A. Lang and S. Larsson},
            bookmarksopen,
            colorlinks,
            linkcolor=black,
            urlcolor=black,
	    citecolor=black
]{hyperref}
\usepackage{hyperref}

\newcommand{\bbB}{\mathbb{B}}

\newcommand{\bbE}{\mathbb{E}}

\newcommand{\bbM}{\mathbb{M}}
\newcommand{\bbN}{\mathbb{N}}
\newcommand{\bbP}{\mathbb{P}}

\newcommand{\bbR}{\mathbb{R}}
\newcommand{\bbT}{\mathbb{T}}

\newcommand{\cA}{\mathcal{A}}
\newcommand{\cB}{\mathcal{B}}

\newcommand{\cD}{\mathcal{D}}

\newcommand{\cF}{\mathcal{F}}

\newcommand{\cH}{\mathcal{H}}
\newcommand{\cI}{\mathcal{I}}

\newcommand{\cL}{\mathcal{L}}

\newcommand{\cP}{\mathcal{P}}

\newcommand{\cW}{\mathcal{W}}
\newcommand{\cX}{\mathcal{X}}
\newcommand{\cY}{\mathcal{Y}}
\newcommand{\cZ}{\mathcal{Z}}

\newcommand{\scrB}{\mathscr{B}}


\newcommand{\Hr}{\dot{H}^{r}}
\newcommand{\lhs}{\cL_{2}}
\newcommand{\lnn}{\cL_{1}}
\newcommand{\lnp}{\cL_{1}^{+}}

\newcommand{\clp}{\cL_{p}}
\newcommand{\cov}{\operatorname{Cov}}
\newcommand{\tr}{\operatorname{tr}}
\newcommand{\rd}{\mathrm{d}}

\newcommand{\xH}{\mathbin{\hat{\otimes}}}
\newcommand{\xpi}{\mathbin{\hat{\otimes}_{\pi}}}
\newcommand{\xeps}{\mathbin{\hat{\otimes}_{\varepsilon}}}



\newcommand{\cBpi}[2]{\widetilde{\cB}^{(\pi)}( #1, #2)} 

\newcommand{\uip}[2]{\langle #1, #2\rangle_U}
\newcommand{\utip}[2]{\langle #1, #2\rangle_{U^{(2)}}}
\newcommand{\chip}[2]{\langle #1, #2\rangle_{\cH}}
\newcommand{\chnorm}[1]{\| #1 \|_{\cH}}

\newcommand{\hip}[2]{\langle #1, #2\rangle_H}
\newcommand{\htip}[2]{\langle #1, #2\rangle_{H^{(2)}}}
\newcommand{\lthip}[2]{\langle #1, #2\rangle_{L^2(\bbT; H)}}
\newcommand{\hnorm}[1]{\| #1 \|_{H}}

\newcommand{\vip}[2]{\langle #1, #2\rangle_{V}}

\newcommand{\vvdp}[2]{\,_{V^*}\langle #1, #2\rangle_{V}}
\newcommand{\vdpt}[2]{\,_V\langle #1, #2\rangle_{V^*}}

\newcommand{\vnorm}[1]{\| #1 \|_{V}}
\newcommand{\vdnorm}[1]{\| #1 \|_{V^{*}}}

\newcommand{\xnorm}[1]{\| #1 \|_{\cX}}

\newcommand{\ynorm}[1]{\| #1 \|_{\cY}}

\definecolor{green2}{RGB}{0,128,0}
\definecolor{red2}{RGB}{230,0,0}
\definecolor{grey}{RGB}{195,195,195}
\definecolor{orange}{RGB}{255,100,0}

\graphicspath{{Images/}}


\newtheorem{lemma}{Lemma}[section]
\newtheorem{proposition}[lemma]{Proposition}
\newtheorem{theorem}[lemma]{Theorem}

\theoremstyle{remark}
\newtheorem{remark}[lemma]{Remark}

\theoremstyle{definition}
\newtheorem{definition}[lemma]{Definition}
\newtheorem{assumption}[lemma]{Assumption}


\hbadness=10000
\vbadness=10000


\definecolor{darkgreen}{rgb}{0,.6,0}

\definecolor{darkred}{rgb}{.6,0,0}

\usepackage{cancel}
\usepackage{ulem}


\begin{document}

\title[Covariance structure of SPDEs with multiplicative L\'evy noise]%
	{Covariance structure of parabolic stochastic partial differential equations with multiplicative L\'evy noise}

\author{Kristin Kirchner}

\address{Kristin Kirchner\\
Department of Mathematical Sciences\\
Chalmers University of Technology and University of Gothenburg\\
SE-412 96 Gothenburg, Sweden}

\email{kristin.kirchner@chalmers.se}

\author{Annika Lang}

\address{Annika Lang\\
Department of Mathematical Sciences\\
Chalmers University of Technology and University of Gothenburg\\
SE-412 96 Gothenburg, Sweden}

\email{annika.lang@chalmers.se}

\author{Stig Larsson}

\address{Stig Larsson\\
Department of Mathematical Sciences\\
Chalmers University of Technology and University of Gothenburg\\
SE-412 96 Gothenburg, Sweden}

\email{stig@chalmers.se}

	
\thanks{
Acknowledgement. This work was supported in part by the Knut and Alice Wallenberg Foundation as well as the Swedish Research Council under Reg.~No.~621-2014-3995.
The authors thank Roman Andreev, Arnulf Jentzen, and Christoph Schwab for fruitful discussions and helpful comments.
}


\begin{abstract}
The characterization of the covariance function of the solution
process to a stochastic partial differential equation is considered in 
the parabolic case with multiplicative L\'evy noise of affine type.
For the second moment of the mild solution, a well-posed
deterministic space-time variational problem posed on projective and injective
tensor product spaces is derived, which subsequently leads to a deterministic equation
for the covariance function.
\end{abstract}

\keywords{Stochastic partial differential equations,
          Multiplicative L\'evy noise,
          Space-time variational problems on tensor product spaces,
          Projective and injective tensor product spaces}

\subjclass[2010]{60H15, 35K90, 35R60}

\date{\today}

\maketitle

\section{Introduction}\label{section:intro}

The covariance function of a stochastic process is an interesting quantity for
the following reasons: It provides information about the correlation of the process
with itself at pairs of time points.
In addition, it shows if this relation is stationary, i.e., whether or
not it changes when shifted in time, and if it follows a trend.
In~\cite{lang2013} the covariance of the solution process to a
parabolic stochastic partial differential equation driven by an
additive $Q$-Wiener process has been described as the solution to a
deterministic, tensorized evolution equation. In this case the
  solution process is also Gaussian with mean zero and therefore
  completely characterized by its covariance. It is now natural
  to ask whether it is possible to establish such an equation
also for covariance functions of solutions to stochastic partial
differential equations driven by multiplicative noise.

In the present paper, we extend the study of the
  covariance function to solution processes of parabolic stochastic
  partial differential equations driven by multiplicative L\'evy noise
  in the framework of~\cite{peszat2007}.  In this case the
  solution process is no longer fully characterized by the covariance,
  but the covariance function is still of interest as mentioned above.
  We emphasize that it is the extension to multiplicative noise which
  is the main motivation and challenge here; the extension to L\'evy
  noise is rather straightforward since the theory of the
  corresponding It\^o integral is more or less parallel to the Wiener
  case. The multiplicative operator is assumed to be affine.
Clearly, under appropriate assumptions on the driving L\'evy process,
the mean function of the mild solution satisfies the corresponding
deterministic, parabolic evolution equation as in the case of additive
Wiener noise, since in both cases the stochastic integral has
  expectation zero. However, the presence of a multiplicative term
changes the behavior of the second moment and the covariance.  We
prove that also in this case the second moment as well as the
covariance of the square-integrable mild solution satisfy
deterministic space-time variational problems posed on tensor products
of Bochner spaces.  In contrast to the case of additive Wiener noise
considered in~\cite{lang2013}, the trial and the test space are not
Hilbert tensor products. Instead we use different notions of tensor
product spaces to obtain well-posed variational problems. These tensor
product spaces are non-reflexive Banach spaces.  In addition, the
resulting bilinear form in the variational problem does not
arise from taking the tensor of the corresponding deterministic
parabolic operator with itself, but it involves a non-separable
operator mapping to the dual space of the test space.  For these
reasons, well-posedness of the derived deterministic variational
problems is not an immediate consequence, and operator theory
  on the tensor product spaces is used to derive it. We
  emphasize that, although the present manuscript is
  rather abstract, numerical methods based on this variational
  problem are currently under investigation. 

The structure of the present paper is as follows: In Section~\ref{section:spde} we
present the parabolic stochastic differential equation and its mild solution,
whose covariance function we aim to describe.
The results formulated in Section~\ref{section:aux} will be used
for proving the main results of this paper
in Sections~\ref{section:moment}--\ref{section:covariance}.
More precisely, in Subsections~\ref{subsec:tensor}--\ref{subsec:diagonal}
we present different notions of tensor product spaces
and several operators arising in the variational problems satisfied
by the second moment and the covariance of the mild solution. The weak
It\^o isometry, which we introduce in Subsection~\ref{subsec:weakito},
is crucial for the derivation of the deterministic variational problems.
Theorems~\ref{thm:deterministic} and~\ref{thm:wellposed} in Sections~\ref{section:moment}
and~\ref{section:exuni} provide the main results of this paper:
In Theorem~\ref{thm:deterministic} we show that
the second moment of the mild solution satisfies a deterministic space-time variational
problem posed on non-reflexive tensor product spaces. In order to be able
to formulate this variational problem, we need some additional regularity of the
second moment which we prove first.
The aim of Section~\ref{section:exuni} is to establish well-posedness of
the derived variational problem. Since the variational problem
is posed on non-reflexive Banach spaces, it is not possible to
apply standard inf-sup theory to achieve this goal.
Instead, we show that the operator associated with the bilinear
form appearing in the variational problem is bounded from below,
which implies uniqueness of the solution to the variational problem.
Finally, in Section~\ref{section:covariance} we use the results of
the previous sections to obtain a well-posed space-time variational
problem satisfied by the covariance function of the mild solution.

\section{The stochastic partial differential equation}\label{section:spde}

In this section the investigated stochastic partial differential equation as well as
the setting that we impose on it are presented. In addition, we formulate the definition
as well as existence, uniqueness, and regularity results of the so-called mild solution to
this equation in Definition~\ref{def:mildsol} and Theorem~\ref{thm:exunimild}.
Finally, in Lemma~\ref{lem:weakequal} we state a property of the mild solution
which will be essential for the derivation of the deterministic equation satisfied
by its second moment and its covariance function
in Sections~\ref{section:moment} and~\ref{section:covariance}, respectively.

For two Banach spaces $E_1$ and $E_2$ we denote by $\cL(E_1; E_2)$
the space of bounded linear operators mapping from $E_1$ to $E_2$.
In addition, we write $\clp(H_1; H_2)$ for the space of Schatten class
operators of $p$-th order
mapping from $H_1$ to $H_2$, where $H_1$ and $H_2$ are separable Hilbert spaces.
Here, for $1 \leq p < \infty$ an operator $T\in \cL(H_1;H_2)$
is called a \emph{Schatten-class operator of $p$-th order},
if $T$ has a finite $p$-th Schatten norm, i.e.,
\[
    \| T\|_{\clp(H_1;H_2)} := \left( \sum\limits_{n\in\bbN} s_n(T)^p \right)^{\frac{1}{p}} < +\infty,
\]
where $s_1(T) \geq s_2(T) \geq \ldots \geq s_n(T) \geq \ldots \geq 0$ are
the singular values of $T$, i.e., the eigenvalues of the operator $(T^* T)^{1/2}$
and $T^* \in \cL(H_2; H_1)$ denotes the adjoint of $T$.
If $H_1=H_2=H$ we abbreviate $\clp(H; H)$ by $\clp(H)$.
For the case $p=1$ and a separable Hilbert space $H$ with inner product $\hip{\cdot}{\cdot}$
and orthonormal basis $(e_n)_{n\in\bbN}$ we introduce the \emph{trace}
of an operator $T\in\lnn(H)$ by
\[
   \tr(T) := \sum\limits_{n\in\bbN} \hip{Te_n}{e_n}.
\]
The trace $\tr(T)$ is independent of the choice of the orthonormal basis
and it satisfies $|\tr(T)| \leq \|T\|_{\lnn(H)}$, cf.~\cite[Proposition~C.1]{daprato1992}.
By $\lnp(H)$ we denote the space of all nonnegative, symmetric trace class operators on $H$, i.e.,
\[
   \lnp(H) := \left\{ T \in \lnn(H) : \hip{T\varphi}{\varphi} \geq 0,
                   \hip{T\varphi}{\psi} = \hip{\varphi}{T\psi} \quad \forall \varphi,\psi \in H \right\}.
\]
For $p=2$, the norm $\| T\|_{\lhs(H_1;H_2)}$ coincides with the \emph{Hilbert--Schmidt} norm.

In the following $U$ and $H$ denote separable Hilbert spaces with
norms $\|\cdot\|_U$ and $\|\cdot\|_H$ induced by the inner products
$\uip{\cdot}{\cdot}$ and $\hip{\cdot}{\cdot}$, respectively.

Let $L := (L(t), t\geq 0)$ be an adapted, square-integrable, $U$-valued L\'evy process
defined on a complete filtered probability space $(\Omega, \cA, (\cF_t)_{t\geq 0}, \bbP)$.
More precisely, we assume that
\begin{enumerate}
  \item\label{def:levy-i}
    $L$ has independent increments, i.e., for all
    $0\leq t_0 < t_1 < \ldots < t_n$ the $U$-valued random variables
    $L(t_1)-L(t_0)$, $L(t_2)-L(t_1)$, $\ldots$, $L(t_n) - L(t_{n-1})$ are independent;
  \item\label{def:levy-ii}
    $L$ has stationary increments, i.e., the distribution of $L(t)-L(s)$,
    $s\leq t$, depends only on the difference $t-s$;
  \item\label{def:levy-iii}
    $L(0) = 0$ $\bbP$-almost surely;
  \item\label{def:levy-iv}
    $L$ is stochastically continuous, i.e.,
    \[
      \lim\limits_{\substack{s\to t \\ s\geq 0}} \bbP(\|L(t)-L(s)\|_{U} > \epsilon) = 0
      \quad \forall \epsilon > 0, \quad \forall t \geq 0;
    \]
  \item\label{def:levy-v}
    $L$ is adapted, i.e., $L(t)$ is $\cF_t$-measurable for all $t\geq 0$;
  \item\label{def:levy-vi}
    $L$ is square-integrable, i.e., $\bbE \left[\| L(t)\|_U^2 \right] < +\infty$ for all $t\geq 0$.
\end{enumerate}

Furthermore, we assume that for $t>s\geq 0$ the increment $L(t)-L(s)$ is independent of $\cF_s$
and that $L$ has zero mean and covariance operator $Q\in \lnp(U)$, i.e.,
for all $s,t\geq 0$ and $x, y \in U$ it holds: $\bbE \uip{L(t)}{x} = 0$ and
\begin{equation}\label{eq:levyQ}
\bbE \left[ \uip{L(s)}{x}\uip{L(t)}{y} \right] = \min\{s,t\} \, \uip{Q x}{y},
\end{equation}
cf.~\cite[Theorem~4.44]{peszat2007}. Note that under these assumptions, the L\'evy process
$L$ is a martingale with respect to the filtration $(\cF_t)_{t\geq 0}$ by~\cite[Proposition~3.25]{peszat2007}.

In addition, since $Q\in\lnp(U)$ is a nonnegative, symmetric trace class operator,
there exists an orthonormal
eigenbasis $(e_n)_{n\in\bbN} \subset U$ of $Q$
with corresponding eigenvalues $(\gamma_n)_{n\in\bbN} \subset \bbR_{\geq 0}$, i.e.,
$Qe_n = \gamma_n e_n$ for all $n\in\bbN$, and for $x\in U$ we may define
the fractional operator $Q^{1/2}$ by
\[
    Q^{\frac{1}{2}}  x := \sum\limits_{n\in\bbN} \gamma_n^{\frac{1}{2}} \uip{x}{e_n} \, e_n
\]
as well as its pseudo inverse $Q^{-1/2}$ by
\[
    Q^{-\frac{1}{2}} x := \sum\limits_{n\in\bbN \, : \, \gamma_n \neq 0} \gamma_n^{-\frac{1}{2}} \uip{x}{e_n} \, e_n.
\]
We introduce the vector space $\cH := Q^{1/2} U$.
Then $\cH$ is a Hilbert space with respect to the inner product
$\chip{\cdot}{\cdot} := \uip{Q^{-1/2} \cdot}{Q^{-1/2} \cdot}$.

Furthermore, let $A\colon \cD(A) \subset H \to H$ be a densely defined,
self-adjoint, positive definite linear operator,
which is not necessarily bounded, but which has a compact inverse.
In this case $-A$ is the generator of an analytic semigroup of contractions
$(S(t), t\geq 0)$
and for $r \geq 0$ the fractional power operator $A^{r/2}$
is well-defined on a domain $\cD(A^{r/2}) \subset H$,
cf.~\cite[Chapter~2]{pazy1983}. 
We define the Hilbert space $\Hr$ as the completion of $\cD(A^{r/2})$ equipped with the inner product
\[
    \langle \varphi, \psi\rangle_{\Hr} := \hip{ A^{r/2} \varphi}{A^{r/2} \psi}
\]
and obtain a scale of Hilbert spaces with $\dot{H}^s \subset \Hr \subset \dot{H}^0 = H$
for $0\leq r \leq s$. Its role is to measure spatial regularity.
We denote the special case when $r=1$ by $V:=\dot{H}^1$.
In this way we obtain a Gelfand triple
\[
    V \hookrightarrow H \cong H^* \hookrightarrow V^*,
\]
%
where we use $\,^*$ to denote the identification of the dual spaces of $H$ and $V$
with respect to the pivot space $H$. Later on, the notation $\,^\prime$ will be used when
addressing to the dual space in its classical sense, i.e., as the space of all linear continuous mappings to $\bbR$.
In addition, although the operator $A$ is assumed to be self-adjoint, we
denote by $A^*\colon V\to V^*$ its adjoint for clarification whenever we consider
the adjoint instead of the operator itself.
With these definitions, the operator $A$ and its adjoint are bounded,
i.e.,~$A, A^* \in \cL(V; V^*)$, since for $\varphi, \psi \in V$ it holds
\[
    \vvdp{A \varphi}{\psi} = \hip{ A^{1/2} \varphi}{A^{1/2} \psi} = \vip{\varphi}{\psi} = \vdpt{\varphi}{A^* \psi},
\]
where $\vvdp{\cdot}{\cdot}$ and $\vdpt{\cdot}{\cdot}$ denote dual pairings between $V$ and $V^*$.

We consider the stochastic partial differential equation

\begin{equation}\label{eq:spdeall}
\begin{split}
\rd X(t) + A X(t) \, \rd t &= G(X(t)) \, \rd L(t), \quad t \in \bbT := [0,T], \\
X(0) &= X_0,
\end{split}
\end{equation}
for finite~$T>0$. In order to obtain existence and uniqueness of a solution to this problem as well as
additional regularity for its second moment, which will be needed later on,
we impose the following assumptions on the initial value~$X_0$ and the operator~$G$.
\begin{assumption}\label{ass:spde}
The initial value~$X_0$ and the operator~$G$ in~\eqref{eq:spdeall} satisfy:
\begin{enumerate}
\item \label{ass:spde-i} $X_0$ is a square-integrable, $H$-valued random variable, i.e., $X_0 \in L^2(\Omega; H)$, which is $\cF_0$-measurable.
\item \label{ass:spde-ii} $G\colon H \to \lhs(\cH; H)$ is an affine operator, i.e.,
$G(\varphi) = G_1(\varphi) + G_2$ with operators $G_1 \in \cL(H, \lhs(\cH; H))$
and $G_2 \in \lhs(\cH; H)$.
\item \label{ass:spde-iii} There exists a regularity exponent $r\in [0,1]$ such that
$X_0 \in L^2(\Omega; \Hr)$ and $A^{r/2} S(\cdot) G_1 \in L^2(\bbT; \cL(\Hr; \lhs(\cH; H)))$,
i.e.,
\[
     \int_0^T \| A^{\frac{r}{2}} S(t) G_1 \|_{\cL(\Hr; \lhs(\cH; H))}^2 \, \rd t < +\infty.
\]
\item \label{ass:spde-iv} $A^{1/2} S(\cdot) G_1 \in L^2(\bbT; \cL(\Hr; \lhs(\cH; H)))$, i.e.,
\[
     \int_0^T \| A^{\frac{1}{2}} S(t) G_1 \|_{\cL(\Hr; \lhs(\cH; H))}^2 \, \rd t < +\infty,
\]
with the same value for $r\in[0,1]$ as in~\ref{ass:spde-iii}.
\item \label{ass:spde-v} $G_1 \in \cL(V, \cL(U; H))$ and $G_2 \in \cL(U; H)$.
\end{enumerate}
\end{assumption}

Note that the assumption on $G_1$ in part~\ref{ass:spde-iv} implies the one in part~\ref{ass:spde-iii}.
Conditions~\ref{ass:spde-i}--\ref{ass:spde-iii} guarantee $\dot{H}^r$ regularity of the mild solution
(cf.~Theorem~\ref{thm:exunimild}),
but we need all five assumptions for our main results
in Sections~\ref{section:moment} and~\ref{section:covariance}.

Before we derive the deterministic variational problems satisfied by
the second moment and the covariance of the solution $X$ to~\eqref{eq:spdeall}
in Sections~\ref{section:moment} and~\ref{section:covariance},
we have to specify which kind of solvability we consider.
In addition, existence and uniqueness of this solution must be guaranteed.

\begin{definition}\label{def:mildsol}
    A predictable process $X\colon\Omega \times \bbT \to H$ is called a mild solution to~\eqref{eq:spdeall},
    if $\sup\nolimits_{t\in\bbT} \| X(t)\|_{L^2(\Omega; H)} < +\infty$ and
    \begin{equation}\label{eq:mildsol}
        X(t) = S(t) X_0 + \int_0^t S(t-s) G(X(s)) \, \rd L(s), \quad t \in \bbT.
    \end{equation}
\end{definition}

It is a well-known result that there exists a unique mild solution
to equations driven by affine multiplicative noise as considered above.
More precisely, we have the following theorem.

\begin{theorem}\label{thm:exunimild} 
    Under Assumption~\ref{ass:spde}~\ref{ass:spde-i}--\ref{ass:spde-ii} there exists (up to modification)
    a unique mild solution $X$ of~\eqref{eq:spdeall}.
    If additionally Condition~\ref{ass:spde-iii} of Assumption~\ref{ass:spde} holds,
    then the mild solution satisfies
    \[
        \sup\limits_{t\in\bbT} \| X(t)\|_{L^2(\Omega; \Hr)} < +\infty,
    \]
    i.e., $X \in L^{\infty}\bigl( \bbT; L^2(\Omega; \Hr) \bigr)$.
    \end{theorem}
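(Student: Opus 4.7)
The plan is standard: I would obtain existence and uniqueness of the mild solution via a Banach fixed-point argument in $\cZ := L^{\infty}(\bbT; L^2(\Omega; H))$ equipped with a suitably weighted supremum norm, and then extract the $\Hr$-regularity by reapplying the L\'evy--It\^o isometry after composing with $A^{r/2}$ and using Gronwall's inequality.

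First, I would introduce the map $\Phi\colon \cZ \to \cZ$ defined by
\[
    (\Phi X)(t) := S(t) X_0 + \int_0^t S(t-s) G(X(s)) \, \rd L(s), \quad t \in \bbT,
\]
whose fixed points coincide with mild solutions in the sense of Definition~\ref{def:mildsol}. Using the L\'evy--It\^o isometry
\[
    \bbE \left\| \int_0^t \Psi(s) \, \rd L(s) \right\|_H^2
      = \int_0^t \bbE \| \Psi(s) \|_{\lhs(\cH; H)}^2 \, \rd s
\]
applied to $\Psi(s) := S(t-s) G(X(s))$, together with the affine growth bound $\| G(\varphi) \|_{\lhs(\cH; H)} \leq \|G_1\|_{\cL(H, \lhs(\cH; H))} \|\varphi\|_H + \|G_2\|_{\lhs(\cH; H)}$ from Assumption~\ref{ass:spde}~\ref{ass:spde-ii} and contractivity of $(S(t))_{t\geq 0}$ on $H$, one verifies that $\Phi$ maps $\cZ$ into itself. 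Next, I would show that $\Phi$ is a strict contraction with respect to the equivalent weighted norm $\|X\|_{\beta} := \sup_{t\in\bbT} e^{-\beta t} \|X(t)\|_{L^2(\Omega; H)}$ for $\beta > 0$ large enough: applying the same isometry to $\Phi(X) - \Phi(Y)$ yields
\[
    \bbE \|(\Phi X - \Phi Y)(t)\|_H^2
      \leq \|G_1\|_{\cL(H, \lhs(\cH; H))}^2 \int_0^t \bbE \|X(s) - Y(s)\|_H^2 \, \rd s,
\]
so multiplying by $e^{-2\beta t}$ and taking the supremum in $t$ gives a factor of order $1/\beta$. The Banach fixed-point theorem then produces a unique fixed point in $\cZ$, and predictability of $X$ follows from the fact that each Picard iterate is predictable and $L^2$-convergence preserves this property.

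For the regularity statement, assuming~\ref{ass:spde-iii}, I would apply $A^{r/2}$ to both sides of~\eqref{eq:mildsol} and estimate in $L^2(\Omega; H)$. The deterministic term contributes $\|A^{r/2} S(t) X_0\|_{L^2(\Omega; H)} \leq \|X_0\|_{L^2(\Omega; \Hr)}$ by contractivity of $S$ on $\Hr$. For the stochastic term, splitting $G = G_1(\cdot) + G_2$ and applying the It\^o isometry, the $G_1$-contribution is bounded by
\[
    \int_0^t \|A^{r/2} S(t-s) G_1\|_{\cL(\Hr; \lhs(\cH; H))}^2 \, \bbE \|X(s)\|_{\Hr}^2 \, \rd s,
\]
where the operator-norm factor is in $L^1(\bbT)$ by Assumption~\ref{ass:spde}~\ref{ass:spde-iii}, while the $G_2$-contribution reduces to $\int_0^t \|A^{r/2} S(t-s) G_2\|_{\lhs(\cH; H)}^2 \, \rd s$, which is finite by analyticity of $(S(t))_{t \geq 0}$ and the smoothing estimate $\|A^{r/2} S(\tau)\|_{\cL(H)} \leq C \tau^{-r/2}$. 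A standard Gronwall argument then yields the claimed uniform bound $\sup_{t\in\bbT} \|X(t)\|_{L^2(\Omega; \Hr)}^2 < +\infty$.

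The step I expect to be most delicate is the careful verification that the L\'evy--It\^o isometry applies to the Hilbert-space-valued integrand $S(t-s) G(X(s)) \in \lhs(\cH; H)$: one must check predictability together with the required square-integrability uniformly in~$t$, both of which ultimately reduce to the a priori estimates established in the fixed-point step. Since all of this is essentially covered in~\cite[Chapter~9]{peszat2007}, I would simply invoke those results rather than repeat the construction in detail.
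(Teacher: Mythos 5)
Your first step (existence and uniqueness in $L^{\infty}(\bbT; L^2(\Omega;H))$ via a Picard/weighted-norm fixed point) is fine; the paper simply delegates this to Theorem~9.29 of Peszat--Zabczyk. The regularity step is where your route diverges from the paper's, and where it has a genuine gap. The paper does \emph{not} prove an a priori estimate on the already-constructed solution; it runs a second contraction argument directly in $\cZ = L^{\infty}\bigl(\widetilde{\bbT}; L^2(\Omega;\Hr)\bigr)$ on a short interval $\bigl[0,\widetilde{T}\bigr]$ chosen so that $\int_0^{\widetilde{T}}\|A^{r/2}S(\tau)G_1\|^2_{\cL(\Hr;\lhs(\cH;H))}\,\rd\tau<1$, and then iterates over finitely many subintervals. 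That design is exactly what avoids the two problems in your argument. First, to ``apply $A^{r/2}$ to both sides and estimate'' you must already know that $\phi(t):=\bbE\|X(t)\|^2_{\Hr}$ is finite (at least locally bounded) before any Gronwall-type inequality can be invoked; the solution produced by your first step lives only in $L^2(\Omega;H)$, so $\phi$ is not known to be finite and Gronwall yields nothing. Second, even granting finiteness, the convolution kernel $k(\tau)=\|A^{r/2}S(\tau)G_1\|^2_{\cL(\Hr;\lhs(\cH;H))}$ is only in $L^1(\bbT)$ under Assumption~\ref{ass:spde}~\ref{ass:spde-iii}, so the ``standard'' Gronwall lemma does not apply; you would need the generalized Volterra/resolvent-kernel version, or the paper's smallness-of-interval trick, which is the elementary substitute for it. Both defects are repairable (e.g.\ by proving the bound inductively for the Picard iterates, which do lie in $L^\infty(\bbT;L^2(\Omega;\Hr))$, on the short interval), but as written the step fails.

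A smaller point: your bound for the $G_2$ contribution via the smoothing estimate $\|A^{r/2}S(\tau)\|_{\cL(H)}\leq C\tau^{-r/2}$ produces $\int_0^t\tau^{-r}\,\rd\tau$, which diverges at the admissible endpoint $r=1$. The correct tool is the self-adjointness of $A$, which gives $\int_0^T\|A^{1/2}S(t)\varphi\|_H^2\,\rd t\leq\tfrac12\|\varphi\|_H^2$ (the paper's inequality~\eqref{eq:half}) and hence $\int_0^t\|A^{r/2}S(t-s)G_2\|^2_{\lhs(\cH;H)}\,\rd s<+\infty$ for all $r\in[0,1]$ by summing over an orthonormal basis of $\cH$. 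Note also that, whichever construction you use, this bound on the $G_2$ term is needed to check that the fixed-point map actually maps the $\Hr$-valued space into itself, not merely that it is a contraction there.
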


\begin{proof}
    The first part of the theorem follows from~\cite[Theorem~9.29]{peszat2007}.
    Suppose now that condition~\ref{ass:spde-iii} is satisfied.
    By the dominated convergence theorem the sequence of integrals
    \[
        \int_0^{T} \| A^{\frac{r}{2}} S(\tau) G_1 \|_{\cL(\Hr; \lhs(\cH; H))}^2 \,
        \mathds{1}_{(0,T/n)}(\tau) \, \rd \tau ,
    \]
    where $n\in\bbN$ and $\mathds{1}_{(0,T/n)}$ denotes the indicator function on the interval $(0,T/n)$,
    converges to zero as $n\to\infty$. Therefore, there exists $\widetilde{T} \in (0,T]$ such that
    \[
        \kappa^2 := \int_0^{\widetilde{T}} \| A^{\frac{r}{2}} S(\tau) G_1 \|_{\cL(\Hr; \lhs(\cH; H))}^2 \, \rd \tau < 1.
    \]
    Define $\widetilde{\bbT} := \bigl[0,\widetilde{T} \bigr]$,
    $\cZ := L^{\infty}\bigl( \widetilde{\bbT}; L^2(\Omega; \Hr) \bigr)$ and
    \[
        \Upsilon \colon \cZ \to \cZ, \quad \Upsilon(Z)(t) := S(t)X_0 + \int_0^t S(t-s) G(Z(s)) \, \rd L(s), \quad t \in \widetilde{\bbT} .
    \]
    Then $\Upsilon$ is a contraction: For every $t \in \widetilde{\bbT}$ and $Z_1, Z_2 \in \cZ$ we have
    \begin{align*}
    \| \Upsilon(Z_1)(t) &- \Upsilon(Z_2)(t) \|_{L^2(\Omega; \Hr)}^2
         = \bbE \Bigl\| \int_0^t S(t-s) G_1(Z_1(s) - Z_2(s)) \, \rd L(s) \Bigr\|_{\Hr}^2 \\
        &= \bbE \Bigl\| \int_0^t A^{\frac{r}{2}} S(t-s) G_1(Z_1(s) - Z_2(s)) \, \rd L(s) \Bigr\|_H^2, \\
    \intertext{since $A$ and, hence, $A^{r/2}$ are closed operators.
    Now the application of It\^o's isometry for the case of a L\'evy process, cf.~\cite[Corollary~8.17]{peszat2007}, yields }
        &= \bbE \int_0^t \| A^{\frac{r}{2}} S(t-s) G_1(Z_1(s) - Z_2(s)) \|_{\lhs(\cH; H)}^2 \, \rd s \\
        &\leq \bbE \int_0^t \| A^{\frac{r}{2}} S(t-s) G_1 \|_{\cL(\Hr; \lhs(\cH; H))}^2
                                \| Z_1(s) - Z_2(s)\|_{\Hr}^2 \, \rd s \\
        &=\int_0^t  \| A^{\frac{r}{2}} S(t-s) G_1 \|_{\cL(\Hr; \lhs(\cH; H))}^2 \bbE\left[  \| Z_1(s) - Z_2(s)\|_{\Hr}^2 \right] \, \rd s, 
        \end{align*}
    where the interchanging of the expectation and the time integral is justified by Tonelli's theorem. Therefore, we obtain the estimate
    \[
        \| \Upsilon(Z_1)(t) - \Upsilon(Z_2)(t) \|_{L^2(\Omega; \Hr)}^2
            \leq \kappa^2 \sup\limits_{s\in\widetilde{\bbT}} \bbE \| Z_1(s) - Z_2(s)\|_{\Hr}^2
    \]
    for all $t\in\bbT$
    and $\| \Upsilon(Z_1) - \Upsilon(Z_2) \|_{\cZ} \leq \kappa \|Z_1 - Z_2\|_{\cZ}$,
    which shows that $\Upsilon$ is a contraction.
    By the Banach fixed point theorem,
    there exists a unique fixed point $X_{*}$ of $\Upsilon$ in $\cZ$.
    Hence, $X = X_{*}$ is the unique mild solution to~\eqref{eq:spdeall} on $\widetilde{\bbT}$ and
    \[
        \|X\|_{\cZ}^2 = \sup\limits_{t\in\widetilde{\bbT}} \bbE \| X(t)\|_{\Hr}^2 < +\infty.
    \]
    The claim of the theorem follows from iterating the same argument on the intervals
    \[
        \bigl[(m-1)\widetilde{T}, \, \min\{m\widetilde{T}, T\} \bigr],
        \quad m \in \bigl\{ 1,2, \ldots, \bigl\lceil T/\widetilde{T} \bigr\rceil \bigr\}. \qedhere
    \]
\end{proof}

Lemma~\ref{lem:weakequal} relates the concepts of weak and mild solutions
of stochastic partial differential equations, cf.~\cite[Section~9.3]{peszat2007},
and provides the basis for establishing the connection between
the second moment of the mild solution and a space-time variational problem.
In order to state it, we first have to define the differential operator $\partial_t$
and the weak stochastic integral.
For a vector-valued function $u\colon\bbT\to~H$ taking values in a Hilbert space $H$
we define the distributional derivative $\partial_t u$ as the $H$-valued distribution satisfying
\[
    \hip{(\partial_t u)(w)}{\varphi} = -\int_0^T \frac{\rd w}{\rd t} (t) \hip{u(t)}{\varphi} \, \rd t 
\]
for all $\varphi \in H$ and $w \in C_0^{\infty}(\bbT; \bbR)$, cf.~\cite[Definition~3 in~§XVIII.1]{dautray1999}.

In the following, we consider the spaces $L^2(\Omega\times\bbT; \lhs(\cH;H))$
as well as
$L^2(\Omega\times\bbT; \cL(U;H))$
of square-integrable functions taking values in $\lhs(\cH;H)$ and $\cL(U;H)$, respectively,
with respect to the measure space
$(\Omega\times\bbT, \cP_\bbT, \bbP\otimes\lambda)$, where $\cP_\bbT$ denotes the
$\sigma$-algebra of predictable subsets of $\Omega\times\bbT$
and $\lambda$ the Lebesgue measure on $\bbT$.
For a predictable process $\Phi \in L^2(\Omega\times\bbT; \lhs(\cH;H))$ and
a continuous $H$-valued function $v\in C^0(\bbT;H)$
we define
the stochastic process $\Psi \in L^2(\Omega \times \bbT; \lhs(\cH; \bbR))$ by
\[
    \Psi(t) \colon z \mapsto \hip{v(t)}{\Phi(t) z} \quad \forall z\in \cH,
\]
for all $t\in\bbT$.
The predictability of $\Psi$ follows from the continuity of $v$ on $\bbT$
and the predictability of $\Phi$. 

The weak stochastic integral $\int\nolimits_0^T \hip{v(t)}{\Phi(t) \, \rd L(t)}$
is then defined as the stochastic integral with respect to the integrand $\Psi$, i.e.,
\begin{align}\label{eq:weakstochint}
    \int_0^T \hip{v(t)}{\Phi(t) \, \rd L(t)} := \int_0^T  \Psi(t) \, \rd L(t)
    \quad \bbP\text{-a.s.},
\end{align}
cf.~\cite[p.~151]{peszat2007}.
Its properties imply by~\cite[Equation~(9.20)]{peszat2007} the following lemma.

\begin{lemma}\label{lem:weakequal}
    Let Assumption~\ref{ass:spde}~\ref{ass:spde-i}--\ref{ass:spde-ii}
    be satisfied and let
    $X$ be the mild solution to~\eqref{eq:spdeall}.
    Then it holds $\bbP$-almost surely that
    \begin{equation*}
        \lthip{X}{(-\partial_t + A^*) v} = \hip{X_0}{v(0)} + \int_0^T \hip{v(t)}{G(X(t)) \, \rd L(t)}
    \end{equation*}
    for all $v \in C^1_{0,\{T\}}(\bbT; \cD(A^*)) := \{ w \in C^1(\bbT, \cD(A^*)) : w(T)=0\}$.
\end{lemma}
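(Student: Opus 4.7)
The plan is to substitute the mild representation of $X$ into the left-hand side and reduce the resulting two contributions to the right-hand side by integration by parts in time together with a stochastic Fubini interchange. This is essentially~\cite[Eq.~(9.20)]{peszat2007} restated in the present notation, so I would verify how each piece matches.

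First, by linearity of $\lthip{\cdot}{\cdot}$, the left-hand side splits into a deterministic piece $I_{\text{det}} := \lthip{S(\cdot)X_0}{(-\partial_t + A^*)v}$ and a stochastic piece
\[
I_{\text{stoch}} := \int_0^T \hip{\int_0^t S(t-s) G(X(s))\,\rd L(s)}{(-\partial_t+A^*)v(t)}\,\rd t.
\]
I would treat them separately.

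For $I_{\text{det}}$, since $v(\cdot)$ takes values in $\cD(A^*)$ and $A$ is self-adjoint, one first checks on $\cD(A)$ that $\hip{S(t)X_0}{A^{*}v(t)} = \hip{A S(t)X_0}{v(t)}$, which extends to general $X_0 \in H$ by analyticity of the semigroup and a dominated-convergence argument. It follows that the distributional identity
\[
\tfrac{\rd}{\rd t}\hip{S(t)X_0}{v(t)} = -\hip{S(t)X_0}{A^{*}v(t)} + \hip{S(t)X_0}{\partial_t v(t)}
\]
holds on $(0,T)$, and integrating from $0$ to $T$ with $v(T)=0$ and $S(0)=I$ produces $I_{\text{det}} = \hip{X_0}{v(0)}$, the first summand on the right-hand side.

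For $I_{\text{stoch}}$, I would apply the stochastic Fubini theorem (cf.~\cite[Section~8.3]{peszat2007}), whose predictability and integrability hypotheses follow from Assumption~\ref{ass:spde}\ref{ass:spde-i}--\ref{ass:spde-ii} together with the bound $\sup_{t\in\bbT}\|X(t)\|_{L^2(\Omega;H)} < +\infty$ granted by Definition~\ref{def:mildsol}, to interchange the $\rd t$ and $\rd L(s)$ integrals. Using the dual pairing inside the weak stochastic integral~\eqref{eq:weakstochint}, the inner deterministic integral becomes
\[
\int_s^T S(t-s)^{*}\bigl(-\partial_t + A^{*}\bigr)v(t)\,\rd t.
\]
Since $\tfrac{\rd}{\rd t}\bigl[S(t-s)^{*}v(t)\bigr] = -A^{*}S(t-s)^{*}v(t) + S(t-s)^{*}\partial_t v(t)$ on $\cD(A^{*})$, this integral telescopes to $v(s)$ after invoking $v(T)=0$ and $S(0)^{*}=I$. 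Consequently $I_{\text{stoch}} = \int_0^T \hip{v(s)}{G(X(s))\,\rd L(s)}$, matching the second summand on the right-hand side.

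The main technical obstacle is the rigorous justification of the stochastic Fubini interchange and the commutation of $A^{*}$ (respectively $S(t-s)^{*}$) with the stochastic integral via the weak-integral definition~\eqref{eq:weakstochint}; once these are in place, everything reduces to the deterministic telescoping calculation above. Since the required ingredients are supplied by~\cite[Section~9.3]{peszat2007} under hypotheses satisfied by our mild solution, the lemma follows as a direct consequence of the general mild-to-weak conversion~\cite[Eq.~(9.20)]{peszat2007}.
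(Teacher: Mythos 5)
Your proposal is correct and follows the same route as the paper, which simply invokes the mild-to-weak conversion of \cite[Equation~(9.20)]{peszat2007}; your decomposition into a deterministic integration-by-parts term and a stochastic-Fubini term is precisely the standard argument behind that citation. The telescoping identity $v(s) = \int_s^T S(t-s)^{*}(-\partial_t + A^{*})v(t)\,\rd t$ and the resulting identification of both summands check out.
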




\section{Auxiliary results}\label{section:aux}

The aim of this section is to prove some auxiliary results that
will be needed later on to derive the main results
in Sections~\ref{section:moment},~\ref{section:exuni}, and~\ref{section:covariance}.

In Subsection~\ref{subsec:tensor} we introduce
different notions of tensor product spaces and some of their properties.
The deterministic equations satisfied by
the second moment and the covariance will be posed on these kinds of spaces.

Next, in Subsection~\ref{subsec:covkern}, we use these tensor product spaces
to define the covariance kernel associated with the driving L\'evy
process $L$ and derive some additional results for the interaction
of this covariance kernel with the operators $G_1$ and $G_2$, see
Lemmas~\ref{lem:ggupi}~and~\ref{lem:ggqvv}.

In order to formulate our main results in Sections~\ref{section:moment}--\ref{section:covariance}
in a compact way, we introduce two operators in Subsection~\ref{subsec:diagonal}.
These operators appear in the deterministic equations
in Sections~\ref{section:moment}~and~\ref{section:covariance}
and the results of this subsection provide the basis for
proving their well-posedness in Section~\ref{section:exuni}.

Finally, Subsection~\ref{subsec:weakito} is devoted to
an It\^o isometry for the weak stochastic integral driven
by a L\'evy process $L$.

\subsection{Tensor product spaces}\label{subsec:tensor}

Before we formulate the first result, we have to introduce some definitions and notation:
For two Banach spaces $E_1$ and $E_2$ we denote the \emph{algebraic tensor product}, i.e.,
the tensor product of $E_1$ and $E_2$ as vector spaces, by $E_1 \otimes E_2$.
The algebraic tensor product $E_1 \otimes E_2$ consists of all finite sums of the form
\[
    \sum\limits_{k=1}^{N} \varphi_k \otimes \psi_k,
        \quad \varphi_k \in E_1, \, \psi_k\in E_2, \, k = 1,\ldots,N.
\]
There are several ways to define a norm on this space. Here we introduce three of them:
\begin{enumerate}
\item\label{def:projtensor} \emph{Projective tensor product:}
    By taking the completion of the algebraic tensor product $E_1 \otimes E_2$
    with respect to the so-called projective norm defined for $x \in E_1 \otimes E_2$ by
    \[
        \| x \|_{E_1 \xpi E_2} := \inf\left\{ \sum\limits_{k=1}^{N} \|\varphi_k\|_{E_1} \|\psi_k\|_{E_2} : x = \sum\limits_{k=1}^{N} \varphi_k \otimes \psi_k\right\},
    \]
    the projective tensor product space $E_1 \xpi E_2$ is obtained.
    We abbreviate $E^{(\pi)} := E\xpi E$, whenever $E_1=E_2=E$.
\item\label{def:injtensor} \emph{Injective tensor product:}
    The injective norm of an element $x$ in the algebraic tensor product space $E_1 \otimes E_2$ is defined as
    \[
        \| x \|_{E_1 \xeps E_2} :=
            \sup\left\{ \Bigl|\sum\limits_{k=1}^{N} f(\varphi_k) \, g(\psi_k)\Bigr|
            : f \in B_{E_1^\prime}, \, g \in B_{E_2^\prime} \right\},
    \]
    where $B_{E_1^\prime}$, $B_{E_2^\prime}$ denote the closed unit balls in the
    dual spaces $E_j^\prime := \cL(E_j; \bbR)$, $j=1,2$, and
    $\sum_{k=1}^{N} \varphi_k \otimes \psi_k$ is any representation of $x\in E_1 \otimes E_2$.
    Note that the value of the supremum is independent of the choice
    of the representation of $x$, cf.~\cite[p.~45]{ryan2002}.
    The completion of $E_1 \otimes E_2$ with respect to this norm is
    called injective tensor product space and denoted by $E_1 \xeps E_2$.
    If $E_1=E_2=E$, the abbreviation $E^{(\varepsilon)} := E\xeps E$ is used.
\item\label{def:hilberttensor} \emph{Hilbert space tensor product:}
    If $E_1$ and $E_2$ are Hilbert spaces with inner products
    $\langle\cdot,\cdot\rangle_{E_1}$ and $\langle\cdot,\cdot\rangle_{E_2}$,
    the tensor product $E_1 \xH E_2$ is defined
    as the completion of the algebraic tensor product $E_1 \otimes E_2$
    with respect to the norm induced by the inner product
    \[
        \langle x, y \rangle_{E_1 \xH E_2}
            := \sum\limits_{k=1}^{N} \sum\limits_{\ell=1}^{M}
                    \langle \varphi_k, \vartheta_\ell \rangle_{E_1} \langle \psi_k, \chi_\ell \rangle_{E_2},
    \]
    where $x = \sum\nolimits_{k=1}^{N} \varphi_k \otimes \psi_k$ and
    $y = \sum\nolimits_{\ell=1}^{M} \vartheta_\ell \otimes \chi_\ell$
    are representations of $x$,~$y \in E_1 \otimes E_2$.
    For $E_1=E_2=E$, set $E^{(2)} := E\xH E$.
\end{enumerate}

In the latter case, we obtain again a Hilbert space,
whereas the vector spaces in~\ref{def:projtensor} and~\ref{def:injtensor} are Banach spaces,
which are in general \emph{not reflexive}, cf.~\cite[Theorem~4.21]{ryan2002}.
The following lemma is an immediate consequence of the definitions above.

\begin{lemma}\label{lem:tensor} 
    For Banach spaces $E_1$, $E_2$, $F_1$, $F_2$
    and Hilbert spaces $H_1$, $H_2$, $U_1$, $U_2$
    the following hold:
    \begin{enumerate}
    \item\label{lem:tensor-i}
        For bounded linear operators $S\in\cL(E_1; F_1)$ and $T\in\cL(E_2; F_2)$
        there exists a unique bounded linear operator
        $S\xpi T : E_1 \xpi E_2 \to F_1 \xpi F_2$
        such that $(S\xpi T)(x \otimes y) = (Sx) \otimes (Ty)$ for every $x\in E_1$, $y\in E_2$
        and it holds
        \[
            \|S\xpi T\|_{\cL(E_1 \xpi E_2; F_1 \xpi F_2)} = \|S\|_{\cL(E_1; F_1)} \|T\|_{\cL(E_2; F_2)}.
        \]
    \item\label{lem:tensor-ii}
        For bounded linear operators $S\in\cL(H_1; U_1)$ and $T\in\cL(H_2; U_2)$
        there exists a unique bounded linear operator
        $S\xH T : H_1 \xH H_2 \to U_1 \xH U_2$
        such that $(S\xH T)(x \otimes y) = (Sx) \otimes (Ty)$ for every $x\in H_1$, $y\in H_2$
        and it holds
        \[
            \|S\xH  T\|_{\cL(H_1 \xH  H_2; U_1 \xH  U_2)} = \|S\|_{\cL(H_1; U_1)} \|T\|_{\cL(H_2; U_2)}.
        \]
    \item\label{lem:tensor-iii} The following chain of continuous embeddings holds:
        \[
            H_1 \xpi H_2 \hookrightarrow H_1 \xH H_2 \hookrightarrow H_1 \xeps H_2,
        \]
        where all embedding constants are equal to 1.
    \end{enumerate} 
\end{lemma}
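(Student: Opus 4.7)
The plan is to construct each of the operators in (i) and (ii) and each of the embeddings in (iii) first on the algebraic tensor product $E_1 \otimes E_2$ (which is dense in all three completions considered), establish the required norm bound there, and then extend by uniform continuity. Well-definedness of the operator $\sum_k \varphi_k \otimes \psi_k \mapsto \sum_k (S\varphi_k) \otimes (T\psi_k)$ independently of the representation follows in both (i) and (ii) from the universal property of the algebraic tensor product applied to the bilinear map $(\varphi,\psi) \mapsto (S\varphi)\otimes (T\psi)$.

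For (i), given any representation $x = \sum_{k=1}^N \varphi_k \otimes \psi_k$, the subadditivity of the projective norm together with $\|S\varphi_k\|_{F_1} \|T\psi_k\|_{F_2} \le \|S\| \|T\| \|\varphi_k\|_{E_1} \|\psi_k\|_{E_2}$ yields
\[
\|(S\otimes T) x\|_{F_1 \xpi F_2} \le \|S\|\,\|T\| \sum_{k=1}^N \|\varphi_k\|_{E_1} \|\psi_k\|_{E_2};
\]
taking the infimum over representations and extending by density produces a unique bounded operator $S \xpi T$ of norm at most $\|S\|\|T\|$. The reverse inequality uses the elementary-tensor identity $\|\varphi \otimes \psi\|_{E_1 \xpi E_2} = \|\varphi\|_{E_1} \|\psi\|_{E_2}$ (a standard consequence of Hahn--Banach) together with sequences of unit vectors approximating $\|S\|$ and $\|T\|$. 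Part (ii) is analogous; the cleanest route to the estimate $\|S \xH T\| \le \|S\|\|T\|$ is via the factorization $S\otimes T = (S \otimes I)(I \otimes T)$ and an ONB expansion: for an orthonormal basis $(f_j)$ of $H_2$ every $z \in H_1 \xH H_2$ can be written $z = \sum_j x_j \otimes f_j$ with $\|z\|^2 = \sum_j \|x_j\|_{H_1}^2$, whence $\|(S\otimes I) z\|^2 = \sum_j \|S x_j\|^2 \le \|S\|^2 \|z\|^2$; combining with the analogous estimate for $I \otimes T$, and again pairing with elementary tensors for the lower bound, yields the claimed equality.

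For (iii), both embeddings are first established on the algebraic tensor product. The projective-to-Hilbert embedding follows from the triangle inequality: for any representation $x = \sum_k \varphi_k \otimes \psi_k$,
\[
\|x\|_{H_1 \xH H_2} \le \sum_k \|\varphi_k \otimes \psi_k\|_{H_1 \xH H_2} = \sum_k \|\varphi_k\|_{H_1} \|\psi_k\|_{H_2},
\]
and taking the infimum gives $\|x\|_{H_1 \xH H_2} \le \|x\|_{H_1 \xpi H_2}$. For the Hilbert-to-injective embedding, I would identify functionals $f \in B_{H_1'}$, $g \in B_{H_2'}$ via the Riesz map with vectors $a \in H_1$, $b \in H_2$ of norm at most one, so that independently of the representation of $x$,
\[
\sum_{k=1}^N f(\varphi_k)\, g(\psi_k) = \langle x,\, a \otimes b \rangle_{H_1 \xH H_2}.
\]
The Cauchy--Schwarz inequality together with $\|a \otimes b\|_{H_1 \xH H_2} = \|a\|_{H_1} \|b\|_{H_2} \le 1$ bounds the left-hand side by $\|x\|_{H_1 \xH H_2}$; taking the supremum over $f$ and $g$ and extending by density yields $\|x\|_{H_1 \xeps H_2} \le \|x\|_{H_1 \xH H_2}$. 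The only genuine subtlety is the elementary-tensor identity for the projective norm used to obtain the reverse inequalities in (i) and (ii); everything else reduces to the triangle inequality, Cauchy--Schwarz, and a density argument.
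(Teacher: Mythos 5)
Your proposal is correct, and parts (i), (ii), and the first embedding of (iii) follow essentially the same route as the paper: the paper cites Ryan for (i) (whose proof is the subadditivity-plus-infimum argument you give), proves (ii) via the same factorization $S\otimes T=(I\otimes T)(S\otimes I)$ with an orthonormal expansion, and obtains $H_1\xpi H_2\hookrightarrow H_1\xH H_2$ by the same triangle-inequality estimate. Where you genuinely diverge is the second embedding $H_1\xH H_2\hookrightarrow H_1\xeps H_2$: you observe that after the Riesz identification $f\leftrightarrow a$, $g\leftrightarrow b$ one has $\sum_k f(\varphi_k)\,g(\psi_k)=\langle x, a\otimes b\rangle_{H_1\xH H_2}$, so Cauchy--Schwarz together with $\|a\otimes b\|_{H_1\xH H_2}=\|a\|_{H_1}\|b\|_{H_2}\le 1$ finishes the estimate in one line. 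The paper instead rewrites $|\sum_k f(\varphi_k)g(\psi_k)|^2$ as $\|\chi_f\|^2\|\chi_g\|^2\,\|(P_{\chi_f}\xH P_{\chi_g})x\|^2$ for the rank-one orthogonal projections onto $\operatorname{span}\{\chi_f\}$ and $\operatorname{span}\{\chi_g\}$ and then invokes part (ii) to bound the projected element; this is a longer computation that buys nothing extra here, though it does illustrate the operator-norm identity of (ii) in action. Your version is the more economical argument and is fully rigorous; the only ingredient you flag as nontrivial, the identity $\|\varphi\otimes\psi\|_{E_1\xpi E_2}=\|\varphi\|_{E_1}\|\psi\|_{E_2}$ needed for the lower bounds in (i) and (ii), is indeed the standard Hahn--Banach consequence (and in the Hilbert case of (ii) it is immediate from the inner product), so no gap remains.
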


\begin{proof}
    For~\ref{lem:tensor-i} see~\cite[Proposition 2.3]{ryan2002}.

    To see that $S\otimes T$ is a bounded mapping with respect to the
    Hilbert tensor products in~\ref{lem:tensor-ii},
    one may proceed as
    in~\cite[Section I.2.3]{dixmier1981} -- there for the case
    $H_1 = U_1$ and $H_2 = U_2$. We may write $S\otimes T$ as
    $S \otimes T = (I_{U_1} \otimes T)(S \otimes I_{H_2})$ and for
    $x\in H_1 \otimes H_2$ we can choose a representation
    $\sum\nolimits_{k=1}^{N} \varphi_k \otimes \psi_k$ of $x$,
    such that the vectors $\{\psi_k\}_{k=1}^N$ are orthonormal in $H_2$.
    Then we obtain
    \begin{align*}
        \| (S \otimes I_{H_2}) x \|_{U_1\xH H_2}^2
             &= \Bigl\| \sum\limits_{k=1}^{N} S \varphi_k \otimes \psi_k \Bigr\|_{U_1\xH H_2}^2
              = \sum\limits_{k=1}^{N} \|S \varphi_k\|_{U_1}^2 \\
             &\leq \|S \|_{\cL(H_1; U_1)}^2  \sum\limits_{k=1}^{N} \| \varphi_k\|_{H_1}^2
              = \|S \|_{\cL(H_1; U_1)}^2 \|x\|_{H_1\xH H_2}^2
    \end{align*}
    and, thus,
    \[
        \|(S \otimes I_{H_2}) x \|_{ U_1 \xH H_2} \leq \|S \|_{\cL(H_1; U_1)} \|x\|_{H_1 \xH H_2}
    \]
    for all $x \in H_1 \otimes H_2$.
    In the same way, one can prove that
    \[
        \|(I_{U_1} \otimes T) y\|_{U_1 \xH U_2} \leq \| T\|_{\cL(H_2; U_2)} \| y \|_{U_1 \xH H_2}
    \]
    for every $y\in U_1 \otimes H_2$ and conclude for $x \in H_1 \otimes H_2$
    \begin{align*}
        \| (S \otimes T)x \|_{U_1 \xH U_2}
            &\leq \| T\|_{\cL(H_2; U_2)} \| (S \otimes I_{H_2}) x \|_{U_1 \xH H_2} \\
            &\leq \| T\|_{\cL(H_2; U_2)} \| S\|_{\cL(H_1; U_1)}  \| x \|_{H_1 \xH H_2}.
    \end{align*}
    Hence, there exists a unique continuous extension $S\xH T \in \cL(H_1 \xH H_2; U_1 \xH U_2)$
    with $\|S\xH  T\|_{\cL(H_1 \xH  H_2; U_1 \xH  U_2)} = \|S\|_{\cL(H_1; U_1)} \|T\|_{\cL(H_2; U_2)}$.

    In order to prove \ref{lem:tensor-iii}, let $x\in H_1\otimes H_2$.
    Then, we estimate
    \[
        \|x\|_{H_1 \xH H_2} = \Bigl\|\sum\limits_{k=1}^{N} \varphi_k \otimes \psi_k \Bigr\|_{H_1 \xH H_2}
            \leq \sum\limits_{k=1}^{N} \left\| \varphi_k \otimes \psi_k \right\|_{H_1 \xH H_2}
             = \sum\limits_{k=1}^{N} \| \varphi_k\|_{H_1} \|\psi_k \|_{H_2}
    \]
    for any representation $\sum\nolimits_{k=1}^{N} \varphi_k \otimes \psi_k$ of $x$.
    This shows that $\|x\|_{H_1 \xH H_2} \leq \|x\|_{H_1 \xpi H_2}$ for all $x\in H_1 \otimes H_2$ and,
    thus, $H_1 \xpi H_2 \hookrightarrow H_1 \xH H_2$ with embedding constant 1.

    Furthermore, by the Riesz representation theorem,
    for $f\in B_{H_1^\prime}$ and $g\in B_{H_2^\prime}$ there exist
    $\chi_f \in B_{H_1}$ and $\chi_g \in B_{H_2}$ such that
    $\langle \chi_f, \varphi\rangle_{H_1} = f(\varphi)$,
    $\langle \chi_g, \psi\rangle_{H_2} = g(\psi)$ for all $\varphi\in H_1$, $\psi\in H_2$.
    This yields
    {\allowdisplaybreaks
    \begin{align*}
        \Bigl| \sum\limits_{k=1}^{N} f(\varphi_k) \, g(\psi_k) \Bigr|^2
            &= \Bigl| \sum\limits_{k=1}^{N} \langle \chi_f, \varphi_k\rangle_{H_1} \langle \chi_g, \psi_k\rangle_{H_2} \Bigr|^2 \\
            &= \sum\limits_{k=1}^{N} \sum\limits_{\ell=1}^{N} \langle \chi_f, \varphi_k\rangle_{H_1} \langle \chi_g, \psi_k\rangle_{H_2}
                                                        \langle \chi_f, \varphi_\ell\rangle_{H_1} \langle \chi_g, \psi_\ell\rangle_{H_2} \\
            &= \sum\limits_{k=1}^{N} \sum\limits_{\ell=1}^{N} \langle \langle \chi_f, \varphi_\ell\rangle_{H_1} \chi_f, \varphi_k\rangle_{H_1}
                                                            \langle \langle \chi_g, \psi_\ell\rangle_{H_2} \chi_g, \psi_k\rangle_{H_2} \\
            &= \sum\limits_{k=1}^{N} \sum\limits_{\ell=1}^{N} \langle P_{\chi_f} \varphi_\ell, \varphi_k\rangle_{H_1} \|\chi_f \|_{H_1}^2
                                                            \langle P_{\chi_g} \psi_\ell, \psi_k\rangle_{H_2} \|\chi_g\|_{H_2}^2,
    \end{align*}
    }
    where
    $P_{\chi_f}$ and $P_{\chi_g}$
    denote the orthogonal projections on the subspaces
    $\operatorname{span}\{\chi_f\} := \{\alpha\,\chi_f : \alpha\in\bbR\} \subset H_1$ and
    $\operatorname{span}\{\chi_g\} := \{\alpha\,\chi_g : \alpha\in\bbR\} \subset H_2$, i.e.,
    \[
       P_{\chi_f} \varphi := \frac{\langle \chi_f, \varphi\rangle_{H_1}}{\|\chi_f\|_{H_1}^2} \, \chi_f, \quad \varphi\in H_1, \qquad
       P_{\chi_g} \psi    := \frac{\langle \chi_g, \psi   \rangle_{H_2}}{\|\chi_g\|_{H_2}^2} \, \chi_g, \quad \psi\in H_2.
    \]
    By using the properties of orthogonal projections we estimate
    \begin{align*}
        \Bigl| \sum\limits_{k=1}^{N} & f(\varphi_k) \, g(\psi_k) \Bigr|^2
             = \|\chi_f \|_{H_1}^2 \|\chi_g\|_{H_2}^2
               \sum\limits_{k=1}^{N} \sum\limits_{\ell=1}^{N} \langle P_{\chi_f} \varphi_\ell, P_{\chi_f} \varphi_k\rangle_{H_1}
                                                            \langle P_{\chi_g} \psi_\ell, P_{\chi_g} \psi_k\rangle_{H_2} \\
            &= \|\chi_f \|_{H_1}^2 \|\chi_g\|_{H_2}^2
               \sum\limits_{k=1}^{N} \sum\limits_{\ell=1}^{N}
                            \langle P_{\chi_f} \varphi_\ell \otimes P_{\chi_g} \psi_\ell, P_{\chi_f} \varphi_k \otimes P_{\chi_g} \psi_k \rangle_{H_1 \xH H_2} \\
            &= \|\chi_f \|_{H_1}^2 \|\chi_g\|_{H_2}^2
               \Bigl\langle \sum\limits_{\ell=1}^{N} P_{\chi_f} \varphi_\ell \otimes P_{\chi_g} \psi_\ell,
                            \sum\limits_{k=1}^{N} P_{\chi_f} \varphi_k \otimes P_{\chi_g} \psi_k \Bigr\rangle_{H_1 \xH H_2} \\
            &= \|\chi_f \|_{H_1}^2 \|\chi_g\|_{H_2}^2
               \Bigl\| \sum\limits_{k=1}^{N} P_{\chi_f} \varphi_k \otimes P_{\chi_g} \psi_k \Bigr\|_{H_1 \xH H_2}^2 \\
            &= \|\chi_f \|_{H_1}^2 \|\chi_g\|_{H_2}^2
               \Bigl\| (P_{\chi_f} \xH P_{\chi_g}) \sum\limits_{k=1}^{N} \varphi_k \otimes \psi_k \Bigr\|_{H_1 \xH H_2}^2,
    \end{align*}
    where $P_{\chi_f} \xH P_{\chi_g}$ denotes the extension of $P_{\chi_f} \otimes P_{\chi_g}$ to $H_1 \xH H_2$,
    which has been introduced in Lemma~\ref{lem:tensor}~\ref{lem:tensor-ii}. This lemma and
    $\|\chi_f\|_{H_1} \leq 1$, $\|\chi_g\|_{H_2} \leq 1$ yield
    \begin{align*}
        \Bigl| \sum\limits_{k=1}^{N} f(\varphi_k) \, g(\psi_k) \Bigr|^2
            &\leq \| P_{\chi_f} \xH P_{\chi_g} \|_{\cL(H_1 \xH H_2; H_1 \xH H_2)}^2 \Bigl\| \sum\limits_{k=1}^{N} \varphi_k \otimes \psi_k \Bigr\|_{H_1 \xH H_2}^2 \\
            &= \| P_{\chi_f}\|_{\cL(H_1;H_1)}^2 \| P_{\chi_g} \|_{\cL(H_2; H_2)}^2 \| x \|_{H_1 \xH H_2}^2
             = \| x \|_{H_1 \xH H_2}^2
    \end{align*}
    for any representation $\sum\nolimits_{k=1}^{N} \varphi_k \otimes \psi_k$
    of $x \in H_1 \otimes H_2$.
    Since $f\in B_{H_1^\prime}$ and $g\in B_{H_2^\prime}$ were arbitrarily chosen we obtain
    \[
        \|x\|_{H_1 \xeps H_2}
                = \sup\left\{ \Bigl|\sum\limits_{k=1}^{N} f(\varphi_k) \, g(\psi_k)\Bigr|
                            : f \in B_{H_1^\prime}, \, g \in B_{H_2^\prime} \right\}
                \leq  \| x\|_{H_1 \xH H_2}.
    \]
    This yields $H_1 \xH H_2 \hookrightarrow H_1 \xeps H_2$ with embedding constant 1
    and completes the proof.
\end{proof}

For our purpose -- formulating variational problems on tensor product spaces
for the second moment and the covariance of the mild solution
to the stochastic partial differential equation -- the following
result on the dual space of the injective tensor product of separable Hilbert spaces
will be important.

\begin{lemma}\label{lem:dualequal}
    Let $H_1$ and $H_2$ be separable Hilbert spaces. Then the dual space
    of the injective tensor product space is isometrically isomorphic to the projective tensor product
    of the dual spaces, i.e., $(H_1 \xeps H_2)^\prime \cong H_1^\prime \xpi H_2^\prime$. 
\end{lemma}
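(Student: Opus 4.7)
My approach is to construct the canonical duality pairing between $H_1^\prime \xpi H_2^\prime$ and $H_1 \xeps H_2$, and then verify that it realises the desired isometric isomorphism. This is a classical fact in Banach space tensor product theory (see e.g.~\cite{ryan2002}), and in the Hilbert space setting it admits a clean proof via Schatten-class duality.

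First I would define the candidate isomorphism $J \colon H_1^\prime \xpi H_2^\prime \to (H_1 \xeps H_2)^\prime$ on the algebraic tensor product by
\[
    J\Bigl(\sum_{k=1}^{N} f_k \otimes g_k \Bigr)\Bigl(\sum_{\ell=1}^{M} \varphi_\ell \otimes \psi_\ell \Bigr)
    := \sum_{k=1}^{N} \sum_{\ell=1}^{M} f_k(\varphi_\ell) \, g_k(\psi_\ell).
\]
Independence from the chosen representations of $\xi = \sum_k f_k \otimes g_k$ and $x = \sum_\ell \varphi_\ell \otimes \psi_\ell$ follows from multilinearity. For any representation of $\xi$, the very definition of the injective norm yields
\[
    |J(\xi)(x)| \leq \sum_{k=1}^N \|f_k\|_{H_1^\prime} \|g_k\|_{H_2^\prime} \cdot \|x\|_{H_1 \xeps H_2},
\]
and passing to the infimum over representations of $\xi$ gives $|J(\xi)(x)| \leq \|\xi\|_{H_1^\prime\xpi H_2^\prime} \|x\|_{H_1\xeps H_2}$. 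By density of the algebraic tensor products, $J$ extends to a contractive operator $J \in \cL(H_1^\prime \xpi H_2^\prime; (H_1\xeps H_2)^\prime)$ with $\|J\| \leq 1$.

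For the reverse bound and surjectivity, I would reduce the problem to Schatten-class duality. Via the Riesz isomorphisms $H_i^\prime \cong H_i$, the map sending a simple tensor $\varphi \otimes \psi$ to the rank-one operator $\langle \varphi, \cdot \rangle_{H_1} \psi$ induces isometric identifications
\[
    H_1 \xeps H_2 \cong \cK(H_1; H_2), \qquad H_1^\prime \xpi H_2^\prime \cong \lnn(H_1; H_2),
\]
where $\cK(H_1; H_2)$ denotes the compact operators with the operator norm and $\lnn(H_1; H_2)$ the trace class operators with the nuclear norm. Under these identifications, $J$ corresponds to the canonical Schatten duality pairing $(T, S) \mapsto \tr(S^* T)$, which is well known to realise $\lnn(H_1; H_2)$ as an isometric dual of $\cK(H_1; H_2)$, completing the proof.

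The main technical obstacle is establishing the isometric identification $H_1 \xeps H_2 \cong \cK(H_1; H_2)$. On the algebraic tensor product this reduces to the claim that the injective norm of $\sum_k \varphi_k \otimes \psi_k$ coincides with the operator norm of the associated finite-rank operator $T := \sum_k \langle \varphi_k, \cdot\rangle_{H_1} \psi_k$. Using Riesz representation to identify $f \in B_{H_1^\prime}$, $g \in B_{H_2^\prime}$ with unit vectors $\chi_f \in B_{H_1}$, $\chi_g \in B_{H_2}$ (as in the proof of Lemma~\ref{lem:tensor}~\ref{lem:tensor-iii}), this recasts as the elementary identity $\|T\|_{\cL(H_1; H_2)} = \sup\{|\langle T\chi_f, \chi_g\rangle_{H_2}| : \|\chi_f\|_{H_1}, \|\chi_g\|_{H_2} \leq 1\}$. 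Density of finite-rank operators in $\cK(H_1; H_2)$ then extends the identification to the full completions, and the analogous identification for $\lnn(H_1; H_2)$ follows from the singular value decomposition of trace class operators.
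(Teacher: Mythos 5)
Your proposal is correct, but it takes a genuinely different route from the paper. The paper's proof is a chain of citations to Ryan's monograph: it identifies $(H_1 \xeps H_2)^\prime$ with the space of integral bilinear forms on $H_1\times H_2$, identifies $H_1^\prime \xpi H_2^\prime$ with the nuclear bilinear forms (using that Hilbert spaces have the approximation property), and then invokes the fact that for spaces with monotone shrinking Schauder bases every integral form is nuclear with equal norm. You instead construct the canonical duality pairing $J$ directly, obtain contractivity from the definition of the injective norm, and settle the isometry and surjectivity by passing through the operator-theoretic identifications $H_1 \xeps H_2 \cong \cK(H_1;H_2)$ and $H_1^\prime \xpi H_2^\prime \cong \lnn(H_1;H_2)$, reducing everything to the classical Schatten duality $\cK(H_1;H_2)^\prime \cong \lnn(H_1;H_2)$ via the trace pairing. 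Your computation that the injective norm of a finite tensor equals the operator norm of the associated finite-rank map, and that $J$ transports to $(T,S)\mapsto \tr(S^*T)$, both check out. What the paper's route buys is brevity and the precise location of the hypotheses in the general Banach-space literature; what yours buys is a concrete, essentially self-contained argument that exploits the Hilbert structure (Riesz representation, singular value decomposition) and connects naturally to the Schatten classes $\lnn$ and $\lhs$ already used throughout the paper. The only places where you lean on unproved classical facts --- injectivity of the natural map from $H_1^\prime \xpi H_2^\prime$ into $\lnn(H_1;H_2)$ (which is where the approximation property enters) and the Schatten duality itself --- are no less standard than the results the paper cites, so the level of rigor is comparable.
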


\begin{proof}
    The proof can be extracted from \cite{ryan2002} as follows:
    The dual space of the injective tensor product space can be identified with the Banach space
    of integral bilinear forms on~$H_1 \times H_2$ by~\cite[Proposition~3.14]{ryan2002}.
    In addition, since $H_1$ and $H_2$ are separable Hilbert spaces,
    the dual spaces $H_1^\prime$ and $H_2^\prime$ have the so-called approximation property,
    which implies that the projective tensor product of them can be identified with
    the Banach space of nuclear bilinear forms on~$H_1 \times H_2$ by~\cite[Corollary~4.8~(b)]{ryan2002}.
    In general, the space of nuclear bilinear forms
    is only a subspace of the space of integral bilinear forms. 
    Since we assume that $H_1$ and $H_2$ are separable Hilbert spaces,
    they have monotone shrinking Schauder bases
    and this fact implies that every integral bilinear form
    on $H_1 \times H_2$ is nuclear and the integral and nuclear norms coincide, cf.~\cite[Corollary~4.29]{ryan2002}.
    Hence, the spaces $(H_1 \xeps H_2)^\prime$ and $H_1^\prime \xpi H_2^\prime$
    are isometrically isomorphic. 
\end{proof}

\subsection{The covariance kernel and the multiplicative noise}\label{subsec:covkern}

For a $U$-valued L\'evy process $L$ with covariance operator $Q$ as considered in Section~\ref{section:spde},
we define the covariance kernel $q\in U^{(2)}$ as the unique element in the tensor space $U^{(2)}$ satisfying
\begin{equation}\label{eq:qdef}
    \utip{q}{x \otimes y} = \uip{Q x}{y}
\end{equation}
for all $x$, $y \in U$.
Note that for an orthonormal eigenbasis $(e_n)_{n\in\bbN}\subset U$ of $Q$
with corresponding eigenvalues $(\gamma_n)_{n\in\bbN}$ we may expand 
\begin{equation}\label{eq:qexp}
    q = \sum\limits_{n\in\bbN} \sum\limits_{m\in\bbN} \utip{q}{e_n\otimes e_m} (e_n\otimes e_m)
      = \sum\limits_{m\in\bbN} \gamma_m (e_m\otimes e_m)
\end{equation}
with convergence of the series in $U^{(2)}$,
since $(e_n \otimes e_m)_{n,m\in\bbN}$ is an orthonormal basis of $U^{(2)}$ and
$\utip{q}{e_n\otimes e_m} = \gamma_m \delta_{nm}$, where $\delta_{nm}$ denotes the Kronecker delta.
In addition, we obtain convergence of the series also with respect to $U^{(\pi)}$, which is shown
in the following lemma.

\begin{lemma}\label{lem:qupiconv}
    The series in \eqref{eq:qexp} converges in $U^{(\pi)}$, i.e.,
    \[
        \lim\limits_{M\to \infty} \Bigl\| q - \sum\limits_{m=1}^{M} \gamma_m (e_m \otimes e_m) \Bigr\|_{U^{(\pi)}} = 0.
    \]
\end{lemma}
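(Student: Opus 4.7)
The plan is to exploit the trace-class property of $Q$, which gives $\sum_{m\in\bbN} \gamma_m < +\infty$, together with the fact that simple tensors behave optimally under the projective norm: $\|e_m \otimes e_m\|_{U^{(\pi)}} \leq \|e_m\|_U \|e_m\|_U = 1$.

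First, I would consider the partial sums $q_M := \sum_{m=1}^{M} \gamma_m (e_m\otimes e_m) \in U \otimes U$ and show that they form a Cauchy sequence in $U^{(\pi)}$. For $N > M$, the triangle inequality for the projective norm and the definition yield
\[
    \Bigl\| q_N - q_M \Bigr\|_{U^{(\pi)}}
    = \Bigl\| \sum_{m=M+1}^{N} \gamma_m (e_m \otimes e_m) \Bigr\|_{U^{(\pi)}}
    \leq \sum_{m=M+1}^{N} \gamma_m \, \|e_m\|_U \, \|e_m\|_U
    = \sum_{m=M+1}^{N} \gamma_m.
\]
Since $Q \in \lnp(U)$ is trace class with $\tr(Q) = \sum_{m\in\bbN} \gamma_m < +\infty$, the right-hand side tends to zero as $M,N\to\infty$, so $(q_M)$ is Cauchy in the Banach space $U^{(\pi)}$ and hence converges to some limit $\widetilde{q} \in U^{(\pi)}$.

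Next, I would identify this limit with $q$. By Lemma~\ref{lem:tensor}~\ref{lem:tensor-iii}, the embedding $U^{(\pi)} \hookrightarrow U^{(2)}$ is continuous (with constant 1), so $q_M \to \widetilde{q}$ also in $U^{(2)}$. On the other hand, as noted in~\eqref{eq:qexp}, $q_M \to q$ in $U^{(2)}$ because $(e_n \otimes e_m)_{n,m\in\bbN}$ is an orthonormal basis of $U^{(2)}$ and $\utip{q}{e_n\otimes e_m} = \gamma_m \delta_{nm}$. Uniqueness of limits in $U^{(2)}$ forces $\widetilde{q} = q$ in $U^{(2)}$; since the embedding is injective, $q$ is identified with $\widetilde{q} \in U^{(\pi)}$, and the convergence $q_M \to q$ in $U^{(\pi)}$ follows.

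There is essentially no obstacle here: the argument is a direct consequence of the trace-class assumption, the projective tensor-norm inequality on elementary tensors, and the continuous embedding $U^{(\pi)} \hookrightarrow U^{(2)}$ provided by Lemma~\ref{lem:tensor}. The only point deserving a line of care is the identification of the $U^{(\pi)}$-limit with $q$, which rests on the injectivity of the embedding into $U^{(2)}$ and the uniqueness of limits therein.
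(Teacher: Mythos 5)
Your proof is correct and follows essentially the same route as the paper's: both arguments show the partial sums $q_M$ form a Cauchy sequence in $U^{(\pi)}$ using the trace-class bound $\sum_m \gamma_m < \infty$ and the projective-norm estimate on elementary tensors, then identify the resulting limit with $q$ via the continuous embedding $U^{(\pi)} \hookrightarrow U^{(2)}$ from Lemma~\ref{lem:tensor}~\ref{lem:tensor-iii} and uniqueness of limits in $U^{(2)}$.
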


\begin{proof}
    For $M\in\bbN$ define
    \begin{equation}\label{eq:qMdef}
        q_M := \sum\limits_{m=1}^{M} \gamma_m (e_m\otimes e_m).
    \end{equation}
    The trace class property of $Q$ implies that $\sum\nolimits_{m\in\bbN} \gamma_m < +\infty$. Hence, for
    any $\epsilon > 0$ there exists $N_0 \in \bbN$ such that $\sum\nolimits_{m=M+1}^{L} \gamma_m < \epsilon$ for all $L>M\geq N_0$
    and $(q_M)_{M\in\bbN}$ is a Cauchy sequence in $U^{(\pi)}$, since for any $L>M\geq N_0$ we obtain
    \[
        \| q_L - q_M \|_{U^{(\pi)}} = \Bigl\| \sum\limits_{m=M+1}^{L} \gamma_m (e_m \otimes e_m) \Bigr\|_{U^{(\pi)}}
                    \leq \sum\limits_{m=M+1}^{L} \gamma_m < \epsilon.
    \]
    The completeness of the space $U^{(\pi)}$ implies the existence of $q_* \in U^{(\pi)}$ such that
    $\lim\limits_{M\to\infty} \|q_M - q_* \|_{U^{(\pi)}} = 0$. The convergence $\lim\limits_{M\to\infty} q_M = q$
    in $U^{(2)}$ and the continuous embedding $U^{(\pi)} \hookrightarrow U^{(2)}$,
    cf.~Lemma~\ref{lem:tensor}~\ref{lem:tensor-iii}, yield $q = q_* \in U^{(\pi)}$.
\end{proof}

The bilinear form and the right-hand side appearing in the deterministic variational problems
in Sections~\ref{section:moment}~and~\ref{section:covariance}, contain several
terms depending on the operators $G_1$ and $G_2$ as well as on the
kernel $q$ that is associated with the covariance operator $Q$ via~\eqref{eq:qdef}.
To verify that they are well-defined we introduce the following Bochner spaces
as well as their inner products
\begin{align*}
    \hspace*{2cm} \cW &:=  L^2(\bbT; H), &&& \langle u_1, u_2 \rangle_\cW &:= \int_0^T \hip{u_1(t)}{u_2(t)} \, \rd t, \hspace*{2cm} \\
    \hspace*{2cm} \cX &:=  L^2(\bbT; V), &&& \langle v_1, v_2 \rangle_\cX &:= \int_0^T \langle v_1(t), v_2(t)\rangle_V \, \rd t \hspace*{2cm}
\end{align*}
and derive the results of the two lemmas below.
\begin{lemma}\label{lem:ggupi}
    For operators $G_1$ and $G_2$ satisfying Assumption~\ref{ass:spde}~\ref{ass:spde-v} the following properties hold:
    \begin{enumerate}
    \item\label{lem:ggupi-i}
        The linear operator $G_1 \otimes G_1 \colon U \otimes U \to \cL(\cX;\cW) \otimes \cL(\cX;\cW)$,
        \[
            \sum\limits_{\ell=1}^{M} \varphi_\ell^1 \otimes \varphi_\ell^2 \mapsto \sum\limits_{\ell=1}^{M} G_1(\cdot) \varphi_\ell^1 \otimes G_1(\cdot) \varphi_\ell^2
        \]
        admits a unique extension $G_1 \xpi G_1 \in \cL(U^{(\pi)}; \cL(\cX^{(\pi)};\cW^{(\pi)}))$.
    \item\label{lem:ggupi-ii}
        The linear operators $G_1 \otimes G_2 \colon U \otimes U \to \cL(\cX;\cW) \otimes H$
        and $G_2 \otimes G_1 \colon U \otimes U \to H \otimes \cL(\cX;\cW)$,
        \[
            \sum\limits_{\ell=1}^{M} \varphi_\ell^1 \otimes \varphi_\ell^2 \mapsto \sum\limits_{\ell=1}^{M} G_1(\cdot) \varphi_\ell^1 \otimes G_2 \varphi_\ell^2, \quad
            \sum\limits_{\ell=1}^{M} \varphi_\ell^1 \otimes \varphi_\ell^2 \mapsto \sum\limits_{\ell=1}^{M} G_2 \varphi_\ell^1 \otimes G_1(\cdot) \varphi_\ell^2
        \]
        admit unique extensions $G_1 \xpi G_2 \in \cL(U^{(\pi)}; \cL(\cX;\cW \xpi H))$
        and $G_2 \xpi G_1 \in \cL(U^{(\pi)}; \cL(\cX; H \xpi\cW))$.
    \item\label{lem:ggupi-iii}
        The linear operator $G_2 \otimes G_2 \colon U \otimes U \to H \otimes H$,
        \[
            \sum\limits_{\ell=1}^{M} \varphi_\ell^1 \otimes \varphi_\ell^2 \mapsto \sum\limits_{\ell=1}^{M} G_2 \varphi_\ell^1 \otimes G_2 \varphi_\ell^2
        \]
        admits a unique extension $G_2 \xpi G_2 \in \cL(U^{(\pi)}; H^{(\pi)})$.
    \end{enumerate}
\end{lemma}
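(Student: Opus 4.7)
The plan is, in each of the three cases, to derive a projective-norm estimate on the algebraic tensor product $U \otimes U$ and then extend by continuity to $U^{(\pi)}$ using completeness of the target Banach space. The preliminary observation from Assumption~\ref{ass:spde}~\ref{ass:spde-v} that I would use throughout is that for each $\varphi \in U$, the map $G_1(\cdot)\varphi \colon \cX \to \cW$ defined pointwise in $t\in\bbT$ by $x \mapsto \bigl(t \mapsto G_1(x(t))\varphi\bigr)$ lies in $\cL(\cX;\cW)$ with norm at most $\|G_1\|_{\cL(V;\cL(U;H))}\|\varphi\|_U$; this is immediate from the definitions of the Bochner norms of $\cX$ and $\cW$. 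The analogous bound $\|G_2\varphi\|_H \leq \|G_2\|_{\cL(U;H)}\|\varphi\|_U$ is trivial.

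Part~\ref{lem:ggupi-iii} is then an immediate application of Lemma~\ref{lem:tensor}~\ref{lem:tensor-i} with $S=T=G_2 \in \cL(U;H)$, yielding $G_2 \xpi G_2 \in \cL(U^{(\pi)};H^{(\pi)})$ of norm at most $\|G_2\|_{\cL(U;H)}^2$. For part~\ref{lem:ggupi-i}, I would apply Lemma~\ref{lem:tensor}~\ref{lem:tensor-i} to each elementary tensor: for $\varphi^1,\varphi^2\in U$, the operators $G_1(\cdot)\varphi^1, G_1(\cdot)\varphi^2 \in \cL(\cX;\cW)$ admit a unique projective tensor extension in $\cL(\cX^{(\pi)};\cW^{(\pi)})$ of norm at most $\|G_1\|_{\cL(V;\cL(U;H))}^2 \|\varphi^1\|_U \|\varphi^2\|_U$. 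Given any representation $x = \sum_{\ell=1}^M \varphi_\ell^1 \otimes \varphi_\ell^2$ of $x \in U \otimes U$, the triangle inequality and this bound yield
\[
\|(G_1 \otimes G_1)(x)\|_{\cL(\cX^{(\pi)};\cW^{(\pi)})} \leq \|G_1\|_{\cL(V;\cL(U;H))}^2 \sum_{\ell=1}^{M} \|\varphi_\ell^1\|_U \|\varphi_\ell^2\|_U,
\]
and taking the infimum over such representations produces the bound $\|G_1\|_{\cL(V;\cL(U;H))}^2 \|x\|_{U^{(\pi)}}$. Density of $U\otimes U$ in $U^{(\pi)}$ and completeness of $\cL(\cX^{(\pi)};\cW^{(\pi)})$ then furnish the unique continuous extension $G_1 \xpi G_1$.

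Part~\ref{lem:ggupi-ii} proceeds analogously, but the asymmetric target $\cL(\cX; \cW \xpi H)$ requires treating the action on $y \in \cX$ pointwise: for $\varphi^1,\varphi^2 \in U$ and $y \in \cX$, the element $(G_1(\cdot)\varphi^1 \otimes G_2\varphi^2)(y)$ is identified with $G_1(y)\varphi^1 \otimes G_2\varphi^2 \in \cW \otimes H$, whose projective norm is bounded by the product $\|G_1\|_{\cL(V;\cL(U;H))} \|G_2\|_{\cL(U;H)} \|y\|_\cX \|\varphi^1\|_U \|\varphi^2\|_U$. Summing this estimate over a representation of $x$ via the triangle inequality, taking the supremum over $y$ in the closed unit ball of $\cX$, and finally the infimum over representations yields the desired projective norm bound and hence the continuous extension $G_1 \xpi G_2 \in \cL(U^{(\pi)}; \cL(\cX; \cW \xpi H))$; the case of $G_2 \xpi G_1$ is entirely symmetric.

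The main obstacle is not analytic but organizational, namely keeping the three different target spaces straight and correctly identifying each tensor product of operator values as a bounded operator in the appropriate space. Once the elementary estimates above are in place, all three claims follow from the same recipe of summing over a finite representation and taking the infimum, followed by a density argument combined with Lemma~\ref{lem:tensor}~\ref{lem:tensor-i}.
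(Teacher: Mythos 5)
Your proposal is correct and follows essentially the same route as the paper: the same preliminary identification of $G_1$ as an element of $\cL(U;\cL(\cX;\cW))$ via Assumption~\ref{ass:spde}~\ref{ass:spde-v}, the same use of Lemma~\ref{lem:tensor}~\ref{lem:tensor-i} on elementary tensors, and the same triangle-inequality-plus-infimum-over-representations argument followed by extension by density. The pointwise-in-$y\in\cX$ treatment of the asymmetric targets in part~\ref{lem:ggupi-ii} is exactly how the paper handles it as well.
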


\begin{proof}
    We first note that $G_1 \in \cL(V; \cL(U;H))$ implies that
    $G_1$ can be identified with an element in $\cL(U;\cL(\cX;\cW))$,
    because for any $\varphi \in U$ we estimate
    \begin{align*}
        \|G_1&(\cdot)\varphi\|_{\cL(\cX;\cW)} = \sup\limits_{\substack{u \in \cX \\ \xnorm{u} = 1}} \| G_1(u)\varphi \|_\cW
                  = \sup\limits_{\substack{u \in \cX \\ \xnorm{u} = 1}} \left( \int_0^T \| G_1(u(t)) \varphi \|_H^2 \, \rd t \right)^{\frac{1}{2}} \\
                &\leq \|\varphi \|_U \sup\limits_{\substack{u \in \cX \\ \xnorm{u} = 1}} \left( \int_0^T \| G_1(u(t)) \|_{\cL(U;H)}^2 \, \rd t \right)^{\frac{1}{2}} \\
                &\leq \|\varphi \|_U \sup\limits_{\substack{u \in \cX \\ \xnorm{u} = 1}} \left( \int_0^T \| G_1\|_{\cL(V;\cL(U;H))}^2 \|u(t) \|_{V}^2 \, \rd t \right)^{\frac{1}{2}}
                 \leq \| G_1 \|_{\cL(V; \cL(U;H))} \|\varphi\|_U.
    \end{align*}
    This inequality shows that
    \[
        G_1 \in \cL(U; \cL(\cX; \cW)), \quad \| G_1\|_{\cL(U; \cL(\cX; \cW))} \leq \| G_1 \|_{\cL(V; \cL(U;H))}.
    \]

    In order to prove \ref{lem:ggupi-i} note that by Lemma~\ref{lem:tensor}~\ref{lem:tensor-i} for two vectors $\varphi^1$, $\varphi^2 \in U$
    there exists a unique operator $G_1(\cdot)\varphi^1 \xpi G_1(\cdot)\varphi^2 \colon \cX^{(\pi)} \to \cW^{(\pi)}$
    satisfying
    \[
        \left(G_1(\cdot)\varphi^1 \xpi G_1(\cdot)\varphi^2\right)(u) = \sum\limits_{k=1}^{N} G_1(u_k^1)\varphi^1 \otimes G_1(u_k^2)\varphi^2
    \]
    for any representation $\sum\nolimits_{k=1}^{N} u_k^1 \otimes u_k^2$
    of $u\in\cX\otimes\cX$. This operator is bounded because
    \[
        \| G_1(\cdot)\varphi^1 \xpi G_1(\cdot)\varphi^2\|_{\cL(\cX^{(\pi)}; \cW^{(\pi)})} = \| G_1(\cdot)\varphi^1\|_{\cL(\cX;\cW)} \| G_1(\cdot)\varphi^2\|_{\cL(\cX;\cW)}.
    \]
    In addition, for a representation $\sum\nolimits_{\ell=1}^{M} \varphi_\ell^1 \otimes \varphi_\ell^2$
    of $\varphi \in U\otimes U$ we may extend
    \[
        (G_1(\cdot) \otimes G_1(\cdot)) \varphi = \sum\limits_{\ell=1}^{M} G_1(\cdot)\varphi_\ell^1 \otimes G_1(\cdot)\varphi_\ell^2 \colon \cX \otimes \cX \to \cW \otimes \cW
    \]
    to a bounded linear operator $(G_1(\cdot) \xpi G_1(\cdot)) \varphi \in\cL(\cX^{(\pi)}; \cW^{(\pi)})$, since
    \begin{align*}
        \|(G_1&(\cdot) \otimes G_1(\cdot)) \varphi\|_{\cL(\cX^{(\pi)}; \cW^{(\pi)})}
                 \leq \sum\limits_{\ell=1}^{M} \| G_1(\cdot)\varphi_\ell^1 \otimes G_1(\cdot)\varphi_\ell^2 \|_{\cL(\cX^{(\pi)}; \cW^{(\pi)})} \\
                &= \sum\limits_{\ell=1}^{M} \| G_1(\cdot)\varphi_\ell^1 \|_{\cL(\cX;\cW)} \|G_1(\cdot)\varphi_\ell^2 \|_{\cL(\cX; \cW)}
                 \leq  \| G_1\|_{\cL(V; \cL(U;H))}^2 \sum\limits_{\ell=1}^{M} \|\varphi^1\|_U \|\varphi^2\|_U
    \end{align*}
    by the observations above. Therefore, $(G_1(\cdot) \otimes G_1(\cdot)) \varphi \in \cL(\cX^{(\pi)}; \cW^{(\pi)})$
    for all $\varphi\in U\otimes U$ with
    \[
        \| (G_1(\cdot) \otimes G_1(\cdot))\varphi \|_{\cL(\cX^{(\pi)}; \cW^{(\pi)})}
            \leq \| G_1 \|_{\cL(V; \cL(U;H))}^2 \|\varphi\|_{U^{(\pi)}}.
    \]
    This estimate shows that 
    $G_1 \otimes G_1 \colon U\otimes U \to \cL(\cX;\cW) \otimes \cL(\cX;\cW)$ admits a unique continuous extension
    to an operator $G_1 \xpi G_1 \in \cL(U^{(\pi)}; \cL(\cX^{(\pi)};\cW^{(\pi)}))$.

    For part~\ref{lem:ggupi-ii}, let $\sum\nolimits_{\ell=1}^{M} \varphi_\ell^1 \otimes \varphi_\ell^2$ be again a representation of $\varphi \in U\otimes U$.
    Then, for $u\in\cX$ we calculate
    \begin{align*}
        \Bigl\| \sum\limits_{\ell=1}^{M} G_1(u) \varphi_\ell^1 &\otimes G_2 \varphi_\ell^2 \Bigr\|_{\cW \xpi H}
             \leq \sum\limits_{\ell=1}^{M} \| G_1(u) \varphi_\ell^1 \|_\cW \|G_2 \varphi_\ell^2 \|_H \\
            &\leq \sum\limits_{\ell=1}^{M} \| G_1(\cdot)\varphi_\ell^1  \|_{\cL(\cX; \cW)} \xnorm{u} \|G_2 \|_{\cL(U;H)} \|\varphi_\ell^2 \|_U \\
            &\leq \| G_1\|_{\cL(V;\cL(U;H))} \|G_2 \|_{\cL(U;H)} \xnorm{u} \sum\limits_{\ell=1}^{M} \|\varphi_\ell^1 \|_U \|\varphi_\ell^2 \|_U.
    \end{align*}
    This calculation implies that $(G_1(\cdot)\otimes G_2)\varphi \in \cL(\cX; \cW \xpi H)$
    for any $\varphi \in U\otimes U$ with
    \[
        \| (G_1(\cdot)\otimes G_2)\varphi \|_{\cL(\cX; \cW \xpi H)}
            \leq \| G_1\|_{\cL(V;\cL(U;H))} \|G_2 \|_{\cL(U;H)} \|\varphi\|_{U^{(\pi)}},
    \]
    and that there exists a unique extension $G_1 \xpi G_2 \in \cL(U^{(\pi)}; \cL(\cX; \cW \xpi H))$.
    It is obvious that the same argumentation yields existence and uniqueness of an extension
    $G_2 \xpi G_1 \in \cL(U^{(\pi)}; \cL(\cX; H \xpi \cW))$ of $G_2 \otimes G_1$.

    Assertion~\ref{lem:ggupi-iii} follows immediately, since $G_2 \in \cL(U;H)$ implies the existence of $G_2 \xpi G_2 \in \cL(U^{(\pi)}; H^{(\pi)})$
    by Lemma~\ref{lem:tensor}~\ref{lem:tensor-i}.
\end{proof}

\begin{lemma}\label{lem:ggqvv}
    Define $q\in U^{(2)}$ as in~\eqref{eq:qdef} and let $G_1$ and $G_2$ satisfy
    Assumption~\ref{ass:spde}~\ref{ass:spde-v}. Then,
    \begin{enumerate}
    \item\label{lem:ggqvv-i}
        $(G_1\otimes G_1)(\cdot)q \colon \cX^{(\pi)} \to \cW^{(\pi)}$ is bounded and
        \begin{equation}\label{eq:ggqvv}
            \| (G_1\otimes G_1)(\cdot)q \|_{\cL( \cX^{(\pi)}; \cW^{(\pi)} )}
                \leq \|G_1 \|_{\cL(V; \lhs(\cH; H))}^2;
        \end{equation}
    \item\label{lem:ggqvv-ii}
        $(G_1(\cdot) \otimes G_2)q \in \cL(\cX; \cW \xpi H)$ and
        $(G_2 \otimes G_1(\cdot))q \in \cL(\cX; H \xpi \cW)$;
    \item\label{lem:ggqvv-iii}
        $(G_2 \otimes G_2) q \in H^{(\pi)}$.
    \end{enumerate}
\end{lemma}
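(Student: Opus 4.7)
The plan is to exploit two facts simultaneously: $q \in U^{(\pi)}$ from Lemma~\ref{lem:qupiconv}, and the bounded extensions constructed in Lemma~\ref{lem:ggupi}. Parts~\ref{lem:ggqvv-ii} and~\ref{lem:ggqvv-iii} will follow immediately: I simply evaluate the operators $G_1 \xpi G_2$, $G_2 \xpi G_1$, and $G_2 \xpi G_2$ supplied by Lemma~\ref{lem:ggupi}\ref{lem:ggupi-ii}--\ref{lem:ggupi-iii} at $q \in U^{(\pi)}$ to obtain elements of $\cL(\cX; \cW \xpi H)$, $\cL(\cX; H \xpi \cW)$, and $H^{(\pi)}$, respectively.

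For part~\ref{lem:ggqvv-i}, boundedness of $(G_1 \xpi G_1)(q) \in \cL(\cX^{(\pi)}; \cW^{(\pi)})$ is also immediate from Lemma~\ref{lem:ggupi}\ref{lem:ggupi-i}, but the resulting bound only scales like $\|G_1\|_{\cL(V;\cL(U;H))}^2 \|q\|_{U^{(\pi)}} = \|G_1\|_{\cL(V;\cL(U;H))}^2 \tr(Q)$, which is strictly weaker than~\eqref{eq:ggqvv}. To recover the sharper Hilbert--Schmidt form, I would make two preparatory observations. First, $\{\gamma_m^{1/2} e_m : \gamma_m \neq 0\}$ is an orthonormal basis of $\cH = Q^{1/2} U$, so that the expansion $q = \sum_m (\gamma_m^{1/2} e_m)\otimes(\gamma_m^{1/2} e_m)$ converges in $U^{(\pi)}$. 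Second, a Tonelli computation shows that, for any $w \in \cX$, the operator $G_1(w)\colon \cH\to \cW$ is Hilbert--Schmidt with
\[
   \|G_1(w)\|_{\lhs(\cH;\cW)} \leq \|G_1\|_{\cL(V;\lhs(\cH;H))} \|w\|_\cX.
\]

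Then, for $u \in \cX \otimes \cX$ with any representation $\sum_{k=1}^N u_k^1 \otimes u_k^2$, I would evaluate $[(G_1 \xpi G_1)(q)](u) = \sum_k (G_1(u_k^1) \xpi G_1(u_k^2))(q)$, expand each summand by the basis representation of $q$, and bound its $\cW^{(\pi)}$-norm by the projective-norm triangle inequality followed by Cauchy--Schwarz in $\ell^2$. This Cauchy--Schwarz step is the heart of the argument: it collapses the mixed sum $\sum_m \|G_1(u_k^1)(\gamma_m^{1/2} e_m)\|_\cW \|G_1(u_k^2)(\gamma_m^{1/2} e_m)\|_\cW$ into the product $\|G_1(u_k^1)\|_{\lhs(\cH;\cW)} \|G_1(u_k^2)\|_{\lhs(\cH;\cW)}$, which is then dominated by $\|G_1\|_{\cL(V;\lhs(\cH;H))}^2 \|u_k^1\|_\cX \|u_k^2\|_\cX$ via the preparatory Hilbert--Schmidt bound. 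Summing over $k$ and taking the infimum over representations of $u$ yields~\eqref{eq:ggqvv}. The principal technical hurdle is precisely this reorganisation: the default estimate from Lemma~\ref{lem:ggupi}\ref{lem:ggupi-i} treats $q$ as a generic element of $U^{(\pi)}$ and therefore cannot avoid the factor $\|q\|_{U^{(\pi)}} = \tr(Q)$, whereas the basis expansion together with $\ell^2$-Cauchy--Schwarz exchanges that factor for the Hilbert--Schmidt norm of $G_1$ by exploiting the specific form of $q$.
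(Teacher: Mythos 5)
Your proposal is correct and follows essentially the same route as the paper: parts (ii) and (iii) and the mere boundedness in (i) by evaluating the extensions from Lemma~\ref{lem:ggupi} at $q\in U^{(\pi)}$, and the sharper constant in~\eqref{eq:ggqvv} by expanding $q$ in the eigenbasis, applying Cauchy--Schwarz in $\ell^2$ over the index $m$, and recognising $\sum_m\|G_1(v)f_m\|_{\cW}^2$ as a Hilbert--Schmidt norm because $\{f_m:\gamma_m\neq 0\}$ is an orthonormal basis of $\cH$. The only cosmetic difference is that the paper works with the finite truncations $q_M$ from~\eqref{eq:qMdef} and passes to the limit via Lemmas~\ref{lem:qupiconv} and~\ref{lem:ggupi}~\ref{lem:ggupi-i}, whereas you sum the series directly in $U^{(\pi)}$; both are valid.
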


\begin{proof}
    The results
    $(G_1 \otimes G_1)(\cdot)q \in \cL(\cX^{(\pi)}; \cW^{(\pi)} )$,
    $(G_1(\cdot) \otimes G_2)q \in \cL(\cX; \cW \xpi H)$,
    $(G_2 \otimes G_1(\cdot))q \in \cL(\cX; H \xpi \cW)$
    and $(G_2 \otimes G_2) q \in H^{(\pi)}$ are immediate consequences of Lemma~\ref{lem:ggupi},
    since $q\in U^{(\pi)}$ by Lemma~\ref{lem:qupiconv}.

    In order to prove the bound in~\eqref{eq:ggqvv}, let $M\in\bbN$
    and define $q_M \in U\otimes U$ as in~\eqref{eq:qMdef}.
    Set $f_m := \sqrt{\gamma_m} \, e_m$, $m\in\bbN$,
    and let $\sum\nolimits_{k=1}^{N} u_k^1 \otimes u_k^2$
    be a representation of $u \in \cX \otimes \cX$. 
    Then,
    \begin{align*}
        \| (G_1 \xpi G_1)(u) q_M \|_{\cW^{(\pi)}}
            &\leq \sum\limits_{k=1}^{N} \sum\limits_{m=1}^{M} \| G_1(u_k^1) f_m \|_\cW \| G_1(u_k^2) f_m \|_\cW \\
            &\leq \sum\limits_{k=1}^{N} \left( \sum\limits_{m=1}^{M} \| G_1(u_k^1) f_m \|_\cW^2 \right)^{\frac{1}{2}}
                                        \left( \sum\limits_{m=1}^{M} \| G_1(u_k^2) f_m \|_\cW^2 \right)^{\frac{1}{2}} \\
            &\leq \| G_1\|_{\cL(V;\lhs(\cH;H))}^2 \sum\limits_{k=1}^{N} \|u_k^1 \|_\cX \| u_k^2 \|_\cX,
    \end{align*}
    since for $v \in \cX$ we obtain
    \begin{align*}
        \sum\limits_{m=1}^{M} \| G_1(v) f_m \|_\cW^2
            &= \int_0^T \sum\limits_{m=1}^{M} \| G_1(v(t)) f_m \|_H^2 \, \rd t
             \leq  \int_0^T \| G_1(v(t)) \|_{\lhs(\cH;H)}^2 \, \rd t,
        \intertext{where the last inequality follows from the fact that the set $\{f_j : j\in\bbN, \, \gamma_j \neq 0\}$ forms
        an orthonormal basis of $\cH$. Therefore, }
        \sum\limits_{m=1}^{M} \| G_1(v) f_m \|_\cW^2
            &\leq \| G_1\|_{\cL(V;\lhs(\cH;H))}^2 \int_0^T \| v(t)\|_{V}^2 \, \rd t = \| G_1\|_{\cL(V;\lhs(\cH;H))}^2 \xnorm{v}^2
    \end{align*}
    and, hence, $(G_1 \xpi G_1)(\cdot) q_M \in \cL(\cX^{(\pi)}; \cW^{(\pi)})$ for all $M\in\bbN$ with
    \[
        \|(G_1 \otimes G_1)(\cdot) q_M \|_{\cL(\cX^{(\pi)}; \cW^{(\pi)})} \leq \| G_1\|_{\cL(V;\lhs(\cH;H))}^2.
    \]
    The bound for $(G_1 \xpi G_1)(\cdot) q$ in \eqref{eq:ggqvv} follows from Lemmas~\ref{lem:qupiconv} and~\ref{lem:ggupi}~\ref{lem:ggupi-i}, since
    $\lim\nolimits_{M\to\infty} q_M = q$ in $U^{(\pi)}$ and $G_1 \xpi G_1 \in \cL(U^{(\pi)}; \cL(\cX^{(\pi)}; \cW^{(\pi)}))$.
\end{proof}

\subsection{The diagonal trace operator}\label{subsec:diagonal}

We introduce the spaces
$H^1_{0,\{T\}}(\bbT; V^*) := \left\{v \in H^1(\bbT; V^*) : v(T)=0\right\}$
as well as $\cY := L^2(\bbT; V) \cap H^1_{0,\{T\}}(\bbT; V^*)$. $\cY$ is a Hilbert space with respect
to the inner product
\[
    \langle v_1, v_2 \rangle_\cY := \langle v_1, v_2 \rangle_{L^2(\bbT; V)}
                                  + \langle \partial_t v_1, \partial_t v_2 \rangle_{L^2(\bbT; V^*)},
    \quad v_1, v_2 \in \cY.
\]
Moreover, we obtain the following two continuous embeddings.
\begin{lemma}\label{lem:C0Y}
    It holds that $\cY \hookrightarrow C^0(\bbT; H)$ with embedding constant $C\leq 1$, i.e.,
    $\sup\limits_{s\in\bbT} \hnorm{v(s)} \leq \ynorm{v}$ for every $v \in \cY$.
\end{lemma}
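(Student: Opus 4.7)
The plan is to reduce to the classical energy identity and exploit the terminal condition $v(T)=0$ to sharpen the embedding constant to 1.

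First, I would appeal to the standard result on the abstract function space $L^2(\bbT; V) \cap H^1(\bbT; V^*)$ (see e.g.~\cite[Theorem~1 and Theorem~2 in~\S XVIII.1]{dautray1999}, which the authors cite): every such function admits a representative in $C^0(\bbT; H)$, the inclusion $\cY \hookrightarrow C^0(\bbT; H)$ is continuous, and the scalar function $t \mapsto \hnorm{v(t)}^2$ is absolutely continuous on $\bbT$ with
\[
    \frac{\rd}{\rd t} \hnorm{v(t)}^2 = 2 \, \vvdp{\partial_t v(t)}{v(t)} \quad \text{for a.e.~} t\in\bbT.
\]
This already gives $\cY \hookrightarrow C^0(\bbT; H)$; the remaining task is to upgrade the embedding constant to $1$.

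Next, for arbitrary $s \in \bbT$, I would integrate this identity from $s$ to $T$ and invoke the boundary condition $v(T) = 0$ built into the definition of $\cY$ to obtain
\[
    \hnorm{v(s)}^2 = -\,2 \int_s^T \vvdp{\partial_t v(t)}{v(t)} \, \rd t.
\]
Applying the duality bound $|\vvdp{f}{g}| \leq \vdnorm{f}\vnorm{g}$, extending the integration to all of $\bbT$ for the upper bound, then using Cauchy--Schwarz in time followed by the elementary inequality $2ab \leq a^2 + b^2$ yields
\[
    \hnorm{v(s)}^2 \leq 2 \int_0^T \vdnorm{\partial_t v(t)} \vnorm{v(t)} \, \rd t
          \leq \|\partial_t v\|_{L^2(\bbT; V^*)}^2 + \|v\|_{L^2(\bbT; V)}^2 = \ynorm{v}^2.
\]
Taking the supremum over $s \in \bbT$ gives the desired bound with constant $1$.

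The main (and only mildly delicate) step is justifying the chain rule in the $V$--$V^*$ duality setting; this is the standard Lions--Magenes type identity, so the route is essentially to invoke an existing reference rather than prove it anew. The rest is a clean combination of the terminal condition and Young's inequality, which is exactly what forces the constant to be $1$ rather than some larger value.
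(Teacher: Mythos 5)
Your proposal is correct and follows essentially the same route as the paper's own proof: both invoke the Lions--Magenes identity for $\tfrac{\rd}{\rd t}\hnorm{v(t)}^2$ from \cite[\S XVIII.1]{dautray1999}, integrate up to $T$ using $v(T)=0$, and conclude via Cauchy--Schwarz and Young's inequality to get the constant $1$.
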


\begin{proof}
    For every $v \in \cY =  L^2(\bbT; V) \cap H_{0, \{T\}}^1(\bbT; V^*)$ we have the relation
    \[
        \hnorm{v(r)}^2 - \hnorm{v(s)}^2 = \int_s^r 2 \, \vvdp{\partial_t v(t)}{v(t)} \, \rd t ,
        \quad r,s \in \bbT, \, r>s,
    \]
    cf.~\cite[§XVIII.1, Theorem~2]{dautray1999}.
    Choosing $r=T$ and observing that $v(T)=0$ leads to
    \[
        \hnorm{v(s)}^2 \leq 2 \, \| \partial_t v \|_{L^2(\bbT; V^*)} \| v\|_{L^2(\bbT; V)}
                       \leq \| \partial_t v \|_{L^2(\bbT; V^*)}^2 + \| v\|_{L^2(\bbT; V)}^2 = \ynorm {v}^2.
                       \qedhere
    \]
\end{proof}

\begin{lemma}\label{lem:C0Yeps}
    The injective tensor product space satisfies
    $\cY^{(\varepsilon)} \hookrightarrow C^0(\bbT; H)^{(\varepsilon)}$
    with embedding constant $C\leq 1$.
\end{lemma}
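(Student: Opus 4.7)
The plan is to proceed in parallel with the proof of Lemma~\ref{lem:tensor}~\ref{lem:tensor-iii}, using the continuous embedding $\iota: \cY \hookrightarrow C^0(\bbT; H)$ provided by Lemma~\ref{lem:C0Y} in place of the Hilbert-space structure. By density of $\cY \otimes \cY$ in $\cY^{(\varepsilon)}$ and continuity, it suffices to establish the pointwise inequality
\[
    \| x \|_{C^0(\bbT; H)^{(\varepsilon)}} \leq \| x \|_{\cY^{(\varepsilon)}}
\]
for every $x \in \cY \otimes \cY$, interpreting the left-hand side via the natural image of $\cY \otimes \cY$ inside $C^0(\bbT; H) \otimes C^0(\bbT; H)$ induced by $\iota$. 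Extending by continuity then produces the desired bounded linear map $\cY^{(\varepsilon)} \to C^0(\bbT; H)^{(\varepsilon)}$ with operator norm at most $1$.

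The key mechanism is dual restriction. Since $\|\iota\|_{\cL(\cY; C^0(\bbT; H))} \leq 1$ by Lemma~\ref{lem:C0Y}, the map $F \mapsto F \circ \iota$ sends the closed unit ball $B_{C^0(\bbT;H)^\prime}$ into the closed unit ball $B_{\cY^\prime}$. Fixing a representation $x = \sum_{k=1}^N v_k \otimes w_k$ with $v_k, w_k \in \cY$, the defining formula for the injective norm gives
\[
    \| x \|_{C^0(\bbT;H)^{(\varepsilon)}}
        = \sup \Bigl\{ \Bigl| \sum_{k=1}^N F(v_k) G(w_k) \Bigr| : F, G \in B_{C^0(\bbT;H)^\prime} \Bigr\},
\]
and for each admissible pair $(F, G)$ the restricted functionals $f := F \circ \iota$ and $g := G \circ \iota$ lie in $B_{\cY^\prime}$ and satisfy $F(v_k) = f(v_k)$ and $G(w_k) = g(w_k)$. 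Consequently each expression inside the supremum is bounded by the analogous supremum defining $\|x\|_{\cY^{(\varepsilon)}}$, which yields the claim.

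I do not anticipate any real obstacle. One might naively try to prove an isometric embedding by lifting functionals from $\cY^\prime$ up to $C^0(\bbT;H)^\prime$ via Hahn--Banach; this would fail in general because $\iota$ is only norm-decreasing, not isometric. Fortunately, the statement only requires the one-sided bound, which is handled by restriction in the opposite direction and is automatic from $\|\iota\| \leq 1$. The argument is therefore entirely analogous in spirit to the second embedding proved in Lemma~\ref{lem:tensor}~\ref{lem:tensor-iii}, but substantially shorter, since no orthogonal projection or Riesz representation is needed once Lemma~\ref{lem:C0Y} is in hand.
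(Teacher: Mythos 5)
Your argument is correct and is essentially the paper's own proof: both rest on the observation that, since the embedding $\cY \hookrightarrow C^0(\bbT;H)$ from Lemma~\ref{lem:C0Y} has norm at most $1$, restriction sends the unit ball $B_{C^0(\bbT;H)^\prime}$ into $B_{\cY^\prime}$, so the supremum defining the $C^0(\bbT;H)^{(\varepsilon)}$-norm is taken over a smaller set of functionals than the one defining the $\cY^{(\varepsilon)}$-norm. Your explicit phrasing via $F \mapsto F\circ\iota$ and the remark about why a Hahn--Banach lifting is neither needed nor available are fine, but add nothing beyond the paper's two-line computation.
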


\begin{proof}
    The continuous embedding of Lemma~\ref{lem:C0Y}
    implies that $\| g \|_{\cY^\prime} \leq \| g \|_{C^0(\bbT; H)^\prime}$ for all $g\in C^0(\bbT; H)^\prime$.
    Therefore, the unit balls of the dual spaces satisfy $B_{C^0(\bbT; H)^\prime} \subset B_{\cY^\prime}$ and
    the embedding of the injective tensor product spaces follows,
    since for $\sum_{k=1}^{N} v_k^1 \otimes v_k^2 \in \cY \otimes \cY$
    we obtain 
    \begin{align*}
        \Bigl \| \sum\limits_{k=1}^{N} v_k^1 &\otimes v_k^2 \Bigr\|_{C^0(\bbT; H)^{(\varepsilon)}}
             = \sup\left\{ \Bigl| \sum\limits_{k=1}^{N} f\bigl(v_k^1\bigr) \, g\bigl(v_k^2\bigr) \Bigr| :
                            f,g \in B_{C^0(\bbT; H)^\prime} \right\} \\
             &\leq \sup\left\{ \Bigl| \sum\limits_{k=1}^{N} f\bigl(v_k^1\bigr) \, g\bigl(v_k^2\bigr) \Bigr| :
                            f,g \in B_{\cY^\prime} \right\}
             = \Bigl \| \sum\limits_{k=1}^{N} v_k^1 \otimes v_k^2 \Bigr\|_{\cY^{(\varepsilon)}}. \qedhere
    \end{align*}
\end{proof}

In the deterministic equations satisfied by the second moment and the covariance, 
an operator associated with the
\emph{diagonal trace} will play an important role.
For $u \in \cW\otimes\cW$, $v\in\cY \otimes \cY$ and representations
$\sum\nolimits_{k=1}^{N} u_k^1 \otimes u_k^2$ and $\sum\nolimits_{\ell=1}^{M} v_\ell^1 \otimes v_\ell^2$
of $u$ and $v$, respectively, we define
\begin{equation}\label{eq:Tdelta}
    T_\delta(u)v := \sum\limits_{k=1}^{N} \sum\limits_{\ell=1}^{M} \int_0^T \hip{u_k^1(t)}{v_\ell^1(t)} \hip{u_k^2(t)}{v_\ell^2(t)} \, \rd t.
\end{equation}
In addition, for $\widetilde{u} \in \cW\otimes H$ and $\hat{u} \in H\otimes\cW$
with representations $\sum\nolimits_{k=1}^{N} u_k \otimes \varphi_k$,
and $\sum\nolimits_{k=1}^{N} \varphi_k \otimes u_k$, $u_k\in\cW$, $\varphi_k\in H$, respectively,
as well as $\varphi \in H\otimes H$ with representation
$\sum\nolimits_{k=1}^{N} \varphi_k^1 \otimes \varphi_k^2$ we define $T_\delta$ accordingly,
\begin{align*}
    T_\delta(\widetilde{u})v &:= \sum\limits_{k=1}^{N} \sum\limits_{\ell=1}^{M} \int_0^T \hip{u_k(t)}{v_\ell^1(t)} \hip{\varphi_k}{v_\ell^2(t)} \, \rd t, \\
    T_\delta(\hat{u})v       &:= \sum\limits_{k=1}^{N} \sum\limits_{\ell=1}^{M} \int_0^T \hip{\varphi_k}{v_\ell^1(t)} \hip{u_k(t)}{v_\ell^2(t)} \, \rd t, \\
    T_\delta(\varphi)v       &:= \sum\limits_{k=1}^{N} \sum\limits_{\ell=1}^{M} \int_0^T \hip{\varphi_k^1}{v_\ell^1(t)} \hip{\varphi_k^2}{v_\ell^2(t)} \, \rd t.
\end{align*}
With these definitions, $T_\delta$ admits unique extensions to bounded linear operators
mapping from the projective tensor spaces
$\cW \xpi \cW$, $\cW \xpi H$, $H \xpi \cW$, and $H\xpi H$, respectively, to the dual space
$\cY^{(\varepsilon)\prime} = \cL(\cY^{(\varepsilon)};\bbR)$ of the injective tensor space
$\cY \xeps \cY$ as we prove in the following proposition.

\begin{proposition}\label{prop:deltaggq}
    The operator $T_\delta \colon (\cW\otimes\cW)\times (\cY\otimes\cY) \to \bbR$ defined in~\eqref{eq:Tdelta}
    admits a unique extension to a bounded linear operator $T_\delta\in\cL(\cW^{(\pi)}; \cY^{(\varepsilon)\prime})$ with
    $\| T_\delta\|_{\cL(\cW^{(\pi)}; \cY^{(\varepsilon)\prime})} \leq 1$.
    Furthermore, $T_\delta$ as an operator acting on $\cW\otimes H$, $H \otimes\cW$, and $H\otimes H$
    admits unique extensions to $T_\delta\in\cL(\cW \xpi H; \cY^{(\varepsilon)\prime})$,
    $T_\delta\in\cL(H \xpi \cW; \cY^{(\varepsilon)\prime})$, and
    $T_\delta\in\cL(H^{(\pi)}; \cY^{(\varepsilon)\prime})$, respectively.
\end{proposition}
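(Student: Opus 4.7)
The plan is to prove the estimate
$$|T_\delta(u)v|\le \|u\|_{\cW^{(\pi)}}\,\|v\|_{\cY^{(\varepsilon)}}$$
on the algebraic tensor product $(\cW\otimes\cW)\times(\cY\otimes\cY)$ and then extend by continuity, producing a unique $T_\delta\in\cL(\cW^{(\pi)};\cY^{(\varepsilon)\prime})$ of norm at most one. Well-definedness on the algebraic tensor products is automatic: the maps $(u^1,u^2)\mapsto T_\delta(u^1\otimes u^2)v$ and $(v^1,v^2)\mapsto T_\delta(u)(v^1\otimes v^2)$ are bilinear, so by the universal property of the algebraic tensor product they factor uniquely through $\cW\otimes\cW$ and $\cY\otimes\cY$. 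Once the estimate is established, density of $\cW\otimes\cW$ in $\cW^{(\pi)}$ and of $\cY\otimes\cY$ in $\cY^{(\varepsilon)}$ yields the unique continuous extension.

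The decisive step is the pointwise estimate for an elementary tensor $u^1\otimes u^2$. Fix $t\in\bbT$. By Lemma~\ref{lem:C0Y}, for any nonzero $\varphi\in H$ the linear functional $w\mapsto\hip{\varphi}{w(t)}/\hnorm{\varphi}$ lies in the closed unit ball $B_{C^0(\bbT;H)^\prime}$. Applying this with $\varphi=u^1(t)$ and $\varphi=u^2(t)$, and combining the definition of the injective norm with Lemma~\ref{lem:C0Yeps}, gives for every $v=\sum_{\ell=1}^M v_\ell^1\otimes v_\ell^2\in\cY\otimes\cY$,
$$\Bigl|\sum_{\ell=1}^M \hip{u^1(t)}{v_\ell^1(t)}\hip{u^2(t)}{v_\ell^2(t)}\Bigr|\le \hnorm{u^1(t)}\,\hnorm{u^2(t)}\,\|v\|_{\cY^{(\varepsilon)}}.$$
Integrating in $t$ and applying Cauchy--Schwarz in $L^2(\bbT;\bbR)$ yields $|T_\delta(u^1\otimes u^2)v|\le\|u^1\|_\cW\|u^2\|_\cW\|v\|_{\cY^{(\varepsilon)}}$. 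Summing over an arbitrary representation $u=\sum_{k=1}^N u_k^1\otimes u_k^2$ by the triangle inequality and taking the infimum over representations produces the global bound $\|T_\delta\|_{\cL(\cW^{(\pi)};\cY^{(\varepsilon)\prime})}\le 1$.

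The three remaining assertions follow the same recipe applied to elementary tensors of mixed type. For $u\otimes\varphi\in\cW\otimes H$ I would regard $\varphi$ as the constant function $t\mapsto\varphi$ in $C^0(\bbT;H)$; the pointwise bound $\hnorm{u(t)}\hnorm{\varphi}\|v\|_{\cY^{(\varepsilon)}}$ then integrates via Cauchy--Schwarz against the constant function $1$ to $|T_\delta(u\otimes\varphi)v|\le\sqrt{T}\,\|u\|_\cW\hnorm{\varphi}\|v\|_{\cY^{(\varepsilon)}}$, and the analogous argument for $H\otimes\cW$ and $\varphi^1\otimes\varphi^2\in H\otimes H$ yields the additional constants $\sqrt{T}$ and $T$, respectively. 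Density of the algebraic tensor products in $\cW\xpi H$, $H\xpi\cW$, and $H^{(\pi)}$ then completes the extension. The only genuine obstacle is organizing the point-evaluation argument so that it passes cleanly through the chain of embeddings $\cY\hookrightarrow C^0(\bbT;H)$ and its tensorial counterpart in Lemma~\ref{lem:C0Yeps}; once this bridge is in place, the remaining estimates are routine.
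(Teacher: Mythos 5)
Your proposal is correct and follows essentially the same route as the paper: the pointwise-in-$t$ estimate via normalized evaluation functionals lying in $B_{C^0(\bbT;H)^\prime}$ (hence in $B_{\cY^\prime}$ by Lemma~\ref{lem:C0Y}), bounded by the injective norm, followed by Cauchy--Schwarz in $t$ and the infimum over representations, with the constants $\sqrt{T}$ and $T$ appearing in the mixed cases exactly as in the paper. The only cosmetic difference is that you estimate elementary tensors first and sum by the triangle inequality, whereas the paper works directly with a general representation.
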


\begin{proof}
    Let $u\in \cW\otimes\cW$ and $v \in\cY\otimes\cY$ with representations
    $u = \sum\nolimits_{k=1}^{N} u_k^1 \otimes u_k^2$ and
    $v = \sum\nolimits_{\ell=1}^{M} v_\ell^1 \otimes v_\ell^2$ be given. 
    Then,
    \begin{align*}
        |T_\delta(u)v | &= \Bigl| \sum\limits_{k=1}^{N} \sum\limits_{\ell=1}^{M} \int_0^T \hip{u_k^1(t)}{v_\ell^1(t)} \hip{u_k^2(t)}{v_\ell^2(t)} \, \rd t \Bigr| \\
            &\leq \sum\limits_{k=1}^{N} \int_0^T \hnorm{u_k^1(t)} \hnorm{u_k^2(t)}
                \Bigl| \sum\limits_{\ell=1}^{M} \frac{\hip{u_k^1(t)}{v_\ell^1(t)}}{\hnorm{u_k^1(t)}}
                                                \frac{\hip{u_k^2(t)}{v_\ell^2(t)}}{\hnorm{u_k^2(t)}} \Bigr| \,\rd t \\
            &\leq \sum\limits_{k=1}^{N} \int_0^T \hnorm{u_k^1(t)} \hnorm{u_k^2(t)}
                                \Bigl\| \sum\limits_{\ell=1}^{M} v_\ell^1(t) \otimes v_\ell^2(t) \Bigr\|_{H^{(\varepsilon)}} \,\rd t,
    \intertext{since $\tfrac{\langle\varphi, \cdot\rangle_H}{\|\varphi\|_H} \in B_{H^\prime}$ for $\varphi\in H\setminus\{0\}$. Therefore,}
        |T_\delta(u)v |
            &\leq \sup\limits_{t\in\bbT} \Bigl\| \sum\limits_{\ell=1}^{M} v_\ell^1(t) \otimes v_\ell^2(t) \Bigr\|_{H^{(\varepsilon)}}
                                \sum\limits_{k=1}^{N} \int_0^T \hnorm{u_k^1(t)} \hnorm{u_k^2(t)} \, \rd t \\
            &\leq \sup\limits_{t\in\bbT} \sup\limits_{f,g\in B_{H^\prime}} \Bigl| \sum\limits_{\ell=1}^{M} f\bigl(v_\ell^1(t)\bigr) \, g\bigl(v_\ell^2(t)\bigr) \Bigr| \,
                                \sum\limits_{k=1}^{N} \| u_k^1 \|_\cW \|u_k^2 \|_\cW \\
            &\leq \sup\limits_{s,t\in\bbT} \sup\limits_{f,g\in B_{H^\prime}} \Bigl| \sum\limits_{\ell=1}^{M} f\bigl(\delta_s(v_\ell^1)\bigr) \,
                                                                                                             g\bigl(\delta_t(v_\ell^2)\bigr) \Bigr| \,
                                \sum\limits_{k=1}^{N} \| u_k^1 \|_\cW \|u_k^2 \|_\cW,
    \end{align*}
    where $\delta_t \colon C^0(\bbT;H) \to H$ denotes the evaluation functional in $t\in\bbT$, i.e.,
    $\delta_t(v) := v(t)$. We obtain the estimate
    \[
        |T_\delta(u)v |
             \leq \sup\limits_{\tilde{f},\tilde{g}\in B_{C^0(\bbT;H)^\prime}} \Bigl| \sum\limits_{\ell=1}^{M} f\bigl(v_\ell^1\bigr) \, g\bigl(v_\ell^2\bigr) \Bigr| \,
                                \sum\limits_{k=1}^{N} \| u_k^1 \|_\cW \|u_k^2 \|_\cW,
    \]
    because $f \circ \delta_t \in B_{C^0(\bbT; H)^\prime}$ for $f \in B_{H^\prime}$ and $t\in\bbT$. Hence,
    \[
        |T_\delta(u)v |
             \leq \| v\|_{C^0(\bbT;H)^{(\varepsilon)}}  \| u\|_{\cW^{(\pi)}}
             \leq \| v\|_{\cY^{(\varepsilon)}} \| u\|_{\cW^{(\pi)}},
    \]
    since $\cY^{(\varepsilon)} \hookrightarrow C^0(\bbT;H)^{(\varepsilon)}$
    with embedding constant 1 by Lemma~\ref{lem:C0Yeps},
    and $T_\delta$ admits a unique extension
    $T_\delta \in \cL(\cW^{(\pi)}; \cY^{(\varepsilon)\prime})$. 

    For $\widetilde{u}\in \cW\otimes H$ and $\hat{u}\in H\otimes\cW$ with representations
    $\sum\nolimits_{k=1}^{N} u_k^1 \otimes \varphi_k$ and
    $\sum\nolimits_{k=1}^{N} \varphi_k \otimes u_k^2$, respectively,
    one can prove in the same way as above that
    \[
        |T_\delta(\widetilde{u})v |
            \leq \sqrt{T} \, \| v\|_{\cY^{(\varepsilon)}} \|\widetilde{u}\|_{\cW \xpi H}, \quad
        |T_\delta(\hat{u})v |
            \leq \sqrt{T} \, \| v\|_{\cY^{(\varepsilon)}} \|\hat{u} \|_{H \xpi \cW}
    \]
    for all $v\in\cY^{(\varepsilon)}$.
    Finally, for $\varphi\in H\otimes H$ with representation
    $\sum\nolimits_{k=1}^{N} \varphi_k^1 \otimes \varphi_k^2$
    we obtain for all $v\in\cY^{(\varepsilon)}$
    \[
        |T_\delta(\varphi)v | \leq T \, \| v\|_{\cY^{(\varepsilon)}} \|\varphi \|_{H^{(\pi)}}.
    \]
    The last three estimates show that there exist unique extensions
    $T_\delta \in \cL(\cW \xpi H; \cY^{(\varepsilon)\prime})$,
    $T_\delta \in \cL(H \xpi \cW; \cY^{(\varepsilon)\prime})$, and
    $T_\delta \in \cL(H^{(\pi)}; \cY^{(\varepsilon)\prime})$
    and complete the proof. 
\end{proof}

In addition to $T_\delta$ we define the operator
$R_{t}\colon H \to \cY^\prime$ for $t\in\bbT$ by
\begin{equation}\label{eq:Rt}
    R_{t}(\varphi)v := \hip{\varphi}{v(t)}, \quad v\in\cY.
\end{equation}
The next lemma shows that we obtain a well-defined operator $R_{s,t} \in \cL(H^{(\pi)}; \cY^{(\varepsilon)\prime})$
by setting $R_{s,t} := R_s \xpi R_t$ for $s,t\in\bbT$.

\begin{lemma}\label{lem:Rst}
    The operator $R_t$ defined for $t\in\bbT$ in \eqref{eq:Rt} is bounded and satisfies
    $\|R_t\|_{\cL(H;\cY^\prime)} \leq 1$.
    Furthermore, for $s,t\in\bbT$ the operator $R_{s,t}: H\otimes H \to \cY^\prime \otimes \cY^\prime$
    defined for $\varphi\in H\otimes H$ by
    \begin{equation}\label{eq:Rst}
        R_{s,t} (\varphi) := (R_s \otimes R_t) (\varphi) = \sum\limits_{k=1}^{N} R_s(\varphi_k^1) \otimes R_t(\varphi_k^2) ,
    \end{equation}
    where $\sum\nolimits_{k=1}^{N} \varphi_k^1 \otimes \varphi_k^2$ is a representation of $\varphi\in H\otimes H$,
    admits a unique extension to a bounded linear operator $R_{s,t} \in \cL(H^{(\pi)}; \cY^{(\varepsilon)\prime})$.
\end{lemma}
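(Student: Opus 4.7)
\medskip

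My plan is to split the proof into the two assertions of the lemma. For the first statement, I would prove directly from the definition that $R_t$ is bounded with norm at most one: for any $\varphi \in H$ and $v \in \cY$, Cauchy--Schwarz in $H$ and the continuous embedding $\cY \hookrightarrow C^0(\bbT;H)$ with embedding constant $1$ from Lemma~\ref{lem:C0Y} yield
\[
    |R_t(\varphi)v| = |\hip{\varphi}{v(t)}| \leq \hnorm{\varphi}\,\hnorm{v(t)}
         \leq \hnorm{\varphi} \sup_{s\in\bbT} \hnorm{v(s)} \leq \hnorm{\varphi}\, \ynorm{v},
\]
from which $\|R_t\|_{\cL(H;\cY^\prime)} \leq 1$ follows immediately.

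For the second statement, the strategy is to combine Lemma~\ref{lem:tensor}~\ref{lem:tensor-i} (boundedness of the projective tensor product of bounded linear maps) with the isometric duality from Lemma~\ref{lem:dualequal}. Since $R_s, R_t \in \cL(H;\cY^\prime)$, Lemma~\ref{lem:tensor}~\ref{lem:tensor-i} produces a unique bounded extension $R_s \xpi R_t \in \cL(H \xpi H;\; \cY^\prime \xpi \cY^\prime)$ with operator norm equal to $\|R_s\|_{\cL(H;\cY^\prime)} \|R_t\|_{\cL(H;\cY^\prime)} \leq 1$, and this extension acts as $R_s \otimes R_t$ on the algebraic tensor product $H \otimes H$, which is precisely the definition~\eqref{eq:Rst} of $R_{s,t}$.

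It then remains to identify the target $\cY^\prime \xpi \cY^\prime$ with the space $\cY^{(\varepsilon)\prime}$ in which the lemma claims $R_{s,t}$ lives. Here I would invoke Lemma~\ref{lem:dualequal} applied to $H_1 = H_2 = \cY$, which gives the isometric isomorphism $(\cY \xeps \cY)^\prime \cong \cY^\prime \xpi \cY^\prime$. To apply this lemma one needs $\cY$ to be a separable Hilbert space; $\cY$ is Hilbert by construction, and separability follows because $V$ and $V^*$ are separable (as $H$ is separable), so that $L^2(\bbT; V)$ and $H^1_{0,\{T\}}(\bbT; V^*)$ are separable and hence so is their intersection. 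Composing the extension $R_s \xpi R_t$ with this isometric isomorphism produces the desired $R_{s,t} \in \cL(H^{(\pi)}; \cY^{(\varepsilon)\prime})$ with $\|R_{s,t}\|_{\cL(H^{(\pi)}; \cY^{(\varepsilon)\prime})} \leq 1$.

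The only potential subtlety is purely bookkeeping: one must check that the composition of Lemmas~\ref{lem:tensor}~\ref{lem:tensor-i} and~\ref{lem:dualequal} actually agrees with~\eqref{eq:Rst} on the algebraic tensor product $H \otimes H$. This is immediate once one unwinds the identifications --- on elementary tensors $\varphi_1 \otimes \varphi_2$ both constructions give the bilinear functional $(v_1, v_2) \mapsto \hip{\varphi_1}{v_1(s)}\hip{\varphi_2}{v_2(t)}$ --- so uniqueness of continuous extensions closes the argument. I do not expect any genuine obstacle; the result is essentially a routine application of the two earlier lemmas, which is presumably why it is stated without much preparation at this point in the paper.
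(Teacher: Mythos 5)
Your proposal is correct and follows essentially the same route as the paper's own proof: the same Cauchy--Schwarz estimate combined with Lemma~\ref{lem:C0Y} for the bound on $R_t$, then Lemma~\ref{lem:tensor}~\ref{lem:tensor-i} for the extension to $\cL(H \xpi H; \cY^\prime \xpi \cY^\prime)$, followed by the identification of $\cY^\prime \xpi \cY^\prime$ with $\cY^{(\varepsilon)\prime}$ via Lemma~\ref{lem:dualequal}. Your added remarks on the separability of $\cY$ and on checking agreement on elementary tensors are sensible bookkeeping that the paper leaves implicit.
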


\begin{proof}
    For $t\in\bbT$ and $\varphi\in H$ we calculate by using
    the Cauchy--Schwarz inequality and Lemma~\ref{lem:C0Y},
    \[
        | R_t(\varphi)v| = | \hip{\varphi}{v(t)} |
            \leq \hnorm{\varphi} \hnorm{v(t)}
            \leq \hnorm{\varphi} \| v\|_{C^0(\bbT;H)}
            \leq \hnorm{\varphi} \ynorm{v}
    \]
    for all $v\in\cY$. This proves that $R_t(\varphi) \in \cY^\prime$ for all $\varphi \in H$
    with $\|R_t(\varphi)\|_{\cY^\prime} \leq \hnorm{\varphi}$,
    which implies the assertion $R_t\in\cL(H;\cY^\prime)$ with $\|R_t\|_{\cL(H;\cY^\prime)} \leq 1$ for all $t\in\bbT$.

    By Lemma~\ref{lem:tensor}~(i) there exists a unique continuous extension
    $R_{s,t} \in \cL(H \xpi H; \cY^\prime \xpi \cY^\prime)$ of $R_{s,t} \colon \cX\otimes\cX \to \cY^\prime \otimes \cY^\prime$
    defined in \eqref{eq:Rst} for $s$, $t\in\bbT$. The fact that $\cY^{(\varepsilon)\prime}$ is isometrically isomorphic to $\cY^\prime \xpi \cY^\prime$,
    cf.~Lemma~\ref{lem:dualequal}, completes the proof.
\end{proof}

\subsection{A weak It\^{o} isometry}\label{subsec:weakito}

In this subsection the diagonal trace operator is used to formulate
an isometry for the expectation of
the product of two weak stochastic integrals driven by the same L\'evy process.
This isometry is an essential component in the derivation of the deterministic variational
problems for the second moment and the covariance in Sections~\ref{section:moment} and~\ref{section:covariance}.

\begin{lemma}\label{lem:weakito}
    For a predictable process
    $\Phi \in L^2(\Omega\times\bbT; \cL(U;H))$ 
    and the covariance kernel $q \in U^{(2)}$ in~\eqref{eq:qdef}
    the function $\bbE[\Phi(\cdot)\otimes\Phi(\cdot)] q$ is a
    well-defined element in the space $\cW^{(\pi)}$.
    The weak stochastic integral, cf.~\eqref{eq:weakstochint},
    satisfies, for $v_1, v_2 \in \cY$,
    \begin{align*}
        \bbE\biggl[ \int_{0}^{T} \hip{v_1(s)}{\Phi(s) \, \rd L(s)}
                &\int_{0}^{T} \hip{v_2(t)}{\Phi(t) \, \rd L(t)} \biggr]  \\
                &= T_\delta(\bbE[\Phi(\cdot)\otimes\Phi(\cdot)] q)(v_1 \otimes v_2).
    \end{align*}
\end{lemma}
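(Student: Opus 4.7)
My plan is to reduce the identity to a pointwise-in-$\omega$ application of the definition of $T_\delta$, followed by commuting the expectation with the bounded operator $T_\delta$ furnished by Proposition~\ref{prop:deltaggq}.

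First I would verify that $\bbE[(\Phi(\cdot)\otimes\Phi(\cdot))q]$ is well-defined as an element of $\cW^{(\pi)}$. The key observation is that, $\bbP$-a.s., $\Phi(\omega,\cdot)$ can be viewed as a bounded operator from $U$ into $\cW$ with
\[
    \|\Phi(\omega,\cdot)\|_{\cL(U;\cW)}^2 \le \int_0^T \|\Phi(\omega,t)\|_{\cL(U;H)}^2 \, \rd t,
\]
so Lemma~\ref{lem:tensor}\ref{lem:tensor-i} provides a unique bounded extension $\Phi(\omega,\cdot)\xpi\Phi(\omega,\cdot)\in\cL(U^{(\pi)};\cW^{(\pi)})$. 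Applied to $q\in U^{(\pi)}$ (Lemma~\ref{lem:qupiconv}) this yields $(\Phi\otimes\Phi)q\in\cW^{(\pi)}$ pathwise, and Bochner integrability follows since $\bbE\|(\Phi\otimes\Phi)q\|_{\cW^{(\pi)}}\le\|q\|_{U^{(\pi)}}\,\bbE\int_0^T\|\Phi(t)\|_{\cL(U;H)}^2\,\rd t<\infty$.

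Next I would apply the scalar It\^o isometry. By~\eqref{eq:weakstochint} each weak integral is of the form $\int_0^T \Psi_i(t)\,\rd L(t)$ with $\Psi_i(t)(z):=\hip{v_i(t)}{\Phi(t)z}$ an element of $\lhs(\cH;\bbR)$. The polarised version of the $\bbR$-valued It\^o isometry (Corollary~8.17 of~\cite{peszat2007}) then gives
\[
    \bbE\left[\int_0^T \Psi_1\,\rd L\cdot\int_0^T \Psi_2\,\rd L\right] = \bbE\int_0^T \langle \Psi_1(t),\Psi_2(t)\rangle_{\lhs(\cH;\bbR)}\,\rd t.
\]
Expanding the Hilbert--Schmidt inner product in the orthonormal basis $\{f_n:=\gamma_n^{1/2}e_n:\gamma_n\ne 0\}$ of $\cH$, the right-hand side equals $\bbE\int_0^T\sum_n\hip{v_1(t)}{\Phi(t)f_n}\hip{v_2(t)}{\Phi(t)f_n}\,\rd t$.

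Finally I would identify this with $T_\delta(\bbE[(\Phi\otimes\Phi)q])(v_1\otimes v_2)$. With the truncations $q_M:=\sum_{m=1}^M f_m\otimes f_m$ from the proof of Lemma~\ref{lem:qupiconv}, the operator built in the first step satisfies $(\Phi\otimes\Phi)q_M=\sum_{m=1}^M\Phi(\cdot)f_m\otimes\Phi(\cdot)f_m\in\cW\otimes\cW$, so the definition~\eqref{eq:Tdelta} gives
\[
    T_\delta((\Phi\otimes\Phi)q_M)(v_1\otimes v_2) = \sum_{m=1}^M\int_0^T\hip{\Phi(t)f_m}{v_1(t)}\hip{\Phi(t)f_m}{v_2(t)}\,\rd t.
\]
Continuity of $T_\delta$ on $\cW^{(\pi)}$ (Proposition~\ref{prop:deltaggq}) and of $\Phi\otimes\Phi$ on $U^{(\pi)}$, together with $q_M\to q$ in $U^{(\pi)}$, permit passage to the limit $M\to\infty$ to identify the infinite series with $T_\delta((\Phi\otimes\Phi)q)(v_1\otimes v_2)$ pathwise; commuting $\bbE$ with the bounded functional $T_\delta(\cdot)(v_1\otimes v_2)$ closes the loop. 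The main obstacle is the bookkeeping of Fubini-type and dominated convergence arguments in the last two steps: one must simultaneously exchange $\bbE$, $\sum_n$ and $\int_0^T$, and justify passage to the limit $M\to\infty$ both $\bbP$-a.s.\ and under expectation. All relevant quantities are, however, dominated by the finite bound $\|q\|_{U^{(\pi)}}\,\bbE\int_0^T\|\Phi(t)\|_{\cL(U;H)}^2\,\rd t$ established above, so this is a lengthy but routine verification rather than a conceptual difficulty.
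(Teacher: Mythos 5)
Your proposal is correct and follows essentially the same route as the paper: the pathwise bound $\|\Phi(\omega,\cdot)\|_{\cL(U;\cW)}^2\le\int_0^T\|\Phi(\omega,t)\|_{\cL(U;H)}^2\,\rd t$ to place $\bbE[\Phi\otimes\Phi]q$ in $\cW^{(\pi)}$, then the polarised It\^o isometry and expansion of the $\lhs(\cH;\bbR)$ inner product in the basis $(f_n)$, and finally identification with $T_\delta$. The only (immaterial) difference is that the paper performs the last identification directly via the $H^{(2)}$ inner product $\htip{v_1(t)\otimes v_2(t)}{[\Phi(t)\otimes\Phi(t)]q}$, whereas you truncate $q$ to $q_M$ and pass to the limit using the continuity of $T_\delta$ from Proposition~\ref{prop:deltaggq}.
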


\begin{proof}
    In order to prove that $\bbE[\Phi(\cdot)\otimes\Phi(\cdot)] q$ is a
    well-defined element in the space $\cW^{(\pi)}$, it suffices to show
    that $\Phi(\cdot)\otimes\Phi(\cdot) \in L^1(\Omega; \cL(U^{(\pi)}; \cW^{(\pi)}))$,
    and, hence, $\bbE[\Phi(\cdot)\otimes\Phi(\cdot)] \in \cL(U^{(\pi)}; \cW^{(\pi)})$,
    since $q\in U^{(\pi)}$ by Lemma~\ref{lem:qupiconv}.
    To this end, we estimate
    \begin{align*}
        \| \Phi(\cdot)&\otimes\Phi(\cdot) \|_{L^1(\Omega;\cL(U^{(\pi)};\cW^{(\pi)}))}
             = \bbE \left[\|\Phi(\cdot)\otimes\Phi(\cdot)\|_{\cL(U^{(\pi)};\cW^{(\pi)})}\right]
             = \bbE \left[\|\Phi(\cdot)\|_{\cL(U;\cW)}^2 \right] \\
            &= \bbE\left[ \sup\limits_{\substack{\psi\in U \\ \| \psi\|_U = 1}}
                    \int_0^T \|\Phi(t) \psi\|_H^2 \, \rd t \right]
             \leq \bbE\left[ \int_0^T
                    \sup\limits_{\substack{\psi\in U \\ \| \psi\|_U = 1}} \|\Phi(t) \psi\|_H^2 \, \rd t \right] \\
            &= \bbE \left[ \int_0^T \| \Phi(t) \|_{\cL(U;H)}^2 \, \rd t \right]
             = \| \Phi \|_{L^2(\Omega\times\bbT;\cL(U;H))}^2 < +\infty.
    \end{align*}
    In order to justify that the weak stochastic integrals
    are well-defined, we note that the following embedding holds,
    \[
        L^2(\Omega\times\bbT; \cL(U;H)) \hookrightarrow L^2(\Omega\times\bbT; \lhs(\cH;H))
    \]
    with embedding constant $\sqrt{\tr(Q)} < +\infty$, since
    \begin{align*}
        \|\Phi\|_{L^2(\Omega\times\bbT; \lhs(\cH;H))}^2
            &= \bbE \int_0^T \| \Phi(t) \|_{\lhs(\cH; H)}^2 \, \rd t
             = \bbE \int_0^T \sum\limits_{j\in\cI} \| \Phi(t) f_j \|_{H}^2 \, \rd t \\
            &\leq \bbE \int_0^T \sum\limits_{j\in\cI} \| \Phi(t) \|_{\cL(U;H)}^2 \|f_j \|_{U}^2 \, \rd t \\
            &= \tr(Q) \, \bbE \int_0^T \| \Phi(t) \|_{\cL(U;H)}^2 \, \rd t
             = \tr(Q) \, \|\Phi\|_{L^2(\Omega\times\bbT; \cL(U;H))}^2,
    \end{align*}
    where $f_n := \sqrt{\gamma_n} \, e_n$ and $\cI := \{j\in\bbN : \gamma_j \neq 0\}$ for an eigenbasis
    $(e_n)_{n\in\bbN} \subset U$ of $Q$ with corresponding eigenvalues $(\gamma_n)_{n\in\bbN}$.
    For this reason, the weak stochastic integrals $\int\nolimits_0^T \hip{v_\ell(t)}{\Phi(t) \, \rd L(t)}$
    are well-defined for $v_\ell \in \cY \subset C^0(\bbT; H)$, $\ell \in \{1,2\}$.
    Recalling the definition of the weak stochastic integral in~\eqref{eq:weakstochint} yields the equality
    \begin{align*}
        \int_0^T \hip{v_\ell(t)}{\Phi(t) \, \rd L(t)} = \int_0^T \Psi_\ell(t) \, \rd L(t), \quad \ell=1,2,
    \end{align*}
    where for $\ell\in\{1,2\}$ the stochastic process $\Psi_\ell \in L^2(\Omega\times\bbT; \cL(U;\bbR))$ is defined by
    \begin{align*}
        \Psi_\ell(t) \colon z \mapsto \hip{v_\ell(t)}{\Phi(t) z} \quad \forall z\in \cH
    \end{align*}
    for all $t \in \bbT$.
    Applying It\^o's isometry, see \cite[Corollary~8.17]{peszat2007}, along with the polarisation
    identity, yields
    \begin{align*}
        \bbE\biggl[ \int_{0}^{T} \Psi_1(t) \, \rd L(t)
                \int_{0}^{T} \Psi_2(t) \, \rd L(t) \biggr]
                = \int_0^T \bbE\left[ \langle \Psi_1(t), \Psi_2(t) \rangle_{\lhs(\cH;\bbR)} \right] \rd t,
    \end{align*}
    where $\langle\cdot,\cdot\rangle_{\lhs(\cH;\bbR)}$ denotes the Hilbert--Schmidt inner product, i.e.,
    \begin{align*}
        \langle \tilde{\Phi}, \tilde{\Psi} \rangle_{\lhs(\cH;\bbR)}
             = \sum\limits_{n\in\bbN} \tilde{\Phi}(\tilde{f}_n) \, \tilde{\Psi}(\tilde{f}_n)
    \end{align*}
    for $\tilde\Phi$, $\tilde\Psi\in\lhs(\cH;\bbR)$,
    where $(\tilde{f}_n)_{n\in\bbN}$ is an orthonormal basis of $\cH$.
    By choosing the orthonormal basis $(f_j)_{j\in\cI}$ from above we obtain
    {\allowdisplaybreaks
    \begin{align*}
        \bbE\biggl[ \int_{0}^{T} \hip{v_1(s)}{&\, \Phi(s) \, \rd L(s)}
                    \int_{0}^{T} \hip{v_2(t)}{\Phi(t) \, \rd L(t)} \biggr]  \\
            &= \int_0^T \bbE\left[ \langle \Psi_1(t), \Psi_2(t) \rangle_{\lhs(\cH;\bbR)} \right] \, \rd t \\
            &= \int_0^T \bbE\biggl[ \sum\limits_{j\in\cI} \hip{v_1(t)}{\Phi(t) f_j} \hip{v_2(t)}{\Phi(t) f_j} \biggr] \, \rd t\\
            &= \int_0^T \bbE\biggl[ \sum\limits_{n\in\bbN} \gamma_n \hip{v_1(t)}{\Phi(t) e_n} \hip{v_2(t)}{\Phi(t) e_n} \biggr] \, \rd t\\
            &= \int_0^T \bbE\biggl[ \sum\limits_{n\in\bbN} \htip{v_1(t)\otimes v_2(t)}{[\Phi(t) \otimes \Phi(t)] \gamma_n (e_n \otimes e_n)} \biggr] \, \rd t \\
            &= \int_0^T \bbE\biggl[ \langle v_1(t)\otimes v_2(t),
                            [\Phi(t) \otimes \Phi(t)] \sum\limits_{n\in\bbN} \gamma_n (e_n \otimes e_n) \rangle_{H^{(2)}} \biggr] \, \rd t \\
            &= \int_0^T \bbE\left[ \htip{v_1(t)\otimes v_2(t)}{[\Phi(t) \otimes \Phi(t)] q} \right] \, \rd t.
    \end{align*} }
    By Proposition~\ref{prop:deltaggq} the diagonal trace
    $T_\delta(\bbE[\Phi(\cdot) \otimes \Phi(\cdot)] q)(v_1\otimes v_2)$
    is well-defined, since $\bbE[\Phi(\cdot) \otimes \Phi(\cdot)] q \in \cW^{(\pi)}$.
    With the introduced notion of the operator $T_\delta$ we can rewrite the above expression as
    \begin{align*}
      \bbE&\biggl[ \int_{0}^{T} \hip{v_1(s)}{\Phi(s) \, \rd L(s)}
                   \int_{0}^{T} \hip{v_2(t)}{\Phi(t) \, \rd L(t)} \biggr]  \\
            &= \int_0^T \htip{v_1(t)\otimes v_2(t)}{\bbE[\Phi(t) \otimes \Phi(t)] q} \, \rd t
             = T_\delta(\bbE[\Phi(\cdot) \otimes \Phi(\cdot)] q)(v_1\otimes v_2),
    \end{align*}
    which completes the proof.
\end{proof}

\section{The second moment}\label{section:moment}

After having introduced the stochastic partial differential equation of interest and its mild solution
in Section~\ref{section:spde}, the aim of this section is to derive a well-posed deterministic
variational problem, which is satisfied by the second moment of the mild solution.

The second moment of a random variable $Y\in L^2(\Omega; H_1)$
taking values in a Hilbert space $H_1$ is denoted
by $\bbM^{(2)}Y := \bbE[ Y\otimes Y]$. 
We recall the Bochner spaces $\cW = L^2(\bbT; H)$, $\cX = L^2(\bbT; V)$
and $\cY = L^2(\bbT; V) \cap H_{0,\{T\}}^1(\bbT;V^*)$.
It follows immediately from the definition of the mild solution that
its second moment is an element of the tensor space $\cW^{(2)}$.
Under the assumptions made above we can prove even more regularity.

\begin{theorem}\label{thm:regularsecond} 
    Let Assumption~\ref{ass:spde}~\ref{ass:spde-i}--\ref{ass:spde-iv} be satisfied. Then the second moment of the
    mild solution $X$ defined in~\eqref{eq:mildsol} satisfies $\bbM^{(2)}X \in \cX^{(\pi)} = \cX \xpi \cX$.
\end{theorem}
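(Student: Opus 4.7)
The plan is to prove the stronger statement that $X\in L^2(\Omega;\cX)$, i.e.\ that $\mathbb{E}\int_0^T\|X(t)\|_V^2\,\rd t<+\infty$, and then deduce the claim by a Bochner integral argument in the projective tensor product. Indeed, the map $v\mapsto v\otimes v\colon \cX\to\cX^{(\pi)}$ satisfies $\|v\otimes v\|_{\cX^{(\pi)}}=\|v\|_\cX^2$, so composing with $X$ yields a strongly measurable map $\omega\mapsto X(\omega)\otimes X(\omega)\in\cX^{(\pi)}$ whose norm is integrable as soon as $X\in L^2(\Omega;\cX)$. The Bochner integral $\mathbb{E}[X\otimes X]=\bbM^{(2)}X$ then lies in $\cX^{(\pi)}$ with $\|\bbM^{(2)}X\|_{\cX^{(\pi)}}\leq\mathbb{E}\|X\|_\cX^2$. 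Thus the theorem reduces to the integral bound stated above.

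Using the mild solution formula \eqref{eq:mildsol}, I split $X(t)=S(t)X_0+\int_0^t S(t-s)G(X(s))\,\rd L(s)$ and estimate the two contributions to $\mathbb{E}\int_0^T\|X(t)\|_V^2\,\rd t=\mathbb{E}\int_0^T\|A^{1/2}X(t)\|_H^2\,\rd t$ separately. For the deterministic term, the spectral decomposition of $A$ gives the elementary bound
\[
    \int_0^T\|A^{1/2}S(t)\varphi\|_H^2\,\rd t=\sum_n\lambda_n|\langle\varphi,e_n\rangle|^2\int_0^T e^{-2\lambda_n t}\,\rd t\leq\tfrac{1}{2}\|\varphi\|_H^2
\]
for every $\varphi\in H$, hence $\mathbb{E}\int_0^T\|S(t)X_0\|_V^2\,\rd t\leq\tfrac{1}{2}\mathbb{E}\|X_0\|_H^2<+\infty$ by Assumption~\ref{ass:spde}~\ref{ass:spde-i}. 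For the stochastic term, closedness of $A^{1/2}$, It\^o's isometry for L\'evy processes (\cite[Corollary~8.17]{peszat2007}), and the splitting $G=G_1(\cdot)+G_2$ yield
\[
    \mathbb{E}\Bigl\|\int_0^t A^{1/2}S(t-s)G(X(s))\,\rd L(s)\Bigr\|_H^2
    \leq 2\,\mathbb{E}\!\int_0^t\|A^{\tfrac{1}{2}}S(t-s)G_1\|_{\cL(\Hr;\lhs)}^2\|X(s)\|_{\Hr}^2\,\rd s
    +2\int_0^t\|A^{\tfrac{1}{2}}S(t-s)G_2\|_{\lhs}^2\,\rd s.
\]
Integrating in $t$, applying Tonelli, substituting $\tau=t-s$, and exploiting $\sup_{s\in\bbT}\mathbb{E}\|X(s)\|_{\Hr}^2<+\infty$ from Theorem~\ref{thm:exunimild} gives, for the $G_1$ contribution, the bound $T\sup_s\mathbb{E}\|X(s)\|_{\Hr}^2\int_0^T\|A^{1/2}S(\tau)G_1\|^2_{\cL(\Hr;\lhs)}\,\rd\tau$, finite by Assumption~\ref{ass:spde}~\ref{ass:spde-iv}.

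The non-trivial step is the $G_2$ contribution, since the pointwise estimate $\|A^{1/2}S(\tau)\|_{\cL(H)}\lesssim\tau^{-1/2}$ gives a non-integrable bound $\tau^{-1}\|G_2\|_{\lhs}^2$. I circumvent this by summing first over an orthonormal basis $(f_n)$ of $\cH$: the same spectral argument as above yields, for each $n$, $\int_0^T\|A^{1/2}S(\tau)G_2 f_n\|_H^2\,\rd\tau\leq\tfrac{1}{2}\|G_2 f_n\|_H^2$, so summing in $n$ and invoking Tonelli produces $\int_0^T\|A^{1/2}S(\tau)G_2\|_{\lhs}^2\,\rd\tau\leq\tfrac{1}{2}\|G_2\|_{\lhs}^2<+\infty$, which is the main obstacle and yields the required bound on the $G_2$ part after integrating once more in $t$. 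Collecting the three estimates proves $X\in L^2(\Omega;\cX)$ and completes the argument.
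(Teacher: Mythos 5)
Your proof is correct and follows essentially the same route as the paper's: the reduction to $\bbE\|X\|_{\cX}^2<+\infty$ via $\|\bbM^{(2)}X\|_{\cX^{(\pi)}}\leq\bbE\|X\otimes X\|_{\cX^{(\pi)}}$, the splitting via the mild solution formula and It\^o's isometry, the smoothing inequality $\int_0^T\|A^{1/2}S(t)\varphi\|_H^2\,\rd t\leq\tfrac12\|\varphi\|_H^2$, and in particular the summation over an orthonormal basis of $\cH$ to handle the $G_2$ term are all exactly the paper's argument. No gaps.
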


\begin{proof}
    First, we remark that
    \[
        \| \bbM^{(2)} X \|_{\cX^{(\pi)}} = \| \bbE[X \otimes X] \|_{\cX^{(\pi)}} \leq \bbE \| X \otimes X \|_{\cX^{(\pi)}} = \bbE\left[\| X \|_{\cX}^2\right].
    \]
    Hence, we may estimate as follows:
    \begin{align*}
        \| & \bbM^{(2)} X \|_{\cX^{(\pi)}}
             \leq \bbE \int_0^T \Bigl \| S(t) X_0 + \int_0^t S(t-s) G(X(s)) \, \rd L(s) \Bigr \|_{V}^2 \, \rd t \\
            &\leq 2 \, \bbE \int_0^T \left[ \| S(t) X_0 \|_{V}^2 + \Bigl \| \int_0^t S(t-s) G(X(s)) \, \rd L(s) \Bigr\|_{V}^2 \right] \, \rd t \\
            &= 2 \, \bbE\biggl[ \int_0^T \| A^{\frac{1}{2}} S(t) X_0\|_H^2 \, \rd t \biggr]
              + 2  \int_0^T \bbE \, \Bigl\| \int_0^t A^{\frac{1}{2}} S(t-s) G(X(s)) \, \rd L(s) \Bigr\|_H^2 \, \rd t.
    \end{align*}
    Since the generator $-A$ of the semigroup $(S(t), t\geq 0)$
    is self-adjoint and negative definite, we can bound the first
    integral from above by using the inequality
    \begin{equation}\label{eq:half}
        \int_0^T \| A^{\frac{1}{2}} S(t) \varphi \|_H^2 \, \rd t \leq \frac{1}{2} \| \varphi\|_H^2, \qquad \varphi\in H,
    \end{equation}
    and for the second term we use It\^o's isometry, cf.~\cite[Corollary~8.17]{peszat2007},
    as well as the affine structure of the operator $G$ to obtain
    \begin{align*}
        \| \bbM^{(2)} X \|_{\cX^{(\pi)}}
                &\leq \bbE \| X_0\|_H^2 + 2 \int_0^T \bbE \int_0^t \| A^{\frac{1}{2}} S(t-s) G(X(s)) \|_{\lhs(\cH; H)}^2 \, \rd s \, \rd t \\
                &\leq \bbE \| X_0\|_H^2
                    + 4 \int_0^T \int_0^t \| A^{\frac{1}{2}} S(t-s) G_2 \|_{\lhs(\cH; H)}^2 \, \rd s\,\rd t \\
                &\quad + 4
                    \int_0^T \bbE \int_0^t \| A^{\frac{1}{2}} S(t-s) G_1(X(s)) \|_{\lhs(\cH; H)}^2 \, \rd s\,\rd t.
    \end{align*}
    By Assumption~\ref{ass:spde}~\ref{ass:spde-i}--\ref{ass:spde-iii} as well as Theorem~\ref{thm:exunimild}
    there exists a regularity exponent $r\in[0,1]$ such that
    the mild solution satisfies $X\in L^\infty(\bbT; L^2(\Omega; \Hr))$.
    In addition, by Assumption~\ref{ass:spde}~\ref{ass:spde-iv} it holds
    that $A^{1/2} S(\cdot) G_1 \in L^2(\bbT; \cL(\Hr; \lhs(\cH; H)))$.
    Then we estimate as follows,
    \begin{align*}
        \| \bbM^{(2)} X \|_{\cX^{(\pi)}}
                &\leq \bbE \| X_0\|_H^2
                    + 4 \sum\limits_{n\in\bbN} \int_0^T \int_0^t
                                \| A^{\frac{1}{2}} S(t-s) G_2 f_n\|_{H}^2 \, \rd s\,\rd t \\
                &\quad + 4 \int_0^T \int_0^t \| A^{\frac{1}{2}} S(t-s) G_1 \|_{\cL(\Hr; \lhs(\cH; H))}^2
                           \bbE \| X(s)\|_{\Hr}^2 \, \rd s\,\rd t
    \end{align*}
    for an orthonormal basis $(f_n)_{n\in\bbN}$ of $\cH$.
    Applying~\eqref{eq:half} again
    with upper integral bound $t$ instead of $T$ yields
    \begin{align*}
        \| \bbM^{(2)} X \|_{\cX^{(\pi)}}
                &\leq \| X_0\|_{L^2(\Omega; H)}^2
                    + 2 T \| G_2 \|_{\lhs(\cH; H)}^2 \\
                &\quad + 4 T \| X \|_{L^\infty(\bbT; L^2(\Omega; \Hr))}^2
                            \|A^{\frac{1}{2}} S(\cdot) G_1\|_{L^2(\bbT; \cL(\Hr; \lhs(\cH; H)))}^2,
    \end{align*}
    which is finite under our assumptions and completes the proof.
\end{proof}

We define the bilinear form $\cB\colon \cX \times \cY \to \bbR$ by
\begin{equation}\label{eq:cB}
    \cB(u,v) := \int_0^T \vdpt{u(t)}{(-\partial_t + A^*) v(t)} \, \rd t, 
    \qquad u \in \cX, v \in \cY,
\end{equation}
and the mean function $m$ of the mild solution $X$
in~\eqref{eq:mildsol} by
\begin{equation}\label{eq:meanfct}
    m(t) := \bbE X(t) = S(t) \bbE X_0, \quad t \in \bbT.
\end{equation}
Note that due to the mean zero property of the stochastic integral
the mean function depends only on the initial value
$X_0$ and not on the operator $G$. Furthermore, applying inequality~\eqref{eq:half}
shows the regularity $m\in\cX$, and $m$ can be interpreted
as the unique function satisfying
\begin{equation}\label{eq:mvar}
    m\in\cX : \quad \cB(m,v) = \hip{\bbE X_0}{v(0)} \quad \forall v \in \cY.
\end{equation}
Well-posedness of this problem 
follows from~\cite[Theorem~2.3]{schwab2013}.

In addition, we introduce the operator $\bbB\colon\cX \to \cY^\prime$
associated with the bilinear form $\cB$, i.e., $\bbB u := \cB(u,\cdot) \in \cY^\prime$
for $u\in\cX$. Then, this linear operator is bounded, $\bbB \in \cL(\cX,\cY^\prime)$
and $\bbB \otimes \bbB \colon \cX \otimes \cX \to \cY^\prime \otimes \cY^\prime$
defined by 
\[
    (\bbB \otimes \bbB)\Bigl(\sum\limits_{k=1}^{N} u_k^1 \otimes u_k^2 \Bigr) := \sum\limits_{k=1}^{N} \bbB u_k^1 \otimes \bbB u_k^2
            = \sum\limits_{k=1}^{N} \cB(u_k^1,\cdot) \otimes \cB(u_k^2,\cdot)
\]
admits a unique extension to a bounded linear operator $\bbB^{(\pi)} \in \cL(\cX^{(\pi)}; (\cY^\prime)^{(\pi)})$
with $\bbB^{(\pi)} = \bbB \otimes \bbB$ on $\cX \otimes \cX$ and
$\|\bbB^{(\pi)}\|_{\cL(\cX^{(\pi)};(\cY^\prime)^{(\pi)})} = \|\bbB \|_{\cL(\cX;\cY^\prime)}^2$
by~Lemma~\ref{lem:tensor}~\ref{lem:tensor-i}.
With these definitions and preliminaries we are now able to show
that the second moment of the mild solution solves a
deterministic variational problem.

\begin{theorem}\label{thm:deterministic}
    Let all conditions of Assumption~\ref{ass:spde} be satisfied and let
    $X$ be the mild solution to~\eqref{eq:spdeall}.
    Then the second moment $\bbM^{(2)}X \in \cX^{(\pi)}$ solves the following variational problem
    \begin{equation} \label{eq:deterministic}
        u\in\cX^{(\pi)} : \quad \cBpi{u}{v} = f(v) \quad \forall v \in \cY^{(\varepsilon)},
    \end{equation}
    where for $u\in\cX^{(\pi)}$ and $v\in\cY^{(\varepsilon)}$
    \begin{align}
        \cBpi{u}{v} &:= \bbB^{(\pi)}(u)v - T_\delta((G_1 \otimes G_1) (u) q) v, \label{eq:cBpi} \\
        f(v)        &:= R_{0,0}\bigl(\bbM^{(2)} X_0\bigr)v
                            + T_\delta( (G_1(m) \otimes G_2) q)v \notag \\
                    &\quad  + T_\delta( (G_2 \otimes G_1(m)) q)v + T_\delta((G_2 \otimes G_2) q)v \notag
    \end{align}
    with the operators $T_\delta$ and $R_{0,0}$ defined in~\eqref{eq:Tdelta} and~\eqref{eq:Rst}
    and the mean function $m\in\cX$ in~\eqref{eq:meanfct}.
\end{theorem}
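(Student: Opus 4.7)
The plan is to start from Lemma~\ref{lem:weakequal} applied to two test functions $v_1, v_2 \in C^1_{0,\{T\}}(\bbT; \cD(A^*))$, multiply the two $\bbP$-almost sure identities pointwise, take expectation, identify the four resulting terms with the pieces of $\cBpi{\cdot}{\cdot}$ and $f$ on the pure tensor $v_1 \otimes v_2$, and finally extend the identity by density and continuity to all $v \in \cY^{(\varepsilon)}$.

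Expanding the product from Lemma~\ref{lem:weakequal} produces four summands. Taking expectation, the deterministic--deterministic summand contributes
\[
    \bbE\bigl[\hip{X_0}{v_1(0)}\hip{X_0}{v_2(0)}\bigr]
    = \htip{\bbM^{(2)}X_0}{v_1(0) \otimes v_2(0)}
    = R_{0,0}(\bbM^{(2)}X_0)(v_1 \otimes v_2)
\]
by Lemma~\ref{lem:Rst}. The two mixed terms $\bbE\bigl[\hip{X_0}{v_j(0)} \int_0^T \hip{v_k(t)}{G(X(t)) \, \rd L(t)}\bigr]$ vanish: by Theorem~\ref{thm:regularsecond} and Assumption~\ref{ass:spde}~\ref{ass:spde-v}, the integrand $G(X) = G_1(X) + G_2$ belongs to $L^2(\Omega \times \bbT; \cL(U;H))$ and consequently, via the $\sqrt{\tr(Q)}$ embedding used in the proof of Lemma~\ref{lem:weakito}, to $L^2(\Omega \times \bbT; \lhs(\cH;H))$. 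The scalar weak stochastic integral is then a genuine $(\cF_t)$-martingale with vanishing $\cF_0$-conditional expectation, and $\cF_0$-measurability of $X_0$ eliminates the cross term by conditioning.

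For the product of the two weak stochastic integrals I invoke Lemma~\ref{lem:weakito} with $\Phi = G(X)$ to obtain $T_\delta(\bbE[G(X(\cdot)) \otimes G(X(\cdot))] q)(v_1 \otimes v_2)$. Expanding $G(X) = G_1(X) + G_2$ and using linearity of $G_1$ and of expectation yields
\[
    \bbE[G_1(X) \otimes G_1(X)] = (G_1 \otimes G_1)(\bbM^{(2)}X),
    \qquad
    \bbE[G_1(X) \otimes G_2] = G_1(m) \otimes G_2,
\]
and symmetrically for $G_2 \otimes G_1(X)$, while $G_2 \otimes G_2$ is deterministic. The contributions involving $G_1(m) \otimes G_2$, $G_2 \otimes G_1(m)$, and $G_2 \otimes G_2$ are precisely the remaining three $T_\delta$ terms appearing in $f$; the contribution $T_\delta((G_1 \otimes G_1)(\bbM^{(2)}X) q)(v_1 \otimes v_2)$ is moved to the left-hand side. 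Combined with the identity $\bbE[\cB(X, v_1) \cB(X, v_2)] = \bbB^{(\pi)}(\bbM^{(2)}X)(v_1 \otimes v_2)$, which is immediate from the definition of $\bbB^{(\pi)}$ as the unique extension of $\bbB \otimes \bbB$, this produces $\cBpi{\bbM^{(2)}X}{v_1 \otimes v_2} = f(v_1 \otimes v_2)$ on pure tensors.

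It then remains to extend the identity to arbitrary $v \in \cY^{(\varepsilon)}$. Both sides are continuous linear functionals on $\cY^{(\varepsilon)}$: the left because $\bbB^{(\pi)}(\bbM^{(2)}X) \in (\cY^\prime)^{(\pi)} \cong \cY^{(\varepsilon)\prime}$ by Lemma~\ref{lem:dualequal}, and because $(G_1 \otimes G_1)(\bbM^{(2)}X) q \in \cW^{(\pi)}$ by Lemma~\ref{lem:ggqvv}~\ref{lem:ggqvv-i} so that $T_\delta$ of it lies in $\cY^{(\varepsilon)\prime}$ by Proposition~\ref{prop:deltaggq}; the right by Lemmas~\ref{lem:Rst} and~\ref{lem:ggqvv}~\ref{lem:ggqvv-ii}--\ref{lem:ggqvv-iii} combined with Proposition~\ref{prop:deltaggq}. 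I expect the principal technical step to be the density claim that $C^1_{0,\{T\}}(\bbT; \cD(A^*)) \otimes C^1_{0,\{T\}}(\bbT; \cD(A^*))$ is dense in $\cY^{(\varepsilon)}$; this reduces, by the definition of the injective tensor norm, to density of $C^1_{0,\{T\}}(\bbT; \cD(A^*))$ in $\cY$, a standard density result in the Gelfand-triple setting since $\cD(A^*)$ is dense in $V$ and smooth functions vanishing at $T$ are dense in $H^1_{0,\{T\}}(\bbT; V^*) \cap L^2(\bbT;V)$. Equality of the two continuous functionals on this dense subspace then propagates to all of $\cY^{(\varepsilon)}$, completing the proof.
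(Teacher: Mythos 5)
Your proposal is correct and follows essentially the same route as the paper's proof: apply Lemma~\ref{lem:weakequal} to two test functions in $C^1_{0,\{T\}}(\bbT;\cD(A^*))$, multiply, take expectations, kill the cross terms via $\cF_0$-measurability and the mean-zero/independence properties of the stochastic integral, treat the product of the two weak stochastic integrals with the weak It\^o isometry of Lemma~\ref{lem:weakito} and the affine splitting of $G$, and conclude by density. Your slightly more explicit treatment of the continuity of both functionals on $\cY^{(\varepsilon)}$ and of the tensor-density step is a harmless elaboration of what the paper leaves implicit.
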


\begin{proof}
    First, we remark that $\cBpi{u}{v}$ is well-defined for $u\in\cX^{(\pi)}$ and $v\in\cY^{(\varepsilon)}$,
    since the tensor spaces $\cY^\prime \xpi \cY^\prime$ and $(\cY \xeps \cY)^\prime$ are isometrically isomorphic
    by Lemma~\ref{lem:dualequal} and, hence, $\bbB^{(\pi)}u - T_\delta((G_1 \otimes G_1) (u) q) \in \cY^{(\varepsilon)\prime}$
    for all $u\in\cX^{(\pi)}$ by the definition of $\bbB^{(\pi)}$ and Proposition~\ref{prop:deltaggq}. 

    Let $v_1, v_2 \in C_{0,\{T\}}^1(\bbT; \cD(A^*)) = \{\phi\in C^1(\bbT; \cD(A^*)) : \phi(T) = 0\}$.
    Then, we obtain
    \begin{align*}
        \bbB^{(\pi)} \bigl(&\bbM^{(2)} X\bigr)(v_1 \otimes v_2 )
             = \bbB^{(\pi)} (\bbE[ X\otimes X])(v_1 \otimes v_2 )
             = \bbE \bigl[ \bbB^{(\pi)} ( X\otimes X)(v_1 \otimes v_2 ) \bigr] \\
            &= \bbE \bigl[ (\bbB(X) \otimes \bbB(X))(v_1 \otimes v_2) \bigr]
             = \bbE [ \cB(X,v_1) \, \cB(X,v_2) ] \\
            &= \bbE \left[ \lthip{X}{(-\partial_t+A^*)v_1} \lthip{X}{(-\partial_t+A^*)v_2} \right].
    \end{align*}
    Due to the regularity of $v_1$ and $v_2$ we may take
    the inner product on $L^2(\bbT; H)$ in this calculation.
    Now, since $X$ is the mild solution of~\eqref{eq:spdeall},
    Lemma~\ref{lem:weakequal} yields
    \begin{align*}
        \bbB^{(\pi)} \bigl(&\bbM^{(2)} X \bigr)(v_1 \otimes v_2)
             = \bbE\biggl[ \Bigl(\hip{X_0}{v_1(0)} + \int_0^T \hip{v_1(s)}{G(X(s)) \, \rd L(s)} \Bigr) \\
            &\qquad  \cdot \Bigl(\hip{X_0}{v_2(0)} + \int_0^T \hip{v_2(t)}{G(X(t)) \, \rd L(t)} \Bigr) \biggr] \\
            &= \bbE\left[ \hip{X_0}{v_1(0)} \hip{X_0}{v_2(0)} \right] \\
            &\quad + \bbE\Bigl[ \hip{X_0}{v_1(0)} \int_0^T \hip{v_2(t)}{G(X(t)) \, \rd L(t)} \Bigr] \\
            &\quad + \bbE\Bigl[ \hip{X_0}{v_2(0)} \int_0^T \hip{v_1(s)}{G(X(s)) \, \rd L(s)} \Bigr] \\
            &\quad + \bbE\Bigl[ \int_0^T \hip{v_1(s)}{G(X(s)) \, \rd L(s)}
                                \int_0^T \hip{v_2(t)}{G(X(t)) \, \rd L(t)} \Bigr].
    \end{align*}
    The $\cF_0$-measurability of $X_0\in L^2(\Omega; H)$, along with
    the independence of the stochastic integral with respect to $\cF_0$
    and its mean zero property imply that the second and the
    third term vanish: For $\ell\in\{1,2\}$ we define the $\lhs(\cH; \bbR)$-valued stochastic process
    $\Psi_{\ell}$ by
    \[
        \Psi_{\ell}(t) \colon w \mapsto \hip{v_{\ell}(t)}{G(X(t)) w} \quad \forall w\in \cH
    \]
    for $t\in\bbT$, $\bbP$-almost surely. Then we obtain
    $\|\Psi_{\ell}(t)\|_{\lhs(\cH; \bbR)}^2 = \chnorm{G(X(t))^* v_\ell(t)}^2$
    $\bbP$-almost surely with the adjoint $G(X(t))^*\in\cL(H; \cH)$ of $G(X(t))$ 
    and
    \begin{align*}
        \bbE\Bigl[ \hip{X_0}{v_\ell(0)} & \int_0^T \hip{v_\ell(t)}{G(X(t)) \, \rd L(t)} \Bigr]
            = \bbE\Bigl[ \hip{X_0}{v_\ell(0)} \int_0^T  \Psi_{\ell}(t) \, \rd L(t) \Bigr]  \\
            &= \bbE\biggl[ \hip{X_0}{v_\ell(0)} \bbE\Bigl[ \int_0^T \Psi_{\ell}(t) \, \rd L(t) \, \Big| \, \cF_0 \Bigr] \biggr] = 0
    \end{align*}
    by the definition of the weak stochastic integral, cf.~\cite[p.~151]{peszat2007},
    the independence of the stochastic integral with respect to $\cF_0$
    and the fact that the stochastic integral has mean zero. 
    For the first term we calculate 
    by using the operator $R_{0,0}$ defined in \eqref{eq:Rst} and its continuity 
    $R_{0,0} \in \cL(H^{(\pi)}; \cY^{(\varepsilon)\prime})$, cf.~Lemma~\ref{lem:Rst},
    \begin{align*}
        \bbE[ \hip{X_0}{v_1(0)} & \hip{X_0}{v_2(0)} ]
             = \bbE\left[ R_{0,0}(X_0 \otimes X_0)(v_1 \otimes v_2)\right] \\
            &= R_{0,0}(\bbE[X_0 \otimes X_0])(v_1\otimes v_2) = R_{0,0}\bigl(\bbM^{(2)} X_0 \bigr)(v_1\otimes v_2).
    \end{align*}
    Finally, the predictability of $X$ together with the continuity assumptions on $G$ imply
    the predictability of $G(X)$ and we may use Lemma~\ref{lem:weakito} for the
    last term yielding
    \begin{align*}
        \bbE&\Bigl[ \int_0^T \hip{v_1(s)}{G(X(s)) \, \rd L(s)}
                                \int_0^T \hip{v_2(t)}{G(X(t)) \, \rd L(t)} \Bigr] \\
            &= T_\delta(\bbE[G(X)  \otimes   G(X)]q)(v_1\otimes v_2) \\
            &= T_\delta(\bbE[G_1(X) \otimes G_1(X)]q)(v_1\otimes v_2)
                + T_\delta((\bbE[G_1(X)] \otimes G_2)q)(v_1\otimes v_2) \\
            &\quad + T_\delta((G_2   \otimes \bbE[G_1(X)])q)(v_1\otimes v_2)
                + T_\delta((G_2   \otimes G_2)q)(v_1\otimes v_2)   \\
            &= T_\delta((G_1\otimes G_1)(\bbM^{(2)} X)q)(v_1\otimes v_2)
                + T_\delta((G_1(m)\otimes G_2)q)(v_1\otimes v_2) \\
            &\quad + T_\delta((G_2   \otimes G_1(m))q)(v_1\otimes v_2)
                + T_\delta((G_2   \otimes G_2)q)(v_1\otimes v_2).
    \end{align*}
    Since $C_{0,\{T\}}^1(\bbT; \cD(A^*)) \subset \cY$ is a dense subset, the claim follows.
\end{proof}

\section{Existence and uniqueness}\label{section:exuni}

Before we extend the results of Section~\ref{section:moment}
for the second moment to the covariance of the mild solution
in Section~\ref{section:covariance},
we investigate in this section well-posedness of the variational
problem~\eqref{eq:deterministic} satisfied by the second moment.

To this end, we first take a closer look at the variational problem \eqref{eq:mvar}
satisfied by the mean function $m=\bbE X$ of the solution process $X$.
The bilinear form $\cB$ arising in this problem is known to satisfy an
inf-sup and a surjectivity condition on $\cX \times \cY$,
cf.~the second part of~\cite[Theorem~2.2]{schwab2013}. 

\begin{theorem}\label{thm:beta} 
    For the bilinear form $\cB$ in~\eqref{eq:cB} the following hold:
    \begin{align}
        \beta := \inf\limits_{u\in\cX\setminus\{0\}} \sup\limits_{v \in \cY\setminus\{0\}}
            \frac{\cB(u,v)}{\xnorm{u} \ynorm{v}} &> 0, \label{eq:beta} \\
        \forall v \in \cY\setminus\{0\}: \quad \sup\limits_{u \in \cX} \cB(u,v) &> 0. \notag
    \end{align}
\end{theorem}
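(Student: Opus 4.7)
The plan is to adapt the classical Schwab--Stevenson strategy for parabolic space-time variational problems. For the inf-sup condition I would, given $u\in\cX\setminus\{0\}$, construct an explicit test function $v\in\cY$ realising a fixed fraction of the supremum; the non-degeneracy condition in the second variable will then follow by the easier choice $u=v$.

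For the first condition, I would define $v\in\cY$ as the unique solution of the backward parabolic problem
\begin{equation*}
    -\partial_t v + A^* v = A u \text{ in } (0,T), \qquad v(T) = 0 .
\end{equation*}
Existence in $\cY$ with the expected energy bound is a standard application of the Lions--Magenes theory to the time-reversed equation, using $Au\in L^2(\bbT; V^*)$ and the self-adjointness and positive definiteness of $A$. With this choice,
\begin{equation*}
    \cB(u,v) = \int_0^T \vdpt{u(t)}{A u(t)} \, \rd t = \int_0^T \vip{u(t)}{u(t)} \, \rd t = \xnorm{u}^2 .
\end{equation*}
To control $\ynorm{v}$, I would test the equation for $v$ with $v$ itself, invoking the chain rule $\vvdp{\partial_t v}{v} = \tfrac{1}{2}\tfrac{\rd}{\rd t}\hnorm{v}^2$ (as already used in Lemma~\ref{lem:C0Y}) together with $v(T)=0$, to obtain
\begin{equation*}
    \tfrac{1}{2}\hnorm{v(0)}^2 + \|v\|_{L^2(\bbT; V)}^2 = \int_0^T \vip{u(t)}{v(t)} \, \rd t \leq \xnorm{u}\, \|v\|_{L^2(\bbT; V)},
\end{equation*}
so that $\|v\|_{L^2(\bbT; V)}\leq \xnorm{u}$. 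Reading the equation as $\partial_t v = A^*v - Au$ and using the bound $\|A\|_{\cL(V; V^*)} = \|A^*\|_{\cL(V;V^*)} = 1$ (which follows from the identity $\vdpt{A\varphi}{\psi} = \vip{\varphi}{\psi}$) then gives $\|\partial_t v\|_{L^2(\bbT; V^*)} \leq 2\,\xnorm{u}$. Combining, $\ynorm{v}\leq \sqrt{5}\,\xnorm{u}$, hence $\cB(u,v)\geq \tfrac{1}{\sqrt{5}}\xnorm{u}\,\ynorm{v}$, which yields $\beta \geq 1/\sqrt{5} > 0$.

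For the non-degeneracy condition, given $v\in\cY\setminus\{0\}$ I would take $u=v$, which is admissible since $\cY\hookrightarrow\cX$. The same chain rule and $v(T)=0$ produce
\begin{equation*}
    \cB(v,v) = -\int_0^T \vvdp{\partial_t v}{v} \, \rd t + \int_0^T \vnorm{v(t)}^2 \, \rd t = \tfrac{1}{2}\hnorm{v(0)}^2 + \|v\|_{L^2(\bbT; V)}^2 .
\end{equation*}
Since $v\not\equiv 0$ in $\cY$ forces $\|v\|_{L^2(\bbT; V)} > 0$, the right-hand side is strictly positive, which is exactly the second claim.

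The only step that is not a direct integration-by-parts computation is the assertion that the backward parabolic problem in $v$ admits a solution in $\cY$ enjoying the energy bound. This is the main technical obstacle, but it is classical: a Galerkin approximation based on an orthonormal eigenbasis of the compact operator $A^{-1}$, combined with the a priori estimate just derived, delivers the solution and its uniqueness. Once this ingredient is granted, both conclusions are short consequences of the two energy identities above.
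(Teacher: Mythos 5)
Your argument is correct, but it is genuinely different from what the paper does: the paper gives no proof of Theorem~\ref{thm:beta} at all and simply cites the second part of Theorem~2.2 of Schwab--Stevenson, while its own quantitative work on $\beta$ appears only later, in Lemma~\ref{lem:beta}, and proceeds in the opposite order. There, for a \emph{fixed test function} $v\in\cY$ one sets $u := v - (A^*)^{-1}\partial_t v$, computes $\cB(u,v)=\xnorm{u}^2$ and $\xnorm{u}\geq\ynorm{v}$, and then invokes the Ne\v{c}as equivalence of the primal and dual inf-sup constants to transfer the bound to $\inf_u\sup_v$; this yields the sharp constant $\beta\geq 1$. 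You instead attack the primal inf-sup directly: for a fixed \emph{trial function} $u$ you solve the backward problem $-\partial_t v + A^*v = Au$, $v(T)=0$, and your energy identities (which I have checked: $\cB(u,v)=\xnorm{u}^2$, $\|v\|_{L^2(\bbT;V)}\leq\xnorm{u}$, $\|\partial_t v\|_{L^2(\bbT;V^*)}\leq 2\xnorm{u}$ using $\|A\|_{\cL(V;V^*)}=\|A^*\|_{\cL(V;V^*)}=1$) are all valid, as is the choice $u=v$ for the non-degeneracy condition. What your route buys is a self-contained proof of both conditions with an explicit, if suboptimal, constant $1/\sqrt{5}$, and no appeal to the Ne\v{c}as symmetry of inf-sup constants; what it costs is (a) the weaker constant, which would degrade the final smallness condition~\eqref{eq:Grequire} if used in place of Lemma~\ref{lem:beta}, and (b) the reliance on solvability of the backward parabolic terminal-value problem in $\cY$, which is classical (Dautray--Lions, \S XVIII.3, already cited in the paper) but is itself of essentially the same depth as the well-posedness result the paper imports from Schwab--Stevenson, so the argument is direct rather than strictly more elementary.
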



For proving well-posedness of the variational problem~\eqref{eq:deterministic}
satisfied by the second moment of the mild solution, we need a lower bound on
the inf-sup constant $\beta$ in~\eqref{eq:beta}.
In order to derive this bound, we first recall the
Ne\v{c}as theorem, cf.~\cite[Theorem~2.2,~p.~422]{devore2009}.

\begin{theorem}[Ne\v{c}as theorem]\label{thm:necas} 
    Let $H_1$ and $H_2$ be two separable Hilbert spaces and $\scrB\colon H_1 \times H_2 \to \bbR$
    a continuous bilinear form. Then the variational problem
    \begin{equation}\label{eq:necasthm}
        u \in H_1 : \quad \scrB(u,v) = f(v) \quad \forall v \in H_2,
    \end{equation}
    admits a unique solution $u\in H_1$ for all $f \in H_2^\prime$, which depends continuously on $f$,
    if and only if the bilinear form $\scrB$ satisfies one of the following equivalent inf-sup conditions:
    \begin{enumerate}
    \item\label{thm:necas-i}
        It holds
        \begin{equation*}
            \inf\limits_{v_1\in H_1\setminus\{0\}} \sup\limits_{v_2 \in H_2\setminus\{0\}}
                \frac{\scrB(v_1, v_2)}{\|v_1 \|_{H_1} \|v_2\|_{H_2}} > 0,
            \quad
            \inf\limits_{v_2\in H_2\setminus\{0\}} \sup\limits_{v_1 \in H_1\setminus\{0\}}
                \frac{\scrB(v_1, v_2)}{\|v_1 \|_{H_1} \|v_2\|_{H_2}} > 0.
        \end{equation*}
    \item\label{thm:necas-ii}
        There exists $\gamma > 0$ such that
        \begin{equation*}
            \inf\limits_{v_1\in H_1\setminus\{0\}} \sup\limits_{v_2 \in H_2\setminus\{0\}}
                \frac{\scrB(v_1, v_2)}{\|v_1 \|_{H_1} \|v_2\|_{H_2}}
            =
            \inf\limits_{v_2 \in H_2\setminus\{0\}} \sup\limits_{v_1\in H_1\setminus\{0\}}
                \frac{\scrB(v_1, v_2)}{\|v_1 \|_{H_1} \|v_2\|_{H_2}}
            = \gamma.
        \end{equation*} 
    \end{enumerate}
    In addition, the solution $u$ of~\eqref{eq:necasthm} satisfies the stability estimate
    \begin{equation*}
        \|u\|_{H_1} \leq \gamma^{-1} \|f\|_{H_2^\prime}.
    \end{equation*}
\end{theorem}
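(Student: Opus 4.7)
The plan is to reformulate the variational problem as an operator equation. Define $B \in \cL(H_1; H_2^\prime)$ by $(Bu)(v) := \scrB(u,v)$ and, symmetrically, $B^{T} \in \cL(H_2; H_1^\prime)$ by $(B^{T} v)(u) := \scrB(u,v)$; continuity of $\scrB$ ensures that both operators are bounded. Then the variational problem~\eqref{eq:necasthm} is equivalent to the operator equation $Bu = f$, so well-posedness for every $f \in H_2^\prime$ amounts to $B$ being a Banach space isomorphism, with continuity of $B^{-1}$ then following from the open mapping theorem.

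The next step is to translate each inf-sup condition into an operator statement. The first inf-sup in~\ref{thm:necas-i} is equivalent to the lower bound $\|Bu\|_{H_2^\prime} \geq c \|u\|_{H_1}$ for some $c > 0$, which in turn is equivalent to $B$ being injective with closed range. The second inf-sup is analogously equivalent to $B^{T}$ being bounded below, hence injective with closed range. In the reflexive Hilbert setting, injectivity of $B^{T}$ is equivalent to density of $\operatorname{ran}(B)$ in $H_2^\prime$ via the annihilator identity $\ker(B^{T}) = \operatorname{ran}(B)^{\perp}$. Combining closed and dense range yields surjectivity, so $B$ is bijective; this proves the ``if'' direction. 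The converse is immediate: if $B$ is an isomorphism then both $B$ and $B^{T}$ are bounded below, so both inf-sups are positive.

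For the equivalence of~\ref{thm:necas-i} and~\ref{thm:necas-ii} together with the stability estimate, I would compute both inf-sup constants explicitly. From $\sup_{v \neq 0} \scrB(u,v)/\|v\|_{H_2} = \|Bu\|_{H_2^\prime}$ the first inf-sup equals $\inf_{u \neq 0} \|Bu\|_{H_2^\prime}/\|u\|_{H_1} = \|B^{-1}\|_{\cL(H_2^\prime; H_1)}^{-1}$, and an analogous computation gives $\|(B^{T})^{-1}\|^{-1}$ for the second inf-sup. The remaining identity $\|B^{-1}\| = \|(B^{T})^{-1}\|$ follows from $(B^{T})^{-1} = (B^{-1})^{T}$ together with the fact that transposition preserves the operator norm. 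Setting $\gamma := \|B^{-1}\|^{-1}$ then yields~\ref{thm:necas-ii}, and the stability estimate is direct: $\|u\|_{H_1} = \|B^{-1} f\|_{H_1} \leq \|B^{-1}\| \, \|f\|_{H_2^\prime} = \gamma^{-1} \|f\|_{H_2^\prime}$.

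I expect the main subtle point to be the passage from ``injectivity of $B^{T}$ plus closed range of $B$'' to surjectivity of $B$; this is where the reflexive Hilbert structure enters via the annihilator argument and should be stated carefully, whereas the remaining steps are routine bookkeeping with bounded linear operators.
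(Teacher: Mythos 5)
The paper does not prove this statement at all: it is recalled as a known result and attributed to the reference cited alongside it (Theorem~2.2, p.~422 there), so there is no in-paper argument to compare against. Your proposal is the standard proof of the Banach--Ne\v{c}as--Babu\v{s}ka theorem and is correct as a sketch. The operator reformulation $Bu=f$, the identification of the first inf-sup constant with $\inf_{u\neq 0}\|Bu\|_{H_2^\prime}/\|u\|_{H_1}$, the equivalence of being bounded below with injectivity plus closed range, and the passage to surjectivity via $\ker(B^{T})={}^{\perp}\operatorname{ran}(B)$ together with closedness of the range are all sound; you correctly isolate the one place where reflexivity (here, the Hilbert structure) is genuinely needed, namely that triviality of the pre-annihilator yields norm density of $\operatorname{ran}(B)$ in $H_2^\prime$. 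The quantitative part is also right: once $B$ is an isomorphism, both inf-sup constants equal $\|B^{-1}\|^{-1}=\|(B^{T})^{-1}\|^{-1}$ (an operator and its adjoint have equal norms, and $(B^{T})^{-1}=(B^{-1})^{T}$ under the canonical identifications), which simultaneously gives the equivalence of conditions (i) and (ii) and the stability bound $\|u\|_{H_1}\leq\gamma^{-1}\|f\|_{H_2^\prime}$. If you were to write this out in full, the only step deserving explicit care beyond what you flag is the identification of $B^{T}$ with the Banach adjoint of $B$ modulo the reflexivity isomorphism $H_2\cong H_2^{\prime\prime}$, which is what licenses both the annihilator identity and the norm equality.
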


By using the equivalence of the conditions~\ref{thm:necas-i} and~\ref{thm:necas-ii} in the Ne\v{c}as theorem
we are able to calculate a lower bound on $\beta$ in the following lemma.

\begin{lemma}\label{lem:beta} 
    The inf-sup constant $\beta$ in~\eqref{eq:beta} satisfies $\beta\geq 1$.
\end{lemma}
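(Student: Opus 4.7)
The plan is to apply the Necas theorem (Theorem~\ref{thm:necas}) to swap the order of the inf and sup: since well-posedness of~\eqref{eq:mvar} was recorded via~\cite[Theorem~2.3]{schwab2013} and Theorem~\ref{thm:beta} already supplies the primal inf--sup, condition~\ref{thm:necas-ii} of the Necas theorem holds, yielding
\[
\beta = \inf_{v\in\cY\setminus\{0\}}\sup_{u\in\cX\setminus\{0\}} \frac{\cB(u,v)}{\xnorm{u}\,\ynorm{v}}.
\]
For fixed $v\in\cY$ the expression $\cB(u,v)$ is precisely the $\cX\times\cX^{*}$ duality pairing between $u\in L^{2}(\bbT;V)$ and $(-\partial_{t}+A^{*})v\in L^{2}(\bbT;V^{*})$, so the inner supremum equals $\|(-\partial_{t}+A^{*})v\|_{L^{2}(\bbT;V^{*})}$. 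Thus the lemma reduces to the pointwise estimate $\|(-\partial_{t}+A^{*})v\|_{L^{2}(\bbT;V^{*})}\ge\ynorm{v}$ for every $v\in\cY$.

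I would establish this bound by squaring and expanding
\[
\int_{0}^{T}\!\|(-\partial_{t}+A^{*})v(t)\|_{V^{*}}^{2}\,\rd t
 = \int_{0}^{T}\!\|\partial_{t}v\|_{V^{*}}^{2}\,\rd t
 - 2\int_{0}^{T}\!\langle\partial_{t}v,A^{*}v\rangle_{V^{*}}\,\rd t
 + \int_{0}^{T}\!\|A^{*}v\|_{V^{*}}^{2}\,\rd t,
\]
and treating each term separately. For the last term, the defining identity $\vdpt{\varphi}{A^{*}\psi}=\vip{\varphi}{\psi}$ makes $A^{*}\colon V\to V^{*}$ an isometry, so its time integral equals $\|v\|_{L^{2}(\bbT;V)}^{2}$. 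For the cross term, the inner product on $V^{*}$ is obtained by Riesz via $A$, which gives $\langle f,g\rangle_{V^{*}}=\vvdp{f}{A^{-1}g}$ and consequently $\langle\partial_{t}v,A^{*}v\rangle_{V^{*}}=\vvdp{\partial_{t}v}{v}$. The Gelfand triple chain rule from~\cite[§XVIII.1]{dautray1999}, already invoked in the proof of Lemma~\ref{lem:C0Y}, together with the terminal condition $v(T)=0$, then gives
\[
-2\int_{0}^{T}\vvdp{\partial_{t}v(t)}{v(t)}\,\rd t = \hnorm{v(0)}^{2}.
\]

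Assembling the three contributions produces the identity
\[
\|(-\partial_{t}+A^{*})v\|_{L^{2}(\bbT;V^{*})}^{2}
  = \|\partial_{t}v\|_{L^{2}(\bbT;V^{*})}^{2} + \|v\|_{L^{2}(\bbT;V)}^{2} + \hnorm{v(0)}^{2}
  = \ynorm{v}^{2} + \hnorm{v(0)}^{2},
\]
which dominates $\ynorm{v}^{2}$ and delivers $\beta\ge 1$. The step demanding the most care is the correct identification of the $V^{*}$-inner product with the $V^{*}\times V$ duality pairing through the Riesz map $A$; once this is pinned down, the remaining manipulations are routine.
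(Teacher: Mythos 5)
Your proposal is correct and follows essentially the same route as the paper: both swap the inf and sup via the Nečas theorem and then reduce to showing $\|(-\partial_t+A^*)v\|_{L^2(\bbT;V^*)}\geq\ynorm{v}$, with the same expansion into the three terms $\|\partial_t v\|^2_{L^2(\bbT;V^*)}+\|v\|^2_{L^2(\bbT;V)}+\hnorm{v(0)}^2$ using the isometry property of $A^*$ and the terminal condition $v(T)=0$. The only cosmetic difference is that the paper exhibits the explicit supremizer $u=v-(A^*)^{-1}\partial_t v$ and bounds $\cB(u,v)/\xnorm{u}$, whereas you identify the supremum outright as the dual norm.
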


\begin{proof}
    Combining the results of Theorem~\ref{thm:beta} with the equivalence of~\ref{thm:necas-i} and~\ref{thm:necas-ii} in Theorem~\ref{thm:necas}
    yields the equality
    \[
        \beta = \inf\limits_{u\in\cX\setminus\{0\}} \sup\limits_{v\in\cY\setminus\{0\}} \frac{\cB(u,v)}{\xnorm{u} \ynorm{v}}
              = \inf\limits_{v\in\cY\setminus\{0\}} \sup\limits_{u\in\cX\setminus\{0\}} \frac{\cB(u,v)}{\xnorm{u} \ynorm{v}}.
    \]
    To derive a lower bound for $\beta$, we proceed as
    in~\cite{schwab2009, urban2014}.
    Fix $v \in \cY\setminus\{0\}$, and define $u := v - (A^*)^{-1} \partial_t v$, where
    $(A^*)^{-1}$ is the right-inverse of the surjection $A^* \in \cL(V;V^*)$.
    Then $u\in\cX = L^2(\bbT; V)$ since $(A^*)^{-1} \in \cL(V^*; V)$ and we calculate as follows:
    \begin{align*}
        \xnorm{u}^2 &= \int_0^T \vnorm{u(t)}^2 \, \rd t
             = \int_0^T \vdpt{u(t)}{A^* u(t)} \, \rd t\\
            &= \int_0^T \vdpt{v(t) - (A^*)^{-1} \partial_t v(t)}{A^* v(t) - \partial_t v(t)} \, \rd t \\
            &= \int_0^T \vdpt{v(t)}{A^* v(t)} \, \rd t
                + \int_0^T \vdpt{(A^*)^{-1} \partial_t v(t)}{\partial_t v(t)} \, \rd t \\
            &\quad - \int_0^T \vdpt{v(t)}{\partial_t v(t)} \, \rd t
                - \int_0^T \vdpt{(A^*)^{-1} \partial_t v(t)}{A^* v(t)} \, \rd t.
    \end{align*}
    Now the symmetry of the inner product $\vip{\cdot}{\cdot}$ on $V$ yields
    \begin{align*}
        \vdpt{(A^*)^{-1} \partial_t v(t)}{A^* v(t)}
            &= \vip{(A^*)^{-1} \partial_t v(t)}{v(t)}
             = \vip{v(t)}{(A^*)^{-1} \partial_t v(t)} \\
            &= \vdpt{v(t)}{\partial_t v(t)},
    \end{align*}
    and by inserting the identity $A^* (A^*)^{-1}$,
    using $\tfrac{\rd}{\rd t} \hnorm{v(t)}^2 = 2 \vdpt{v(t)}{\partial_t v(t)}$
    and $v(T) = 0$ we obtain
    \begin{align*}
        \xnorm{u}^2
            &= \xnorm{v}^2 + \xnorm{(A^*)^{-1} \partial_t v}^2
                -  \int_0^T 2 \vdpt{v(t)}{\partial_t v(t)} \, \rd t \\
            &= \xnorm{v}^2 + \xnorm{(A^*)^{-1} \partial_t v}^2
                + \hnorm{v(0)}^2 \\
            &\geq \xnorm{v}^2 + \xnorm{(A^*)^{-1} \partial_t v}^2
             = \xnorm{v}^2 + \| \partial_t v\|_{L^2(\bbT;V^*)}^2 = \ynorm{v}^2.
    \end{align*}
    In the last line we used that $\vdnorm{w} = \vnorm{(A^*)^{-1} w}$ for every $w\in V^*$,
    since
    \begin{align*}
        \vdnorm{w} &= \sup\limits_{v\in V\setminus\{0\}} \frac{\vdpt{v}{w}}{\vnorm{v}} \\
                   &= \sup\limits_{v\in V\setminus\{0\}} \frac{\vdpt{v}{A^*((A^*)^{-1} w)}}{\vnorm{v}}
                    = \sup\limits_{v\in V\setminus\{0\}} \frac{\vip{v}{(A^*)^{-1} w}}{\vnorm{v}}
                    = \vnorm{(A^*)^{-1} w}.
    \end{align*}
    Hence, we obtain for any fixed $v\in \cY$ and $u = v - (A^*)^{-1} \partial_t v$ that
    $\xnorm{u} \geq \ynorm{v}$. In addition, we estimate
    \begin{align*}
        \cB(u,v) &= \int_0^T \vdpt{u(t)}{(-\partial_t + A^*) v(t)} \, \rd t \\
                 &= \int_0^T \vdpt{v(t) - (A^*)^{-1} \partial_t v(t)}{A^*(v(t) - (A^*)^{-1} \partial_t v(t))} \, \rd t\\
                 &= \int_0^T \vnorm{v(t) - (A^*)^{-1} \partial_t v(t)}^2 \, \rd t
                  = \xnorm{v - (A^*)^{-1} \partial_t v}^2
                  = \xnorm{u}^2 \geq \xnorm{u} \ynorm{v}
    \end{align*}
    and, therefore,
    \[
        \sup\limits_{w\in\cX\setminus\{0\}} \frac{\cB(w,v)}{\xnorm{w}} \geq \ynorm{v} \quad \forall v \in \cY.
    \]
    This shows the assertion
    \[
        \beta = \inf\limits_{w\in\cX\setminus\{0\}} \sup\limits_{v\in\cY\setminus\{0\}} \frac{\cB(w,v)}{\xnorm{w} \ynorm{v}}
              = \inf\limits_{v\in\cY\setminus\{0\}} \sup\limits_{w\in\cX\setminus\{0\}} \frac{\cB(w,v)}{\xnorm{w} \ynorm{v}}
              \geq 1. \qedhere
    \]
\end{proof}

The result on the inf-sup constant $\beta$ in Lemma~\ref{lem:beta} above
can be formulated in terms of the operator $\bbB\in\cL(\cX;\cY^\prime)$ associated with the bilinear
form $\cB$ as follows: For every $u\in\cX$ it holds
\begin{equation}\label{eq:Blower}
    \|\bbB u\|_{\cY^\prime} = \sup\limits_{v\in\cY\setminus\{0\}} \frac{\cB(u,v)}{\ynorm{v}}
                            \geq \xnorm{u},
\end{equation}
i.e., $\bbB$ is injective and by Theorem~\ref{thm:beta} also surjective and, hence,
boundedly invertible with $\| \bbB^{-1} \|_{\cL(\cY^\prime; \cX)} \leq 1$.

These preliminary observations on the operator $\bbB$ associated with the
bilinear form $\cB$ yield the following result on the operator
$\bbB^{(\pi)} = \bbB \xpi \bbB$
mapping from the tensor product space $\cX^{(\pi)}$
to the tensor product space $(\cY^\prime)^{(\pi)}$.

\begin{lemma}\label{lem:Bpi}
    The unique operator $\bbB^{(\pi)} \in \cL(\cX^{(\pi)}; (\cY^\prime)^{(\pi)})$ satisfying
    $\bbB^{(\pi)}(u^1 \otimes u^2) = \bbB u^1 \otimes \bbB u^2$ for all $u^1, u^2 \in \cX$
    is injective and, moreover, it holds
    \begin{equation}\label{eq:Bpilower}
        \| \bbB^{(\pi)}(u) \|_{(\cY^\prime)^{(\pi)}} \geq \| u\|_{\cX^{(\pi)}}
    \end{equation}
    for all $u\in\cX^{(\pi)}$.
\end{lemma}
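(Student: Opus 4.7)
The plan is to exploit the fact, established in the preamble to the lemma, that the underlying operator $\bbB\in\cL(\cX;\cY^\prime)$ is boundedly invertible with $\|\bbB^{-1}\|_{\cL(\cY^\prime;\cX)}\leq 1$. If I can lift this inverse to a bounded operator on the projective tensor product space and show that the lifted inverse composes with $\bbB^{(\pi)}$ to the identity, then the lower bound and injectivity follow automatically.

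More concretely, I would apply Lemma~\ref{lem:tensor}~\ref{lem:tensor-i} to the bounded operator $\bbB^{-1}\in\cL(\cY^\prime;\cX)$. This yields a unique bounded extension, which I denote by $(\bbB^{-1})^{(\pi)}\in\cL((\cY^\prime)^{(\pi)};\cX^{(\pi)})$, of the elementary-tensor prescription $w_1\otimes w_2\mapsto \bbB^{-1}w_1\otimes \bbB^{-1}w_2$, and its norm equals $\|\bbB^{-1}\|_{\cL(\cY^\prime;\cX)}^2\leq 1$. On an elementary tensor $u_1\otimes u_2\in\cX\otimes\cX$ one then computes
\[
    \bigl((\bbB^{-1})^{(\pi)}\circ\bbB^{(\pi)}\bigr)(u_1\otimes u_2)
      = \bbB^{-1}(\bbB u_1)\otimes \bbB^{-1}(\bbB u_2)
      = u_1\otimes u_2,
\]
so the composition agrees with the identity on the dense subspace $\cX\otimes\cX\subset \cX^{(\pi)}$. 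Since both $\bbB^{(\pi)}$ and $(\bbB^{-1})^{(\pi)}$ are continuous, the composition equals the identity on all of $\cX^{(\pi)}$.

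With this left inverse in hand, the lower bound is immediate: for every $u\in\cX^{(\pi)}$,
\[
    \|u\|_{\cX^{(\pi)}} = \bigl\|(\bbB^{-1})^{(\pi)}\bigl(\bbB^{(\pi)}u\bigr)\bigr\|_{\cX^{(\pi)}}
      \leq \bigl\|(\bbB^{-1})^{(\pi)}\bigr\|_{\cL((\cY^\prime)^{(\pi)};\cX^{(\pi)})} \bigl\|\bbB^{(\pi)}u\bigr\|_{(\cY^\prime)^{(\pi)}}
      \leq \bigl\|\bbB^{(\pi)}u\bigr\|_{(\cY^\prime)^{(\pi)}},
\]
which is~\eqref{eq:Bpilower}, and injectivity of $\bbB^{(\pi)}$ follows at once from the same estimate. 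I do not foresee any serious obstacle here; the only point that requires care is the correct normalization, namely confirming via Lemma~\ref{lem:beta} (together with the bounded invertibility of $\bbB$ noted in~\eqref{eq:Blower}) that $\|\bbB^{-1}\|_{\cL(\cY^\prime;\cX)}\leq 1$, so that the embedding constant in Lemma~\ref{lem:tensor}~\ref{lem:tensor-i} applied to $(\bbB^{-1})^{(\pi)}$ is at most one and the resulting inequality comes out with constant $1$ as stated.
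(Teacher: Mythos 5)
Your proposal is correct and follows essentially the same route as the paper: both arguments rest on the bounded invertibility of $\bbB$ with $\|\bbB^{-1}\|_{\cL(\cY^\prime;\cX)}\leq 1$ and on lifting $\bbB^{-1}$ to the projective tensor product. The paper carries this out by hand --- applying $\bbB^{-1}\otimes\bbB^{-1}$ to an arbitrary representation of $\bbB^{(\pi)}u$ and taking the infimum in the projective norm --- whereas you package the same computation as the statement that $(\bbB^{-1})^{(\pi)}$ is a bounded left inverse of norm at most one, which is a clean and equivalent formalization.
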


\begin{proof}
    Let $u \in \cX \otimes \cX$ and $\sum\nolimits_{k=1}^{N} u_k^1 \otimes u_k^2$ be a representation of $u$
    and $\sum\nolimits_{\ell=1}^{M} f_\ell^1 \otimes f_\ell^2$ be a representation of $\bbB^{(\pi)} u$.

    Since $\bbB$ is boundedly invertible, $\sum\nolimits_{\ell=1}^{M} \bbB^{-1} f_\ell^1 \otimes \bbB^{-1} f_\ell^2$
    is a well-defined element in $\cX \otimes \cX$ and, furthermore, it is a representation of $u$, since
    \begin{align*}
        u  &= \sum\limits_{k=1}^{N} u_k^1 \otimes u_k^2
            = \sum\limits_{k=1}^{N} \left(\bbB^{-1} \bbB u_k^1\right) \otimes \left(\bbB^{-1} \bbB u_k^2\right)
            = \left( \bbB^{-1} \otimes \bbB^{-1} \right) \biggl( \sum\limits_{k=1}^{N} \bbB u_k^1 \otimes \bbB u_k^2 \biggr) \\
           &= \left( \bbB^{-1} \otimes \bbB^{-1} \right) \bigl( \bbB^{(\pi)} u \bigr)
            = \left( \bbB^{-1} \otimes \bbB^{-1} \right) \biggl( \sum\limits_{\ell=1}^{M} f_\ell^1 \otimes f_\ell^2 \biggr)
            = \sum\limits_{\ell=1}^{M} \bbB^{-1} f_\ell^1 \otimes \bbB^{-1} f_\ell^2.
    \end{align*}
    With this observation we can estimate
    \[
        \|u\|_{\cX^{(\pi)}} \leq \sum\limits_{\ell=1}^{M} \|\bbB^{-1} f_\ell^1\|_{\cX} \|\bbB^{-1} f_\ell^2\|_{\cX}
            \leq \sum\limits_{\ell=1}^{M} \|f_\ell^1\|_{\cY^\prime} \|f_\ell^2\|_{\cY^\prime},
    \]
    since $\|\bbB^{-1}\|_{\cL(\cY^\prime; \cX)} \leq 1$.
    This calculation shows $\|u\|_{\cX^{(\pi)}} \leq \|\bbB^{(\pi)} u \|_{(\cY^\prime)^{(\pi)}}$ for all $u\in\cX^{(\pi)}$ and the assertion is proven.
\end{proof}

By using this lemma together with the properties of the operator $T_\delta$, which we have derived in Section~\ref{subsec:diagonal},
we now prove well-posedness of the variational problem satisfied by the second moment of the mild solution.

\begin{theorem}\label{thm:wellposed} 
    Suppose that
    \begin{equation}\label{eq:Grequire}
        \| G_1 \|_{\cL(V; \lhs(\cH;H))} < 1.
    \end{equation}
    Then the variational problem
    \begin{equation}\label{eq:wvariational}
        w \in \cX^{(\pi)} : \quad \cBpi{w}{v} = f(v) \quad \forall v \in \cY^{(\varepsilon)}
    \end{equation}
    admits at most one solution $w\in\cX^{(\pi)}$ for every $f\in \cY^{(\varepsilon)\prime}$.
    In particular, there exists a unique solution $u\in\cX^{(\pi)}$ satisfying~\eqref{eq:deterministic}.
\end{theorem}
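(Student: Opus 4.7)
The plan is to show that the operator associated with the bilinear form $\cBpi{\cdot}{\cdot}$ is bounded from below on $\cX^{(\pi)}$; this yields uniqueness for arbitrary right-hand sides, and existence for the particular right-hand side $f$ appearing in \eqref{eq:deterministic} then follows directly from Theorem~\ref{thm:deterministic}, which already exhibits $\bbM^{(2)}X$ as a solution.

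First, I would invoke Lemma~\ref{lem:dualequal} to identify $(\cY^\prime)^{(\pi)}$ isometrically with $\cY^{(\varepsilon)\prime}$, so that both $\bbB^{(\pi)}(u)$ and $T_\delta((G_1 \otimes G_1)(u)q)$ can be viewed as elements of the same Banach space $\cY^{(\varepsilon)\prime}$. Under this identification, Lemma~\ref{lem:Bpi} gives the lower bound
\[
    \| \bbB^{(\pi)}(u) \|_{\cY^{(\varepsilon)\prime}} \geq \| u \|_{\cX^{(\pi)}}
    \quad \forall u \in \cX^{(\pi)}.
\]
Next, I would estimate the perturbation term by composing the bound $\|T_\delta\|_{\cL(\cW^{(\pi)};\cY^{(\varepsilon)\prime})} \leq 1$ from Proposition~\ref{prop:deltaggq} with the bound from Lemma~\ref{lem:ggqvv}~\ref{lem:ggqvv-i}, namely $\| (G_1 \otimes G_1)(\cdot) q \|_{\cL(\cX^{(\pi)}; \cW^{(\pi)})} \leq \| G_1 \|_{\cL(V;\lhs(\cH;H))}^2$, to conclude
\[
    \| T_\delta((G_1 \otimes G_1)(u) q) \|_{\cY^{(\varepsilon)\prime}}
    \leq \| G_1 \|_{\cL(V;\lhs(\cH;H))}^2 \, \| u \|_{\cX^{(\pi)}}.
\]

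Combining the two estimates via the reverse triangle inequality then yields
\[
    \| \cBpi{u}{\cdot} \|_{\cY^{(\varepsilon)\prime}}
    \geq \bigl(1 - \| G_1 \|_{\cL(V;\lhs(\cH;H))}^2\bigr) \, \| u \|_{\cX^{(\pi)}},
\]
and assumption~\eqref{eq:Grequire} makes the constant on the right strictly positive. Hence the linear map $u \mapsto \cBpi{u}{\cdot}$ from $\cX^{(\pi)}$ into $\cY^{(\varepsilon)\prime}$ is injective, which immediately gives the uniqueness statement for~\eqref{eq:wvariational} for any $f \in \cY^{(\varepsilon)\prime}$: if $w_1, w_2$ are two solutions, then $\cBpi{w_1 - w_2}{v} = 0$ for all $v \in \cY^{(\varepsilon)}$, so by the bound above $\|w_1 - w_2\|_{\cX^{(\pi)}} = 0$. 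Finally, Theorem~\ref{thm:deterministic} exhibits $\bbM^{(2)}X \in \cX^{(\pi)}$ as a solution of~\eqref{eq:deterministic}, so by the uniqueness just established it is the unique such solution.

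The conceptual step, rather than an obstacle in the usual calculational sense, is recognising that although the trial and test spaces are non-reflexive and classical inf-sup theory does not directly apply, all of the hard work has already been absorbed into Lemmas~\ref{lem:Bpi},~\ref{lem:ggqvv} and Proposition~\ref{prop:deltaggq}; the only remaining point is to dominate the non-separable perturbation $T_\delta \circ (G_1 \otimes G_1)(\cdot)q$ by the principal part $\bbB^{(\pi)}$ in the same norm, which is exactly what the isometric identification $(\cY^\prime)^{(\pi)} \cong \cY^{(\varepsilon)\prime}$ permits.
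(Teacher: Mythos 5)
Your proposal is correct and follows essentially the same route as the paper: the same three ingredients (Lemma~\ref{lem:Bpi} for the lower bound on $\bbB^{(\pi)}$ via the identification of Lemma~\ref{lem:dualequal}, and Proposition~\ref{prop:deltaggq} combined with Lemma~\ref{lem:ggqvv}~\ref{lem:ggqvv-i} for the perturbation), with existence for the specific right-hand side supplied by Theorem~\ref{thm:deterministic}. The paper phrases the argument as showing that the homogeneous problem has only the trivial solution and applies the ordinary triangle inequality to the vanishing residual, whereas you state the equivalent lower bound $\|\cBpi{u}{\cdot}\|_{\cY^{(\varepsilon)\prime}} \geq (1-\|G_1\|^2_{\cL(V;\lhs(\cH;H))})\|u\|_{\cX^{(\pi)}}$ directly via the reverse triangle inequality; this is only a cosmetic difference.
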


\begin{proof}
    It suffices to show that only $u=0$ solves the homogeneous problem
    \[
        u \in \cX^{(\pi)} : \quad \cBpi{u}{v} = 0 \quad \forall v \in \cY^{(\varepsilon)}.
    \]
    For this purpose, let $u\in\cX^{(\pi)}$ be a solution to the homogeneous problem.
    Then it holds
    \[
        0 = \cBpi{u}{v} = \bbB^{(\pi)}(u)v - T_\delta((G_1\otimes G_1)(u)q) v
    \]
    for all $v\in\cY^{(\varepsilon)}$ and, hence,
    \[
        \| \bbB^{(\pi)} u - T_\delta((G_1\otimes G_1)(u)q) \|_{\cY^{(\varepsilon)\prime}} = 0
    \]
    and we calculate by using
    the estimate~\eqref{eq:Bpilower} of Lemma~\ref{lem:Bpi} as well as Lemma~\ref{lem:dualequal}
    as follows,
    \begin{align*}
        \|u\|_{\cX^{(\pi)}} &\leq \|\bbB^{(\pi)} u\|_{(\cY^\prime)^{(\pi)}} = \|\bbB^{(\pi)} u\|_{\cY^{(\varepsilon)\prime}} \\
            &\leq \| \bbB^{(\pi)} u - T_\delta((G_1\otimes G_1)(u)q) \|_{\cY^{(\varepsilon)\prime}} + \| T_\delta((G_1\otimes G_1)(u)q) \|_{\cY^{(\varepsilon)\prime}} \\
            &= \| T_\delta((G_1\otimes G_1)(u)q) \|_{\cY^{(\varepsilon)\prime}}
    \end{align*}
    In addition, Proposition~\ref{prop:deltaggq} and estimate~\eqref{eq:ggqvv} in Lemma~\ref{lem:ggqvv}~\ref{lem:ggqvv-i} yield
    \begin{align*}
        \|u\|_{\cX^{(\pi)}}
            &\leq \| T_\delta \|_{\cL(\cW^{(\pi)};\cY^{(\varepsilon)\prime})} \|(G_1\otimes G_1)(u)q\|_{\cW^{(\pi)}} \\
            &\leq \|(G_1\otimes G_1)(\cdot)q\|_{\cL(\cX^{(\pi)};\cW^{(\pi)})} \|u\|_{\cX^{(\pi)}}
             \leq \|G_1\|_{\cL(V;\lhs(\cH;H))}^2 \|u\|_{\cX^{(\pi)}}.
    \end{align*} 
    Therefore, $u=0$, if $G_1$ satisfies Condition~\eqref{eq:Grequire}, and the variational problem~\eqref{eq:wvariational} has at most one solution.
    Under Assumption~\ref{ass:spde} on~$X_0$ and the affine operator~$G(\cdot) = G_1(\cdot) + G_2$ there exists
    a unique (up to modification) mild solution~$X$ to the stochastic partial differential equation~\eqref{eq:spdeall} with
    second moment~$\bbM^{(2)} X \in \cX^{(\pi)}$ satisfying the variational problem~\eqref{eq:deterministic},
    cf.~Theorems~\ref{thm:exunimild},~\ref{thm:regularsecond}, and~\ref{thm:deterministic}.
    Therefore, we obtain existence and uniqueness of a solution to~\eqref{eq:wvariational} for the right-hand side
    \begin{align*}
        f(v)        &= R_{0,0}\bigl(\bbM^{(2)} X_0\bigr)v
                            + T_\delta( (G_1(m) \otimes G_2) q)v \\
                    &\quad  + T_\delta( (G_2 \otimes G_1(m)) q)v + T_\delta((G_2 \otimes G_2) q)v,
    \end{align*}
    where $m=\bbE X$ and the variational problem~\eqref{eq:deterministic} is well-posed.
\end{proof}

To conclude, we have shown in this section that there exists a variational problem
that has the second moment of the mild solution~\eqref{eq:mildsol} as its unique solution.
%
%
%

\section{From the second moment to the covariance}\label{section:covariance}

In the previous sections, we have seen that the second moment $\bbM^{(2)} X$
of the mild solution $X$ to the stochastic partial differential
equation~\eqref{eq:spdeall} satisfies a well-posed deterministic
variational problem.
As a consequence of this result we derive another deterministic problem in this section,
which is satisfied by the covariance $\cov(X)$ of the solution process.
For this purpose, we remark first that
\begin{align*}
    \cov(X) &= \bbE\left[ (X - \bbE X) \otimes (X - \bbE X) \right] \\
        &= \bbE\left[ (X\otimes X) - (\bbE X \otimes X) - (X \otimes \bbE X) + (\bbE X \otimes \bbE X) \right] \\
        &= \bbM^{(2)} X - \bbE X \otimes \bbE X
\end{align*}
and $\cov(X) \in \cX^{(\pi)}$,
since $\bbM^{(2)} X \in \cX^{(\pi)}$ by Theorem~\ref{thm:regularsecond} and $m=\bbE X \in \cX$.
By using this relation we can immediately deduce
the following result for the covariance $\cov(X)$
of the mild solution.

\begin{theorem}\label{thm:covariance} 
    Let all conditions of Assumption~\ref{ass:spde} be satisfied and let
    $X$ be the mild solution to~\eqref{eq:spdeall}.
    Then the covariance $\cov(X)\in\cX^{(\pi)}$ solves the well-posed problem
    \begin{equation}\label{eq:covariance}
        u\in\cX^{(\pi)} : \quad
            \cBpi{u}{v} = g(v) \quad \forall v \in \cY^{(\varepsilon)}
    \end{equation}
    with $\widetilde{\cB}^{(\pi)}$ as in~\eqref{eq:cBpi} and
    for $v \in \cY^{(\varepsilon)}$
    \[
        g(v) := R_{0,0}(\cov(X_0))v + T_\delta( (G(m) \otimes G(m)) q)v,
    \]
    where $T_\delta$ and $R_{0,0}$ are the operators defined in~\eqref{eq:Tdelta} and~\eqref{eq:Rst}
    and $m\in\cX$ denotes the mean function introduced in~\eqref{eq:meanfct}.
\end{theorem}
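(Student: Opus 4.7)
The plan is to exploit the decomposition $\cov(X) = \bbM^{(2)} X - m \otimes m$ stated at the beginning of this section. Both summands lie in $\cX^{(\pi)}$---the first by Theorem~\ref{thm:regularsecond}, the second because $m \in \cX$ so that $m\otimes m$ is an elementary tensor in $\cX\otimes\cX \subset \cX^{(\pi)}$---hence $\cov(X) \in \cX^{(\pi)}$. Since $\cBpi{\cdot}{v}$ is linear in its first argument for each fixed $v\in\cY^{(\varepsilon)}$, it suffices to combine the variational identity for $\bbM^{(2)}X$ from Theorem~\ref{thm:deterministic} with an explicit computation of $\cBpi{m\otimes m}{v}$, verify that the difference coincides with $g(v)$, and then invoke Theorem~\ref{thm:wellposed} for uniqueness.

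To compute $\cBpi{m\otimes m}{v}$ I would first test on elementary tensors $v = v_1\otimes v_2 \in \cY\otimes\cY$. By definition of $\bbB^{(\pi)}$ one has $\bbB^{(\pi)}(m\otimes m) = \bbB m \otimes \bbB m$, and the variational problem~\eqref{eq:mvar} identifies $\bbB m$ with $R_0(\bbE X_0) \in \cY^\prime$; hence $\bbB^{(\pi)}(m\otimes m)(v_1\otimes v_2) = R_{0,0}(\bbE X_0\otimes\bbE X_0)(v_1\otimes v_2)$. Similarly $(G_1\otimes G_1)(m\otimes m)\,q$ reduces to $(G_1(m)\otimes G_1(m))\,q$, so that the quadratic term becomes $T_\delta((G_1(m)\otimes G_1(m))q)v$. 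Because both sides define continuous functionals on $\cY^{(\varepsilon)}$ by Lemmas~\ref{lem:Rst},~\ref{lem:ggqvv} and Proposition~\ref{prop:deltaggq}, the identity extends from $\cY\otimes\cY$ to all of $\cY^{(\varepsilon)}$, giving
\[
    \cBpi{m\otimes m}{v} = R_{0,0}(\bbE X_0\otimes\bbE X_0)v - T_\delta((G_1(m)\otimes G_1(m))q)v.
\]

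Subtracting this from $f(v)$ as defined in Theorem~\ref{thm:deterministic} and expanding $G(m) = G_1(m) + G_2$ by affinity,
\[
    G(m)\otimes G(m) = G_1(m)\otimes G_1(m) + G_1(m)\otimes G_2 + G_2\otimes G_1(m) + G_2\otimes G_2,
\]
one sees that the three cross terms appearing in $f$ together with the diagonal term $T_\delta((G_1(m)\otimes G_1(m))q)v$ reassemble into $T_\delta((G(m)\otimes G(m))q)v$. Combined with $\bbM^{(2)}X_0 - \bbE X_0\otimes\bbE X_0 = \cov(X_0)$, this produces exactly $g(v)$, so $\cov(X)$ solves~\eqref{eq:covariance}.

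For well-posedness, $g\in\cY^{(\varepsilon)\prime}$ since $\cov(X_0)\in H^{(\pi)}$ (using $X_0 \in L^2(\Omega;H)$) and since $R_{0,0}$, the tensor operators $G_i\xpi G_j$, and $T_\delta$ are bounded on the relevant spaces (Lemmas~\ref{lem:ggupi},~\ref{lem:Rst} and Proposition~\ref{prop:deltaggq}). Existence is by the construction above, and uniqueness follows verbatim from the proof of Theorem~\ref{thm:wellposed}---which only uses injectivity of the operator associated with $\cBpi{\cdot}{\cdot}$ on $\cX^{(\pi)}$ and is independent of the right-hand side---under the same additional hypothesis $\|G_1\|_{\cL(V;\lhs(\cH;H))}<1$. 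The main obstacle is essentially bookkeeping: each identity must be extended carefully from the algebraic tensor products $\cX \otimes \cX$ and $\cY \otimes \cY$ to their projective, respectively injective, completions, but no new analytic estimates beyond those already developed in Sections~\ref{section:aux} and~\ref{section:exuni} are required.
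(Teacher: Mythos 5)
Your proposal is correct and follows essentially the same route as the paper: decompose $\cov(X) = \bbM^{(2)}X - m\otimes m$, evaluate $\cBpi{m\otimes m}{\cdot}$ on elementary tensors via~\eqref{eq:mvar}, reassemble the $G$-terms using affinity, extend by density of $\cY\otimes\cY$ in $\cY^{(\varepsilon)}$, and invoke Theorem~\ref{thm:wellposed} for uniqueness. Your explicit remark that the well-posedness claim inherits the hypothesis $\|G_1\|_{\cL(V;\lhs(\cH;H))}<1$ from Theorem~\ref{thm:wellposed} is a point the paper leaves implicit, but otherwise the arguments coincide.
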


\begin{proof}
    The covariance of the mild solution satisfies that $\cov(X) = \bbM^{(2)} X - \bbE X \otimes \bbE X$
    by the remark above.
    By using the result of Theorem~\ref{thm:deterministic} for the second moment $\bbM^{(2)} X$
    as well as~\eqref{eq:mvar} for the mean function $m = \bbE X$ we calculate for $v_1, v_2\in\cY$:
    \begin{align*}
        \widetilde{\cB}&^{(\pi)}(\cov(X), v_1 \otimes v_2)
             = \cBpi{\bbM^{(2)} X}{v_1 \otimes v_2} - \cBpi{\bbE X \otimes \bbE X}{v_1 \otimes v_2} \\
            &= f(v_1 \otimes v_2)  - \bbB^{(\pi)}(m \otimes m)(v_1 \otimes v_2)
                 + T_\delta( (G_1(m) \otimes G_1(m)) q)(v_1 \otimes v_2) \\
            &= R_{0,0}\bigl(\bbM^{(2)} X_0\bigr)(v_1 \otimes v_2)
                + T_\delta( (G_2 \otimes G_2) q)(v_1 \otimes v_2) \\
            &\quad + T_\delta( (G_1(m) \otimes G_2) q)(v_1 \otimes v_2) as
                + T_\delta( (G_2 \otimes G_1(m)) q)(v_1 \otimes v_2) \\
            &\quad - \hip{\bbE X_0}{v_1(0)} \hip{\bbE X_0}{v_2(0)}
                 + T_\delta( (G_1(m) \otimes G_1(m)) q)(v_1 \otimes v_2) \\
            &= R_{0,0}\bigl(\bbM^{(2)} X_0\bigr)(v_1 \otimes v_2)
                - R_{0,0} (\bbE X_0 \otimes \bbE X_0)(v_1 \otimes v_2) \\
            &\quad + T_\delta( (G(m) \otimes G(m)) q)(v_1 \otimes v_2).
    \end{align*}
    Hence,
    \[
        \cBpi{\cov(X)}{v_1 \otimes v_2} = g(v_1 \otimes v_2) \quad \forall v_1, v_2 \in \cY
    \]
    and this observation completes the proof, since the subset
    $\operatorname{span}\{v_1 \otimes v_2 : v_1, v_2 \in\cY\} \subset \cY^{(\varepsilon)}$
    is dense and 
    well-posedness of~\eqref{eq:covariance} follows from the existence of
    the mild solution $X$ to \eqref{eq:spdeall} as well as its covariance $\cov(X)\in\cX^{(\pi)}$
    and Theorem~\ref{thm:wellposed}.
\end{proof}

\begin{remark}
    Theorem~\ref{thm:covariance} shows that, if only the covariance of the mild solution
    to the stochastic partial differential equation~\eqref{eq:spdeall}
    needs to be computed, then one can do this by solving
    sequentially two deterministic variational problems:
    first, the more or less standard parabolic problem~\eqref{eq:mvar} for the mean function
    and afterwards problem~\eqref{eq:covariance} for the covariance,
    which is posed on non-reflexive tensor product spaces.
%
\end{remark}

\bibliographystyle{siam}
\bibliography{Levy_bib}

\end{document}